\numberwithin{equation}{section}
\title{Parallelizing Spectral Algorithms for Kernel Learning}
\author{Gilles Blanchard
        \and Nicole M\"{u}cke \\
        }
\address{Institute of Mathematics, University of Potsdam, Karl-Liebknecht-Straße 24-25 14476 Potsdam, Germany}
\email{\{blanchard,nmuecke\}@uni-potsdam.de}
\date{\today}
\theoremstyle{plain}
\newtheorem{theo}{Theorem}[section]
\newtheorem{lem}[theo]{Lemma}
\newtheorem{prop}[theo]{Proposition}
\newtheorem{cor}[theo]{Corollary}
\theoremstyle{definition}
\newtheorem{defi}[theo]{Definition}
\theoremstyle{remark}
\newtheorem{example}[theo]{Example}
\newcommand{\cal}{\mathcal}
\newcommand{\E}{{\mathbb{E}}}
\newcommand{\R}{{\mathbb{R}}}
\newcommand{\N}{{\mathbb{N}}}
\newcommand{\lam}{\lambda}
\newcommand{\h}{{\cal H}_K}
\newcommand{\hhh}{{\cal H}}
\newcommand{\X}{{\cal X}}
\newcommand{\prf}{\begin{proof}} 
\newcommand{\prfend}{\end{proof}} 
\newcommand{\Y}{{\cal Y}}
\newcommand{\la}{\langle}
\newcommand{\ra}{\rangle}
\newcommand{\x}{{\bf x}}
\newcommand{\y}{{\bf y}}
\newcommand{\z}{{\bf z}}
\newcommand{\M}{{\cal M}}
\newcommand{\B}{{\cal B}}
\newcommand{\A}{{\cal A}}
\newcommand{\NN}{{\cal N}}
\newcommand{\PPP}{{\cal P}}
\newcommand{\eps}{\varepsilon}
\newcommand{\etainv}{\eta^{-1}}
\newcommand{\sumup}{\frac{1}{m}\sum_{j=1}^m}
\newcommand{\eigv}{\mu}
\newcommand{\Ran}{\mathrm{Im}}
\newcommand{\nux}{\nu}
\newcommand{\fo}{f_{\rho}}
\newcommand{\fest}{\widehat{f}_n}
\newcommand{\priorle}{\PPP^<}
\newcommand{\hs}{{\mathrm{HS}}}
\newcommand{\tr}{\mathrm{tr}}
\newcommand{\paren}[1]{\left(#1\right)}
\newcommand{\inner}[1]{\left\langle#1\right\rangle}
\newcommand{\norm}[1]{\left\|#1\right\|}
\newcommand{\snorm}[1]{\left\| B^s(#1)\right\|}
\newcommand{\abs}[1]{\left\lvert #1 \right\rvert}
\newcounter{nbdrafts}
\newcommand{\checknbdrafts}{
\ifnum \thenbdrafts > 0
\@latex@warning@no@line{**********************************************************************}
\@latex@warning@no@line{* The document contains \thenbdrafts \space draft note(s)}
\@latex@warning@no@line{**********************************************************************}
\fi}
\newcommand{\beq}{\begin{equation}}
\newcommand{\eeq}{\end{equation}}
\newcommand{\Err}{\mbox{Err}}
\begin{document}


\begin{abstract}
We consider a 
distributed learning approach in supervised learning for a large class of spectral regularization methods in an RKHS framework. 
The data set of size n is partitioned into 
$m=O(n^\alpha)$ disjoint subsets. On each subset, some spectral regularization method (belonging to a large class, including in particular Kernel Ridge Regression, $L^2$-boosting and spectral cut-off) is applied. The regression function $f$ is then estimated via simple averaging, leading to a substantial reduction in computation time. We show that minimax optimal rates of convergence are preserved if m grows sufficiently slowly (corresponding to an upper bound for $\alpha$) as $n \to \infty$, depending 
on the smoothness assumptions on $f$ and the intrinsic dimensionality. In spirit, our approach is classical.   
\end{abstract}

\maketitle

\section{Introduction}

Distributed learning (DL) algorithms are a standard tool for saving computation time in machine learning problems where massive datasets are involved: 
Dividing randomly  data of cardinality $n$ into $m$ equally-sized , easy manageable partitions and evaluating them in parallel roughly gains a factor  $m^{-2}$ (for time and memory) compared to the single machine approach. The final output is obtained from averaging the individual outputs\footnote{For the sake of simplicity, throughout this paper we assume that $n$ is divisible by $m$. This could always be achieved be disregarding some data; alternatively, it is straightforward to show that admitting one smaller block in the partition does not affect the asymptotic results of this paper. We shall not try to discuss this point in greater detail. In particular, we shall not analyze in which general framework our simple averages could be replaced by weighted averages.}. 
\\
Recently, DL was studied in several machine learning contexts: in point estimation \cite{Li13}, matrix factorization \cite{Jordan11},  smoothing spline models and testing \cite{ChengShang16}, local average regression \cite{Chang16}, in classification (kernel SVMs \cite{HsiehSiD13} and feature space decomposition \cite{Guo15}) and also in kernel (ridge) regression (KRR) \cite{ZhangDuchWai15}, \cite{Zhou15}, \cite{XuZhangLi15}. 
\\
In this paper, we study the DL approach for the statistical learning problem 
\begin{equation}
  \label{eq:learnmodel}
  Y_i:=f(X_j) + \eps_i \; , j=1\,,\ldots,n\,,
  \end{equation}
at random i.i.d. data points $X_1,\ldots,X_n$\, drawn according to a probability distribution $\nux$ on $\X$, where $\eps_j$ are independent centered noise variables. The unknown regression function $f$ is real-valued and belongs to some reproducing kernel Hilbert space with bounded kernel $K$. 
We partition the given data set $D=\{(X_1, Y_1), ..., (X_n, Y_n)\} \subset \X \times \R$ into $m$ disjoint equal-size subsets $D_1, ..., D_m$. On 
each subset $D_j$, we compute a local estimator $\hat f^{\lam}_{D_j}$, using a spectral regularization method. The final estimator for the target function $f$ is obtained by simple averaging: $\bar f^{\lam}_{D}: = \sumup \hat f^{\lam}_{D_j}$. 
\\
The non-distributed setting (m=1) has been studied in the recent paper \cite{BlaMuc16}\;, building the root position of our results in the distributed setting, where (weak and strong) minimax optimal rates of convergence are established.  Our aim is to extend these results to distributed learning and to derive minimax optimal rates. We again apply a fairly large class of spectral regularization methods, including the popular KRR, $L^2$-boosting and spectral cut-off. As in \cite{BlaMuc16}\;, we let 
$T: f \in \hhh_K \mapsto \int f(x) K(x,\cdot) d\nux(x) \in \hhh_K$
denote the kernel integral operator associated to $K$ and the sampling measure $\nux$. Our rates of convergence
are governed by a {\em source condition} assumption on $f$ of the form
$\norm{T^{-r} f} \leq R$ for some constants $r,R>0$ as well as by the {\em ill-posedness} of the problem,
as measured by an assumed power decay of the eigenvalues of $T$ with exponent $b>1$\,. 
We show, that for 
$s \in [0,\frac{1}{2}]$ in the sense of p-th moment expectation
\begin{equation}
\label{eq:result}
\norm{T^s(f -\bar f^{\lam_n}_{D})}_{\h}  \lesssim R \paren{\frac{\sigma^2}{R^2n}}^{\frac{(r+s)}{2r+1+1/b}}\,,
\end{equation}
for an appropriate choice of the regularization parameter $\lam_n$\,, depending on the global sample size $n$ as well as on $R$ and 
the noise variance $\sigma^2$ (but not on the number $m$ of subsample sets).   
Note that $s=0$ corresponds to the reconstruction error (i.e. $\h$- norm), and
$s=\frac{1}{2}$ to the prediction error (i.e., $L^2(\nux)$ norm).  
The symbol $\lesssim$ means that the inequality holds up to a multiplicative constant that can depend on various
parameters entering in the assumptions of the result, but not on $n$, $m$, $\sigma$, nor $R$\,. 
An important assumption is that the inequality $q \geq r+s$ should hold, where $q$ is the
{\em qualification} of the regularization method, a quantity defined in the classical theory
of inverse problems (see Section~\ref{se:regulariz} for a precise definition)\,. 
Basic problems are the choice of the regularization parameter on the subsamples and, most importantly, the proper choice of $m$,
since it is well known that choosing $m$ too large gives a suboptimal convergence rate in the limit $n \to \infty$, see e.g. \cite{XuZhangLi15}.

Our approach to this problem is classical. Using a bias-variance decomposition and choosing the regularization parameter according to the total 
sample size $n$  yields undersmoothing on each of the $m$ individual samples. The bias estimate is then straightforward. For the hard part we write the variance as a sum of independent random variables, leading to a substantial reduction of variance by averaging.  

To the best of our knowledge, comparable results up to completion of this article had been  restricted to KRR, corresponding to Tikhonov regularization.  
In \cite{ZhangDuchWai15} the authors derive Minimax-optimal rates in 3 cases
(finite rank kernels, sub- Gaussian decay of eigenvalues of the kernel and polynomial decay), provided m satisfies a certain upper bound, depending on the rate of decay of the eigenvalues and an additional crucial upper bound on the eigenfunctions $\phi_j$ of the Mercer kernel (see Section \ref{discussion}).  
It is therefore of great interest to investigate if and how $m$ can be allowed to go to infinity as a function of $n$ without imposing any conditions on the eigenfunctions of the kernel. 
Results in this direction have been obtained in the recent paper \cite{Zhou15}, for KRR, which is a great improvement on the worst rate of \cite{ZhangDuchWai15}. The authors dub their approach {\em a second order decomposition}, which uses concentration inequalities and certain resolvent identities adapted to KRR. 
After this paper had been completed, however, we learned of the Oberwolfach report \cite{Zhou_oberwolfach}, where the authors have reported results for general spectral regularization methods, which are similar to the results in this paper. At the time of writing, we are not aware of any published proof. 
It is unclear to us how the authors of \cite{Zhou_oberwolfach} prove their results. 
They require bounded output space, a continuous kernel (ours need only be bounded)and their estimates are only in $L^2-$ sense, not in RKHS-norm. 
Furthermore, they do not seem to track the dependence on the noise variance and the source condition as precisely as we do. For more detail, we refer to our Discussion in Section 4\;.

The outline of the paper is as follows.  Section 2 contains notation and the setting.  Section 3 states our main result on distributed learning. Section 4 presents 
numerical studies, followed by a concluding discussion and a more detailed comparison of our results in Section 5. 
In Section 6 we prove our theorems.
  

\section{Notation, Statistical model and distributed learning Algorithm}
\label{se:notation}

In this section, we specify the mathematical background and the statistical model for (distributed) regularized learning. 
We have included this section for self sufficiency and reader convenience. It essentially repeats the setting in \cite{BlaMuc16} in summarized form.  

\subsection{Kernel-induced operators}

We assume that the input space $\X$ is a standard Borel space 
endowed with a probability measure $\nux$\;, the output space is equal to $\R$.  
We let $K$ be a positive semidefinite kernel on $\X \times \X$ which is bounded by $\kappa$. The associated reproducing kernel Hilbert space will be denoted by $\h$.
It is assumed that all functions $f \in \h$ are measurable and bounded in
supremum norm, i.e. $\norm{f}_\infty \leq \kappa \norm{f}_{\h}$ for all
$f \in \h$. Therefore, $\h$ is a subset of $L^2(\X,\nux)$\,, with $S: \h  \longrightarrow L^2(\X, \nux)$ being the inclusion operator, satisfying  $\norm{S}\leq \kappa$\,.
The adjoint operator $S^{*}: L^2(\X, \nux)\longrightarrow \h $ is identified as  
\[ S^{*}g=\E_{\sim \nu}[g(X)K_{X}] = \int_{\X} g(x)K_x\;\nu(dx)  \;. \] 
Setting $T_x=K_{x}\otimes K_x^{*} :\h\longrightarrow \h $,  the covariance operator 
is given by 
\[T = \E_{\sim \nu} [K_{X}\otimes K_X^{*}] =  \int_{\X} \la \cdot , K_x \ra_{\h}   K_x\;\nu(dx)\; , \] 
which can be shown to be positive self-adjoint trace class (and hence is compact). 
The corresponding empirical versions of these operators are given by 
\[S_{\x}: \h \longrightarrow \R^n \;, \qquad (S_{\x}f)_j= \la f, K_{x_j} \ra_{\h} \;, \]
\[S^*_{\x}:\R^n   \longrightarrow \h  \;, \qquad S_{\x}^{*}\y = \frac{1}{n}\sum_{j=1}^n y_j K_{x_j}\,, \]
\[T_{\x}:= S^*_{\x}S_{\x}: \h \longrightarrow \h \; , \qquad T_{\x} =\frac{1}{n}\sum_{j=1}^n  K_{x_j} \otimes  K_{x_j}^{*}  \; . \] 
We introduce the shortcut notation $\bar T = \kappa^{-2} T$ and $\bar T_{\x} := \kappa^{-2} T_{\x}$\;, ensuring $||\bar T|| \leq 1$ and  $||\bar T_x|| \leq 1$.   Similarly, 
$\bar S = \kappa^{-1} S$ and $ \bar S_{\x_j} := \kappa^{-2} S_{\x_j}$\,, ensuring $||\bar S|| \leq 1$ and $||\bar S_x|| \leq 1$.  
The numbers $\eigv_j$ are the positive eigenvalues of $\bar T$ satisfying 
$0< \eigv_{j+1} \leq \eigv_j$ for all $j>0$\, and $\eigv_j \searrow 0$.

\subsection{Noise assumption and prior classes}
In our setting of kernel learning, the sampling is assumed to be
random i.i.d., where each observation point $(X_i,Y_i)$ follows the model
$ Y = f(X) + \eps \,.$
For $(X,Y)$ having distribution $\rho$,
we assume: The conditional expectation wrt. $\rho$ of $Y$ given $X$ exists and it holds
for $\nux$-almost all $x \in X$\,:
\begin{equation}
  \label{basicmodeleq}
  \E_\rho[Y | X=x] = \fo(x) \,, \text{ for some } \fo \in \h\,.
\end{equation}
Furthermore, we will make the following assumption on the 
observation noise distribution: There exists $\sigma > 0$ such that 
\begin{equation}
\label{bernstein}
\E[\; \abs{Y - \fo(X)}^{2} \; | \; X \;] \leq  \sigma^2  \quad \nux - {\rm a.s.} \;.
\end{equation}
To derive nontrivial rates of convergence, we concentrate
our attention on specific subsets (also called {\em models})  of the class of probability measures.  
If $\PPP$ denotes the set of all probability distributions on $\X$, 
we define classes of sampling distributions by introducing decay conditions on
the eigenvalues $\eigv_i$ of the operator $T_\nux$. 
For $b>1$ and $\beta >0$\,, we set
\[ \priorle(b, \beta):=\{ \nu \in  {\PPP}:  \; \eigv_j \leq \beta/j^b \; \; \forall j \geq 1 \}  \; , \]
For a subset $\Omega \subseteq \h$, we let ${\cal K}(\Omega)$ be the set of regular conditional probability distributions
$\rho(\cdot|\cdot)$ on ${\cal B}(\R)\times \X$ such that
$(\ref{basicmodeleq})$ and $(\ref{bernstein})$ hold for some $\fo \in \Omega$.
We will focus on a {\it H\"older-type source condition}, i.e. 
given $r>0, R>0$ and $\nu \in {\cal P}$, we define
\begin{equation}
\label{sourceset}
\Omega_{\nux}(r,R) := \{f \in \h : f = \bar T_{\nux}^rh,\; \norm{h}_{\h}\leq 
R   \}.
\end{equation}
Then the class of models which we will consider will be defined as
\begin{equation}
\label{measureclass}
 \M(r,R,\PPP') \; := \; \{ \; \rho(dx,dy)=\rho(dy|x)\nux(dx)\; : \; 
\rho(\cdot|\cdot)\in {\cal K}(\Omega_{\nux}(r,R)), \; \nux \in  \PPP' \;\} \;,
\end{equation}
with $\PPP'=\priorle(b,\beta)$. 
As a consequence, the class of models depends not only on the smoothness properties of the solution (reflected in the parameters $R>0, \; r>0$), 
but also essentially on the decay of the eigenvalues of $\bar T_\nux$.



\subsection{Regularization}
\label{se:regulariz}

In this subsection, we introduce the class of linear regularization methods based on spectral theory for 
self-adjoint linear operators. These are standard methods for finding stable solutions for ill-posed inverse 
problems.  Originally, these methods were developed in the deterministic context, see  \cite{engl}. Later on, they have been applied
to probabilistic problems in machine learning, see  \cite{rosasco} or \cite{BlaMuc16}.

\begin{defi}[\protect{\bf Regularization function}]
\label{regudef}
Let $g: (0,1]\times [0, 1] \longrightarrow \R$ be a function and write 
$g_{\lam}=g(\lam, \cdot)$. The family $\{g_{\lam}\}_{\lam}$ is called 
{\it regularization function}, if the following conditions hold: 
\begin{enumerate}
\item[(i)]
There exists a constant $D'<\infty$ such that for any $0 < \lam \leq 1$
\begin{equation*}
\sup_{0<t\leq 1}|tg_{\lam}(t)| \leq D'\;.
\end{equation*}

\item[(ii)]
There exists a constant $E<\infty$ such that for any $0 < \lam \leq 1$
\begin{equation}
  \label{eq:supg}
\sup_{0<t\leq 1}|g_{\lam}(t)| \leq \frac{E}{\lam}\;.
\end{equation}

\item[(iii)]
Defining the {\em residual} $r_{\lam}(t):= 1-g_{\lam}(t)t$\,, there exists a constant $\gamma_0 <\infty$ such that for any $0 < \lam \leq 1$
\begin{equation*}
\sup_{0<t\leq 1}|r_{\lam}(t)| \leq \gamma_0 \;.
\end{equation*}
\end{enumerate}
\end{defi}

It has been shown in e.g. \cite{DicFosHsu15}, \cite{BlaMuc16} that attainable learning rates are essentially linked with the qualification of the regularization $\{g_{\lam}\}_{\lam}$, being the maximal $q$ such that for any $0<\lam\leq 1$
\begin{equation}
\label{eq:quali}
\sup_{0<t\leq 1}|r_{\lam}(t)|t^{q} \leq \gamma_{q}\lam^{q}.
\end{equation}
for some constant $\gamma_q>0$\,. The most popular examples include: 


\begin{example} (Tikhonov Regularization, Kernel Ridge Regression)  
\label{tikhexample}
The choice $g_{\lam}(t) = \frac{1}{\lam + t}$ corresponds to {\it Tikhonov regularization}. In this case we have $D'=E=\gamma_0 =1$. 
The qualification of this method is $q =1$ with $\gamma_{q} = 1$.
\end{example}

\begin{example}(Landweber Iteration, gradient descent )  
\label{landweber}
The {\it Landweber Iteration} (gradient descent algorithm with constant stepsize) is defined by
\[ g_{k}(t) = \sum_{j=0}^{k - 1}(1-t)^j \, \mbox{ with $k=1/\lam$ $\in \N$} \; . \]
We have $D'=E=\gamma_0 = 1$. The qualification $q$ of this algorithm can be arbitrary with $\gamma_q=1$ if $0<q\leq 1$ and $\gamma_q=q^q$ if $q>1$. 
\end{example}

\begin{example}{\bf ($\nu$- method) } 
\label{ex:numethod}
The {\it $\nu-$ method} belongs to the class of so called {\it semi-iterative regularization methods}. This method has finite qualification $q=\nu$ 
with $\gamma_q$ a positive constant. Moreover, $D=1$ and $E=2$. The filter is given by $g_k(t) = p_k(t)$, a polynomial of degree $k-1$, with regularization parameter $\lam \sim k^{-2}$, which makes this method much faster as e.g. gradient descent. 
\end{example}



\subsection{Distributed Learning Algorithm}
\label{sec:dist_learn_algo}

We let  $D=\{(x_j, y_j)\}_{j=1}^n \subset \X \times \Y$ be the dataset, which we partition into $m$ disjoint subsets $D_1, ..., D_m$, each having 
size $\frac{n}{m}$.  
Denote the  $jth$  data vector by $(\x_j, \y_j) \in (\X \times \R)^{\frac{n}{m}}$.   
On each subset we compute a local estimator for a suitable a-priori parameter choice $\lam = \lam_n$ according to
\begin{equation}
\label{def:subset_estimator}
f_{D_j}^{\lam_n} :=g_{\lam_n}(\kappa^{-2}T_{\x_j})\kappa^{-2}S^{\star}_{\x_j}\y_j  = 
g_{\lam_n}(\bar T_{\x_j})\bar S^{\star}_{\x_j}\y_j\; .
\end{equation} 
By $f_D^{\lam}$ we will denote the estimator using the whole sample $m=1$\;. 
The final estimator is given by simple averaging the local ones:
\begin{equation}
\label{def:estimator}
\bar f^{\lam}_{D}:= \frac{1}{m}\sum_{j=1}^m f_{D_j}^\lam \; .
\end{equation}



\section{Main Results}


This section presents our main results. Theorem \ref{prop:approx_error} and Theorem \ref{prop:sample_error_exp} contain separate estimates on
the approximation error and the sample error and lead to
Corollary \ref{maintheo:total_averaged} which gives an upper bound for the error $\norm{\bar T^s (\fo - \bar f_{D}^{\lam})}_{\h}$ 
and presents an upper rate of convergence for the sequence of distributed learning algorithms. 

For the sake of the reader we recall Theorem \ref{theo:BlaMuc16}, which was already shown in \cite{BlaMuc16}, presenting the minimax optimal rate for the single machine problem. This yields an estimate on the difference between the single machine and the distributed learning algorithm in Corollary  \ref{cor:rest}.
  
We want to track the precise behavior of these rates not only for what concerns the
exponent in the number of examples $n$, but also in terms of their scaling
(multiplicative constant)
as a function of some important parameters (namely the noise variance $\sigma^2$ and the complexity radius $R$ in the source condition).
For this reason, we introduce a notion of a family of rates over a family of models.
More precisely, 
we consider an indexed family
$(\M_\theta)_{\theta \in \Theta}$\,, where for all $\theta \in \Theta$\,, $\M_\theta$ is a class of Borel probability distributions
on $\X \times \R$ satisfying the basic general assumptions \ref{basicmodeleq} and \eqref{bernstein}.
We consider rates of convergence in the sense of the $p$-th moments of the estimation error, where $1 \leq p  < \infty$ is a fixed real number.


As already mentioned in the Introduction, our proofs are based on a classical bias-variance decomposition as follows: Introducing
\begin{equation}
\label{def:tildef}
\tilde f^{\lam}_D =\sumup g_{\lam}(\bar T_{\x_j})\bar T_{\x_j}\fo \;, 
\end{equation}
we write
\begin{align}
\label{eq:decompo}
\bar T^s( \fo - \bar f_D^{\lam} )&= \bar T^s(\;\fo - \tilde f^{\lam}_D \;) +  \bar T^s( \;\tilde f^{\lam}_D - \bar f_D^{\lam} \; )  \nonumber \\
                              &=   \underbrace{\sumup \bar T^sr_{\lam}( \bar T_{\x_j})\fo}_{Approximation \; Error} + 
   \underbrace{\sumup \bar T^sg_{\lam}(\bar T_{\x_j})(\bar T_{\x_j}\fo- S^{*}_{ \x_j }\y_j )}_{ Sample \; Error} \;. \\
   & \nonumber
\end{align}




In all the forthcoming results in this section, we let $s \in [0,\frac{1}{2}]$, $p\geq 1$ and consider the model $\M_{\sigma,M,R}:={\M}(r,R, {\priorle}(b, \beta))$\,
where $r>0$, $b>1$ and $\beta>0$ are fixed, and $\theta=(R,M,\sigma)$ varies in $\Theta=\R^3_+$.
Given a sample $D \subset (\X \times \R)$ of size $n$, define $\bar f_{D}^{\lam_n}$, $f_{D}^{\lam_n}$ as in Section \ref{sec:dist_learn_algo} and $ \tilde f^{\lam_n}_D$ as in \eqref{def:tildef},  
using a regularization function of qualification $q\geq r+s$, with  parameter sequence 
\begin{equation}
  \label{eq:choicelam:averaged}
   \lam_n:=\lam_{n,(\sigma,R)} := \min\paren{ \paren{ \frac{\sigma^2}{R^2 n}}^{\frac{b}{2br+b + 1}},1} \; ,
\end{equation}
independent on $M$. Define the sequence 
\begin{equation}
\label{def:rate}
 a_n:= a_{n,(\sigma,R)}:=  R \paren{\frac{\sigma^2}{R^2n}}^{\frac{b(r+s)}{2br+b+1}} \;. 
\end{equation} 

We recall from the introduction that we shall always assume that $n$ is a multiple of  $m$.
With these preparations, our main results are:



\begin{theo}[Approximation Error]
\label{prop:approx_error}
If the number $m$ of subsample sets satisfies 
\begin{equation}
\label{cond:m_alpha2}
m\leq n^{\alpha} \;, \quad \alpha < \frac{2b\min\{ r,  1 \}}{2br+b+1}   \;,
\end{equation}
Then
\[
   \sup_{(\sigma,M,R) \in \R_+^3}  \limsup_{n \rightarrow \infty} \sup_{\rho \in \mathcal{M}_{\sigma,M,R}} \frac{   \E_{\rho^{\otimes n}}
     \Big[\big\|\bar T^s( \fo - \tilde f_{D}^{\lam_{n}})\big\|^p_{\h}\Big]^{\frac{1}{p}}}{a_{n}} < \infty   \,.
\]
\end{theo}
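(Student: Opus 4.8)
The plan is to control the approximation error term
$\sumup \bar T^s r_{\lam_n}(\bar T_{\x_j})\fo$ by dealing with a single block first and then invoking the triangle inequality over the $m$ blocks (which gains nothing but loses nothing either, since each block has the same distribution). Fix one block $D_j$ of size $n/m$; by symmetry it suffices to bound $\E_{\rho^{\otimes (n/m)}}\big[\|\bar T^s r_{\lam_n}(\bar T_{\x})\fo\|^p_{\h}\big]^{1/p}$ for a sample $\x$ of size $n/m$, and then $\sup_{\rho}\E[\|\bar T^s(\fo - \tilde f^{\lam_n}_D)\|^p]^{1/p}$ is at most this single-block bound. Using the source condition $\fo = \bar T^r_{\nux} h$ with $\|h\|\le R$, the quantity to estimate becomes $\|\bar T^s r_{\lam_n}(\bar T_{\x})\bar T^r h\|_{\h}$.

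The key algebraic step is to pass from powers of the population operator $\bar T$ to powers of the empirical operator $\bar T_{\x}$ (on which the residual $r_{\lam_n}$ acts directly, so that qualification applies). First I would insert the regularized operator $\bar T_\lam := \bar T + \lam$ and $\bar T_{\x,\lam} := \bar T_{\x} + \lam$ and write
\[
\bar T^s r_{\lam_n}(\bar T_{\x})\bar T^r
 = \bar T^s \bar T_\lam^{-s}\,\cdot\, \bar T_\lam^{s} \bar T_{\x,\lam}^{-s}\,\cdot\, \bar T_{\x,\lam}^{s} r_{\lam_n}(\bar T_{\x}) \bar T_{\x,\lam}^{r}\,\cdot\, \bar T_{\x,\lam}^{-r}\bar T_\lam^{r}\,\cdot\,\bar T_\lam^{-r}\bar T^{r}\,.
\]
The outer factors $\|\bar T^s\bar T_\lam^{-s}\|$ and $\|\bar T_\lam^{-r}\bar T^r\|$ are bounded by $1$ (for $s,r\le 1$; for $r>1$ one splits off an integer part and uses operator monotonicity arguments, which is exactly where the $\min\{r,1\}$ in \eqref{cond:m_alpha2} enters). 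The middle factor is controlled by qualification: since the method has qualification $q\ge r+s$, condition \eqref{eq:quali} gives $\|\bar T_{\x,\lam}^{s} r_{\lam_n}(\bar T_{\x}) \bar T_{\x,\lam}^{r}\| \le \|r_{\lam_n}(\bar T_{\x})\bar T_{\x}^{r+s}\| + (\text{lower order}) \lesssim \lam_n^{r+s}$, combining \eqref{eq:quali} with $D'$ and $\gamma_0$. The remaining factors $\|\bar T_\lam^{s}\bar T_{\x,\lam}^{-s}\|$ and $\|\bar T_{\x,\lam}^{-r}\bar T_\lam^{r}\|$ are the genuinely probabilistic pieces: they are governed by the relative perturbation $\|\bar T_\lam^{-1/2}(\bar T - \bar T_{\x})\bar T_\lam^{-1/2}\|$, which by a standard Bernstein-type concentration inequality for the self-adjoint operator $\bar T_{\x}$ (as in \cite{BlaMuc16,rosasco}) is small with high probability once $n/m \gtrsim \lam_n^{-1}\log(\cdot)$ — equivalently $(n/m)\lam_n \to \infty$. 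This is precisely the content of the constraint $\alpha < 2b\min\{r,1\}/(2br+b+1)$: plugging in the value of $\lam_n$ from \eqref{eq:choicelam:averaged} one checks that $n^{\alpha}\lam_n^{\min\{r,1\}/(r+s)\cdot(\ldots)} \to 0$, guaranteeing the perturbation factors stay $O(1)$ in the relevant $p$-th moment.

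Assembling the pieces, the single-block $p$-th moment is $\lesssim R\,\lam_n^{r+s}$ up to the (bounded) concentration factors, and by the choice \eqref{eq:choicelam:averaged} one has $\lam_n^{r+s} \asymp (\sigma^2/(R^2 n))^{b(r+s)/(2br+b+1)}$, so $R\lam_n^{r+s} \asymp a_n$; dividing by $a_n$ and taking $\limsup_{n\to\infty}$ then $\sup$ over $\theta=(\sigma,M,R)$ gives a finite bound, uniformly because the concentration estimate does not see $\sigma,M$ (the source condition radius $R$ has already been factored out and the remaining operator bounds are scale-free). The main obstacle I anticipate is handling the case $r>1$ cleanly: the naive bound $\|\bar T^r\bar T_\lam^{-r}\|\le 1$ fails, and one must instead exploit that the qualification $q\ge r+s$ lets the residual absorb $r+s$ full powers of $\bar T_{\x}$ while only needing to transfer $\min\{r,1\}$ powers across the $\bar T\leftrightarrow\bar T_{\x}$ gap — this is the reason the admissible exponent $\alpha$ saturates at $r=1$ and explains the $\min\{r,1\}$ in the hypothesis. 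A secondary technical point is controlling the $p$-th moment (not just a high-probability bound) of the concentration factors, which is done by integrating the tail of the Bernstein bound, contributing only $\log$ factors that are killed by the strict inequality in \eqref{cond:m_alpha2}.
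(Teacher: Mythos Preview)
Your plan coincides with the paper's approach (triangle inequality over blocks, source condition, resolvent sandwich, qualification, concentration, then integration of the tail to get $p$-th moments), and for $r\le 1$ it is essentially complete. Two technical points need sharpening.

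First, the perturbation factors are governed by the quantity $\B_{n/m}(\lam)=1+\bigl(\tfrac{2m}{n\lam}+\sqrt{m\NN(\lam)/(n\lam)}\bigr)^2$ of Proposition~\ref{prop:Guo}, which involves the effective dimension $\NN(\lam)$; its boundedness requires $m\NN(\lam_n)/(n\lam_n)=O(1)$, i.e.\ $\alpha<2br/(2br+b+1)$ under the polynomial eigenvalue decay --- \emph{not} merely $(n/m)\lam_n\to\infty$ as you wrote (that is strictly weaker and would not recover the stated hypothesis).

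Second, for $r>1$ the factor that actually fails is not $\|\bar T_\lam^{-r}\bar T^r\|$ (this is $\le 1$ by spectral calculus for all $r>0$) but the cross term $\|\bar T_{\x,\lam}^{-r}\bar T_\lam^r\|$, to which Cordes does not apply. The paper does not resolve this by ``operator monotonicity'' but by the telescoping identity
\[
\bar T^k=\sum_{l=0}^{k-1}\bar T_{\x}^l(\bar T-\bar T_{\x})\bar T^{k-l-1}+\bar T_{\x}^{k}\,,\qquad k=\lfloor r\rfloor\,,
\]
which replaces the problematic cross factor by an explicit $(\bar T-\bar T_{\x})$ in each remainder summand. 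After applying qualification and the concentration bound $\|(\bar T+\lam)^{-1}(\bar T-\bar T_{\x})\|$, this yields an additional contribution of order $R\,\lam_n^{s+1}\bigl(\tfrac{2m}{n\lam_n}+\sqrt{m\NN(\lam_n)/(n\lam_n)}\bigr)$, and the requirement that this extra term be $\lesssim R\lam_n^{s+r}=a_n$ is precisely what forces $\alpha<2b/(2br+b+1)$ when $r>1$. So the $\min\{r,1\}$ originates from this residual difference term, not from a cap on how many powers one can ``transfer across the gap''.
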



\begin{theo}[Sample Error]
\label{prop:sample_error_exp}
If the number $m$ of subsample sets satisfies 
\begin{equation}
m\leq n^{\alpha} \;, \quad \alpha < \frac{2br}{2br+b+1}   \;,
\end{equation}
Then
\[
    \sup_{(\sigma,M,R) \in \R_+^3} \limsup_{n \rightarrow \infty} \sup_{\rho \in \mathcal{M}_{\sigma,M,R}} \frac{   \E_{\rho^{\otimes n}}
     \Big[\big\| \bar T^s( \tilde f^{\lam_n}_D - \bar f_D^{\lam_n} )   \big\|^p_{\h}\Big]^{\frac{1}{p}}}{a_{n}}  < \infty  \,.
\]
\end{theo}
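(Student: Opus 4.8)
The plan is to exploit the fact that the sample error term, unlike the approximation error, genuinely splits into a sum of i.i.d.\ contributions, one per block, and that averaging $m$ of them produces a variance reduction. Write
\[
\bar T^s(\tilde f^{\lam_n}_D - \bar f_D^{\lam_n}) = \frac{1}{m}\sum_{j=1}^m \xi_j\,,
\qquad \xi_j := \bar T^s g_{\lam_n}(\bar T_{\x_j})\paren{\bar T_{\x_j}\fo - \bar S^{\star}_{\x_j}\y_j}\,,
\]
where the $\xi_j$ are i.i.d.\ $\h$-valued random variables, each depending only on the $j$-th block of size $n/m$. Conditionally on the design $\x_j$, the noise enters linearly through $\bar S^{\star}_{\x_j}\y_j - \bar T_{\x_j}\fo = \kappa^{-2}\frac{1}{n/m}\sum_{i}(\fo(x_i)-y_i)K_{x_i}$, which is centered; hence $\E[\xi_j] = 0$ up to the design-conditional bias, and more precisely $\E[\xi_j]$ is exactly the (single-block, size $n/m$) expected sample error, which is small because $\lam_n$ is chosen according to the \emph{total} size $n$, i.e.\ undersmoothing relative to $n/m$. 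The strategy is therefore the classical decomposition
\[
\E\norm{\tfrac{1}{m}\sum_j \xi_j}^p \lesssim \norm{\E[\xi_1]}^p + \E\norm{\tfrac{1}{m}\sum_j(\xi_j - \E[\xi_1])}^p\,,
\]
bound the first term by a single-machine sample-error estimate at sample size $n/m$ (which is in $\mathtt{BlaMuc16}$-style controllable and, thanks to $\alpha < 2br/(2br+b+1)$, is still $\lesssim a_n$), and bound the second (the "centered fluctuation" term) by a Rosenthal / Pinelis-type inequality for sums of independent Hilbert-space-valued random variables, which for $p$-th moments gives roughly $m^{-1/2}(\E\norm{\xi_1}^2)^{1/2}$ plus a higher-order term $m^{-1+1/p}(\E\norm{\xi_1}^p)^{1/p}$.

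The technical heart is estimating $(\E\norm{\xi_1}^p)^{1/p}$ and $(\E\norm{\xi_1}^2)^{1/2}$ for a single block of size $N := n/m$. Here I would follow the effective-dimension machinery from $\mathtt{BlaMuc16}$: introduce $T_{\lam} := \bar T + \lam$, $T_{\x,\lam} := \bar T_{\x} + \lam$, write
\[
\xi_1 = \bar T^s T_{\lam}^{-s}\cdot T_{\lam}^s g_{\lam}(\bar T_{\x_1})T_{\x_1,\lam}^{1/2}\cdot T_{\x_1,\lam}^{-1/2}\paren{\bar T_{\x_1}\fo - \bar S^{\star}_{\x_1}\y_1}\,,
\]
and control each factor: the first is $O(1)$ by the standard bound $\norm{\bar T^s T_{\lam}^{-s}}\le 1$; the middle factor is handled by the regularization-function properties (i) and (ii) together with the operator perturbation bound $\norm{T_{\lam}^{1/2}T_{\x,\lam}^{-1/2}}$, which is $O(1)$ on a high-probability event controlled by a Bernstein inequality for $\norm{T_{\lam}^{-1/2}(\bar T - \bar T_{\x})T_{\lam}^{-1/2}}$ (this requires $\lam_n$ not too small relative to $N$, i.e.\ $N\lam_n \gtrsim \log$-factors — this is where the constraint on $\alpha$ is used again, more tightly than in the bias term); and the last noise factor has $\h$-norm whose $p$-th moment is $\lesssim (\sigma^2 \NN(\lam_n)/(N\lam_n) + \sigma^2/(N^{?}))$-type quantities via a Bernstein bound in Hilbert space, where $\NN(\lam) = \tr(\bar T T_{\lam}^{-1}) \lesssim \lam^{-1/b}$ under the eigenvalue decay $\priorle(b,\beta)$. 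Putting these together yields $(\E\norm{\xi_1}^2)^{1/2} \lesssim \sigma\sqrt{\NN(\lam_n)/(N\lam_n)}$ up to logarithmic and lower-order corrections, so that after division by $\sqrt m$ one gets $\lesssim \sigma\sqrt{\NN(\lam_n)/(n\lam_n)}$, which by the choice \eqref{eq:choicelam:averaged} of $\lam_n$ is exactly of order $a_n$; the higher-order Rosenthal term carries extra negative powers of $m$ or $N$ and is absorbed provided $\alpha < 2br/(2br+b+1)$.

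The main obstacle I anticipate is the bookkeeping needed to keep the argument uniform in $\theta = (\sigma, M, R) \in \R_+^3$ while only taking $\limsup_{n\to\infty}$, rather than a fixed-$n$ bound: one must verify that the high-probability events on which the operator perturbation bounds hold have complementary probability decaying fast enough (polynomially in $n$ to any order, by choosing Bernstein confidence levels appropriately) that their contribution to the $p$-th moment — where on the bad event one only has the trivial deterministic bound $\norm{\xi_1}\lesssim \lam_n^{-(1-s)}(\text{stuff})$, which grows polynomially — is still $o(a_n)$. This "bad event" estimate, together with correctly propagating the $R$- and $\sigma$-dependence through every constant so that the final ratio is bounded by a $\theta$-independent constant, is the delicate part; the rest is a fairly mechanical, if lengthy, application of the $\mathtt{BlaMuc16}$ single-machine toolbox at sample size $n/m$ combined with a Hilbert-space Rosenthal inequality.
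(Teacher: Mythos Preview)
Your approach is essentially the paper's: write the sample error as $\tfrac{1}{m}\sum_j \xi_j$ with $\xi_j = \bar T^s g_{\lam}(\bar T_{\x_j})(\bar T_{\x_j}\fo - \bar S^\star_{\x_j}\y_j)$, apply a Hilbert-space Rosenthal/Pinelis inequality, and bound the per-block moments $\E\|\xi_j\|^p$ by the operator factorization $(\bar T_{\x_j}+\lam)^s g_\lam(\bar T_{\x_j})(\bar T_{\x_j}+\lam)^{1/2}$ together with the noise concentration estimate for $(\bar T+\lam)^{-1/2}(\bar T_{\x_j}\fo-\bar S^\star_{\x_j}\y_j)$ and the effective-dimension bound $\NN(\lam)\lesssim\lam^{-1/b}$.

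One simplification you are missing: $\E[\xi_j]=0$ \emph{exactly}. Conditionally on $\x_j$ the noise $\y_j-\bar S_{\x_j}\fo$ is centered and $\xi_j$ is linear in it, so $\E[\xi_j\mid\x_j]=0$; there is no residual ``design-conditional bias'', and the whole $\|\E[\xi_1]\|$-term in your bias/fluctuation split vanishes identically. All the bias has already been pushed into the approximation error by the decomposition~\eqref{eq:decompo}; this is precisely what makes the Rosenthal step clean. On your ``bad event'' worry, the paper avoids the trivial-bound-on-complement issue altogether: the per-block high-probability bound is derived with explicit $\log^{c}(\eta^{-1})$ dependence valid for \emph{every} $\eta\in(0,1]$, and one integrates in $\eta$ to pass directly to $p$-th moments, so no separate deterministic complement estimate is needed.
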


And, as consequence (by \eqref{eq:decompo} and applying the triangle inequality):

\begin{cor}
\label{maintheo:total_averaged}
If the number $m$ of subsample sets satisfies 
\begin{equation}
\label{cond:m_alpha_final}
m\leq n^{\alpha} \;, \quad \alpha < \frac{\min\{ 2br,  b+1 \}}{2br+b+1}   \;,
\end{equation}
then 
the sequence \eqref{def:rate} is an upper rate of convergence in $L^p$, for the interpolation
norm of parameter $s$, for the sequence of estimated solutions $(\bar f_{D}^{\lam_{n,(\sigma,R)}})$ over 
the family of models $(\M_{\sigma,M,R})_{(\sigma,M,R) \in \R_+^3}$\,, i.e.
\[
 \sup_{(\sigma,M,R) \in \R_+^3}  \limsup_{n \rightarrow \infty} \sup_{\rho \in \mathcal{M}_{\sigma,M,R}} \frac{   \E_{\rho^{\otimes n}}
     \Big[\big\|\bar T^s( \fo -  \bar f_{D}^{\lam_{n}})\big\|^p_{\h}\Big]^{\frac{1}{p}}}{a_{n}}  <  \infty   \,.
\]
\end{cor}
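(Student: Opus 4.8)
The plan is to derive Corollary~\ref{maintheo:total_averaged} directly from Theorem~\ref{prop:approx_error} and Theorem~\ref{prop:sample_error_exp} by means of the bias--variance splitting \eqref{eq:decompo} and Minkowski's inequality in $L^p(\rho^{\otimes n})$; no new probabilistic estimate is needed. The only two points that require attention are an elementary comparison of the exponents appearing in the hypotheses, and the correct bookkeeping of the nested $\sup$--$\limsup$--$\sup$.

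First I would check that \eqref{cond:m_alpha_final} is at least as restrictive as each of the hypotheses of the two theorems, so that under \eqref{cond:m_alpha_final} both conclusions hold \emph{simultaneously}, for the same parameter sequence $\lam_n$ of \eqref{eq:choicelam:averaged} and the same rate $a_n$ of \eqref{def:rate}. Since $\min\{2br,b+1\}\leq 2br$, condition \eqref{cond:m_alpha_final} implies the hypothesis of Theorem~\ref{prop:sample_error_exp}. It also implies \eqref{cond:m_alpha2}, since $\min\{2br,b+1\}\leq 2b\min\{r,1\}$: if $r\leq 1$ the right-hand side equals $2br$ and there is nothing to prove, whereas if $r>1$ the right-hand side equals $2b$ and $\min\{2br,b+1\}=b+1\leq 2b$ because $b>1$.

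It then remains to assemble the two bounds. Fix $(\sigma,M,R)\in\R_+^3$ and $\rho\in\M_{\sigma,M,R}$. From \eqref{eq:decompo} and the triangle inequality in $\h$ one gets, pointwise on the sample space,
\[
\norm{\bar T^s(\fo-\bar f_D^{\lam_n})}_{\h}\;\leq\;\norm{\bar T^s(\fo-\tilde f^{\lam_n}_D)}_{\h}+\norm{\bar T^s(\tilde f^{\lam_n}_D-\bar f_D^{\lam_n})}_{\h}\,,
\]
and raising to the power $p\geq 1$, taking $\E_{\rho^{\otimes n}}$ and applying Minkowski's inequality transfers this inequality to the $p$-th moments. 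Dividing by $a_n$ and then successively taking $\sup_{\rho\in\M_{\sigma,M,R}}$, $\limsup_{n\to\infty}$ and $\sup_{(\sigma,M,R)\in\R_+^3}$, while using the subadditivity of $\sup$ and of $\limsup$, bounds the left-hand side of the corollary by the sum of the two finite quantities appearing in Theorem~\ref{prop:approx_error} and Theorem~\ref{prop:sample_error_exp} (finite precisely because, by the previous paragraph, both theorems apply under \eqref{cond:m_alpha_final}). This proves the claim. As anticipated, the ``hard part'' is merely clerical: ensuring that one admissible window for $\alpha$ covers both regimes, and keeping the order of the suprema and the limit superior straight.
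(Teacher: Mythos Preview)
Your proof is correct and follows exactly the approach indicated in the paper, namely combining Theorem~\ref{prop:approx_error} and Theorem~\ref{prop:sample_error_exp} via the decomposition \eqref{eq:decompo} and the triangle (Minkowski) inequality. Your careful verification that \eqref{cond:m_alpha_final} implies both \eqref{cond:m_alpha2} and the hypothesis of Theorem~\ref{prop:sample_error_exp}, together with the bookkeeping for the nested $\sup$--$\limsup$--$\sup$, simply makes explicit what the paper leaves implicit.
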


\begin{theo}[Blanchard, M\"ucke (2017) \cite{BlaMuc16}]
\label{theo:BlaMuc16}
The sequence \eqref{def:rate} 
is an upper rate of convergence in $L^p$ for all $p\geq1$, for the interpolation
norm of parameter $s$, for the sequence of estimated solutions $(f_{D}^{\lam_{n,(\sigma,R)}})$ - independent on $M$ - over 
the family of models $(\M_{\sigma,M,R})_{(\sigma,M,R) \in \R_+^3}$\,, i.e.
\[
   \sup_{(\sigma,M,R) \in \R_+^3} \limsup_{n \rightarrow \infty} \sup_{\rho \in \mathcal{M}_{\sigma,M,R}} \frac{   \E_{\rho^{\otimes n}}
     \Big[\big\| \bar T^s( \fo - f_{D}^{\lam_{n}})\big\|^p_{\h}\Big]^{\frac{1}{p}}}{a_{n}}  <  \infty   \,.
\]
\end{theo}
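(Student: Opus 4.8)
The plan is to reduce everything to the bias--variance splitting \eqref{eq:decompo} taken at $m=1$, to bound the two resulting summands separately in $L^p$ against $a_n$, and to balance them through the parameter choice \eqref{eq:choicelam:averaged}. Although $m=1$ trivially satisfies \eqref{cond:m_alpha_final} (so that, formally, the statement is the special case $m=1$ of Corollary~\ref{maintheo:total_averaged}), the single-machine estimates are logically prior --- the distributed proof rests on them --- so I would establish the result from scratch, as in \cite{BlaMuc16}. For $m=1$ the decomposition reads $\bar T^s(\fo-f_D^{\lam})=\bar T^sr_{\lam}(\bar T_{\x})\fo+\bar T^sg_{\lam}(\bar T_{\x})(\bar T_{\x}\fo-\bar S^{*}_{\x}\y)$, the first term being the approximation error and the second the sample error.

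For the approximation error I would insert the source condition $\fo=\bar T^rh$ with $\norm{h}_{\h}\leq R$ and estimate $\norm{\bar T^sr_{\lam}(\bar T_{\x})\bar T^rh}_{\h}$. The subtlety is that $r_{\lam}$ is evaluated at the \emph{empirical} operator $\bar T_{\x}$ while the fractional powers are powers of the \emph{population} operator $\bar T$; I would reconcile this with the standard ``algebraic'' perturbation lemmas controlling $\norm{(\bar T+\lam)^{a}(\bar T_{\x}+\lam)^{-a}}$ and its inverse for $a\in\{s,r,\tfrac12\}$ on a high-probability event, together with the qualification bound \eqref{eq:quali}, which for $q\geq r+s$ gives $\sup_{0<t\leq 1}|r_{\lam}(t)|\,t^{r+s}\leq\gamma_{r+s}\lam^{r+s}$. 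This yields an approximation bound of order $R\,\lam^{r+s}$.

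For the sample error I would write $\bar T_{\x}\fo-\bar S^{*}_{\x}\y=-\bar S^{*}_{\x}\eps$ with $\eps=(\eps_i)_{i=1}^n$, $\eps_i=y_i-\fo(x_i)$ centred and of conditional second moment at most $\sigma^2$, and then factor
\[
\bar T^sg_{\lam}(\bar T_{\x})\bar S^{*}_{\x}\eps=\Big[\bar T^sg_{\lam}(\bar T_{\x})(\bar T_{\x}+\lam)^{1/2}\Big]\Big[(\bar T_{\x}+\lam)^{-1/2}(\bar T+\lam)^{1/2}\Big]\Big[(\bar T+\lam)^{-1/2}\bar S^{*}_{\x}\eps\Big].
\]
The first bracket is of order $\lam^{s-1/2}$ on the good event (by Definition~\ref{regudef}(i)--(ii), $s\leq\tfrac12$, and a Cordes-type inequality to trade $\bar T^s$ for $\bar T_{\x}^s$); the second bracket is $O(1)$ on the same event; the third bracket is an average of $n$ i.i.d.\ centred vectors in $\h$, whose $p$-th moment I would control by a Bernstein-type concentration inequality for Hilbert-space-valued sums (using \eqref{bernstein}), obtaining order $\sqrt{\sigma^2\NN(\lam)/n}$ up to lower-order terms, where $\NN(\lam):=\tr\big((\bar T+\lam)^{-1}\bar T\big)$ is the effective dimension. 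Since $\nux\in\priorle(b,\beta)$ forces $\NN(\lam)\lesssim\lam^{-1/b}$, the sample error is of order $\sigma\,\lam^{s-1/2-1/(2b)}n^{-1/2}$ in $L^p$. Balancing $R\lam^{r+s}$ against $\sigma\lam^{s-1/2-1/(2b)}n^{-1/2}$ then reproduces exactly the choice \eqref{eq:choicelam:averaged} and the rate \eqref{def:rate}.

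Finally I would upgrade these in-probability bounds to genuine $L^p$ bounds: on the complement of the concentration event I would invoke the crude deterministic/moment bounds for $\fo$ and $f_D^{\lam}$ (at worst polynomial in $n$, via $\norm{g_{\lam}}\leq E/\lam$, $\norm{f}_\infty\leq\kappa\norm{f}_{\h}$ and \eqref{bernstein}) and observe that this complement has super-polynomially small probability at $\lam=\lam_n$, because $n\lam_n^{1+1/b}$ grows like a positive power of $n$; its contribution is then negligible against $a_n$. Uniformity over $(\sigma,M,R)\in\R^3_+$ is immediate once one notes that $\lam_n$ and $a_n$ depend on $\sigma,R$ only through $\sigma^2/(R^2n)$ and that $\sigma,R$ enter the model class only through the linear scalings in \eqref{bernstein} and \eqref{sourceset}. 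The hard part will be precisely the perturbation step underlying both error terms: producing the ``algebraic'' empirical-versus-population operator estimates at the critical regularization scale $\lam_n$ sharply enough, and with a small enough exceptional probability, that the conclusion holds in $L^p$ --- not merely in probability --- and with the advertised scaling in $\sigma$ and $R$.
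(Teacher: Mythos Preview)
Your sketch is correct, but note that the paper does not actually \emph{prove} Theorem~\ref{theo:BlaMuc16}: it is merely quoted from \cite{BlaMuc16} (``For the sake of the reader we recall Theorem~\ref{theo:BlaMuc16}, which was already shown in \cite{BlaMuc16}''). So the paper's ``proof'' is a citation, nothing more.

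That said, your proposal reproduces exactly the argument of \cite{BlaMuc16}, and it also coincides with the $m=1$ specialisation of the paper's own Propositions~\ref{prop:expec_Approx} and~\ref{prop:expec_Sample} (which, incidentally, already carry out the ``upgrade from high probability to $L^p$ via integration'' that you flag as the hard part). One small correction to your commentary: the distributed proof in this paper does \emph{not} logically rest on Theorem~\ref{theo:BlaMuc16}. Corollary~\ref{maintheo:total_averaged} is obtained directly from Theorems~\ref{prop:approx_error} and~\ref{prop:sample_error_exp}, whose proofs are self-contained for general $m$; Theorem~\ref{theo:BlaMuc16} is invoked only afterwards, to derive Corollary~\ref{cor:rest} via the triangle inequality. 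So your observation that $m=1$ trivially satisfies \eqref{cond:m_alpha_final} is in fact the cleanest way to recover Theorem~\ref{theo:BlaMuc16} from the present paper, with no circularity.
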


Combining Corollary \ref{maintheo:total_averaged}\; with Theorem \ref{theo:BlaMuc16}\; by applying the triangle inequality immediately yields:

\begin{cor}
\label{cor:rest}
If the number $m$ of subsample sets satisfies 
\begin{equation}
\label{cond:m_alpha}
m\leq n^{\alpha} \;, \quad \alpha < \frac{2b\min\{ r,  1 \}}{2br+b+1}   \;,
\end{equation}
then 
\[
    \sup_{(\sigma,M,R) \in \R_+^3} \limsup_{n \rightarrow \infty} \sup_{\rho \in \mathcal{M}_{\sigma,M,R}} \frac{   \E_{\rho^{\otimes n}}
     \Big[\big\| \bar T^s( f^{\lam_n}_D - \bar f_D^{\lam_n} )   \big\|^p_{\h}\Big]^{\frac{1}{p}}}{a_{n}} < \infty  \,.
\]
\end{cor}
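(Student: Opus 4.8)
The plan is to obtain Corollary~\ref{cor:rest} purely as a consequence of the three results that precede it, by a triangle-inequality argument in $L^p(\rho^{\otimes n})$. Write
\[
\bar T^s\paren{f^{\lam_n}_D - \bar f^{\lam_n}_D} = \bar T^s\paren{f^{\lam_n}_D - \fo} + \bar T^s\paren{\fo - \bar f^{\lam_n}_D}\,,
\]
and apply the triangle inequality for the norm $\|\cdot\|_{\h}$ followed by the triangle inequality (Minkowski) for the $L^p$-norm over the product measure $\rho^{\otimes n}$. This gives
\[
\E_{\rho^{\otimes n}}\brac{\norm{\bar T^s(f^{\lam_n}_D - \bar f^{\lam_n}_D)}_{\h}^p}^{\frac1p}
\leq \E_{\rho^{\otimes n}}\brac{\norm{\bar T^s(\fo - f^{\lam_n}_D)}_{\h}^p}^{\frac1p}
+ \E_{\rho^{\otimes n}}\brac{\norm{\bar T^s(\fo - \bar f^{\lam_n}_D)}_{\h}^p}^{\frac1p}\,.
\]
Dividing both sides by $a_n$ and passing to $\sup_\rho$, then $\limsup_n$, then $\sup_\theta$, the first term on the right is controlled by Theorem~\ref{theo:BlaMuc16} (the single-machine bound, which holds for all $p\geq 1$ with no restriction on $m$) and the second by Corollary~\ref{maintheo:total_averaged}, which requires $\alpha < \frac{\min\{2br,\,b+1\}}{2br+b+1}$.

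The only genuine point to check is that the hypothesis actually imposed in Corollary~\ref{cor:rest}, namely $\alpha < \frac{2b\min\{r,1\}}{2br+b+1}$, is at least as strong as the hypothesis $\alpha < \frac{\min\{2br,\,b+1\}}{2br+b+1}$ needed to invoke Corollary~\ref{maintheo:total_averaged}. Since $2b\min\{r,1\} = \min\{2br,\,2b\}$ and $b>1$ gives $2b > b+1$, we have $\min\{2br,2b\} \geq \min\{2br,\,b+1\}$, so any $\alpha$ admissible in Corollary~\ref{cor:rest} is admissible in Corollary~\ref{maintheo:total_averaged}. (One can note that the hypothesis of Corollary~\ref{cor:rest} is in fact not the weakest possible for this particular statement, but it is the one under which the companion estimates in the paper are used together; there is nothing to prove beyond this monotonicity remark.)

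There is essentially no obstacle here: the argument is a two-line application of the triangle inequality plus the observation on the exponents. The subscript discrepancy $\lam_n$ versus $\lam_{n,(\sigma,R)}$ is purely notational, both referring to the choice \eqref{eq:choicelam:averaged}; and the supremum over $\theta=(\sigma,M,R)$ commutes with the triangle-inequality bound because the right-hand side is a sum of two nonnegative quantities, each of whose $\sup_\theta\limsup_n\sup_\rho$ is finite by the cited results, and $\limsup$ of a sum is bounded by the sum of $\limsup$'s. Thus the proof is complete once these elementary reductions are spelled out.
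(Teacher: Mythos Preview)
Your triangle-inequality decomposition is exactly what the paper does. However, the exponent check is backwards. You correctly establish $2b\min\{r,1\}=\min\{2br,2b\}\geq\min\{2br,b+1\}$ (since $b>1$), but this says the upper bound on $\alpha$ in Corollary~\ref{cor:rest} is \emph{at least as large} as the one in Corollary~\ref{maintheo:total_averaged}, not at most. Hence there are values of $\alpha$ admissible for Corollary~\ref{cor:rest} that are \emph{not} admissible for Corollary~\ref{maintheo:total_averaged} as stated: take $b=2$, $r=1$, $\alpha=1/2$; then $\tfrac{2b\min\{r,1\}}{2br+b+1}=\tfrac{4}{7}$ but $\tfrac{\min\{2br,b+1\}}{2br+b+1}=\tfrac{3}{7}$. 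So the implication you claim fails, and Corollary~\ref{maintheo:total_averaged} in its stated form is formally too weak to yield Corollary~\ref{cor:rest} under hypothesis~\eqref{cond:m_alpha}.

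The fix is to bypass Corollary~\ref{maintheo:total_averaged} and invoke its ingredients directly. Corollary~\ref{maintheo:total_averaged} is itself obtained from decomposition~\eqref{eq:decompo} via Theorem~\ref{prop:approx_error} (requiring $\alpha<\tfrac{2b\min\{r,1\}}{2br+b+1}$) and Theorem~\ref{prop:sample_error_exp} (requiring $\alpha<\tfrac{2br}{2br+b+1}$); since $\min\{r,1\}\leq r$, the conjunction of these two is precisely $\alpha<\tfrac{2b\min\{r,1\}}{2br+b+1}$, matching~\eqref{cond:m_alpha}. In other words, the conclusion of Corollary~\ref{maintheo:total_averaged} already holds under condition~\eqref{cond:m_alpha}, and the stronger threshold~\eqref{cond:m_alpha_final} appears to be an inconsistency in the paper. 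Once you use Theorems~\ref{prop:approx_error} and~\ref{prop:sample_error_exp} (plus Theorem~\ref{theo:BlaMuc16}) directly, your triangle-inequality argument is complete.
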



\section{Numerical Studies}
\label{sec:numerical_studies}

In this section we numerically study  the error in $\h$- norm, corresponding to $s=0$ in Corollary \ref{maintheo:total_averaged} (in expectation with 
$p=2$) both in the single machine and distributed learning setting. 
Our main interest is to study the upper bound for our theoretical exponent $\alpha$, parametrizing the size of subsamples in terms of the total sample size, $m =n^\alpha$, in different smoothness regimes. In addition we shall demonstrate in which way parallelization serves as a form of regularization.
 
More specifically, we let $\h=H_0^1[0,1]$ with kernel $K(x,t)= x\wedge t-xt$. 
For all experiments in this section, we simulate data from the regression model
\[  Y_i = \fo (X_i) + \epsilon_i \;, \qquad \; i=1,...,n\;,  \] 
where the input variables $X_i \sim Unif[0,1]$ are uniformly distributed and the noise variables $\eps_i \sim N(0,\sigma^2) $ are normally distributed with standard deviation  $\sigma=0.005$. 
We choose the target function $\fo$ according to two different cases, namely $r<1$ ({\it low smoothness}) and $r=\infty$ ({\it high smoothness}). 
To accurately determine the degree of smoothness $r>0$, we apply Proposition \ref{prop:fourier}  below by explicitly calculating the Fourier coefficients $(\inner{ \fo , e_j}_{\h})_{j \in \N}$, where $e_j(x) = \frac{\sqrt{2}}{\pi j}\cos(\pi j x)$, for $j \in \N^*$,  forms an ONB of $\h$. Recall that the rate of eigenvalue decay is explicitly given by  $b=2$, meaning that we have full control over all parameters in \eqref{cond:m_alpha}.
From \cite{engl} we need

\begin{prop}
\label{prop:fourier}
Let $\h, {\cal H}_2$ be separable Hilbert spaces and $S: \h \longrightarrow {\cal H}_2$ be a  compact linear operator with singular system $\{\sigma_j, \varphi_j, \psi_j \}$\footnote{i.e., the $\varphi_j$ are the normalized eigenfunctions of $S^* S$ with eigenvalues $\sigma_j^2$ and $\psi_j= S \varphi_j / ||S \varphi_j||$;
thus $S= \sum \sigma_j \la \varphi_j, \cdot \ra \psi_j$}. Denoting by $S^{\dagger}$ the generalized inverse \footnote{the unique unbounded linear operator
with domain $\Ran (S) \oplus (\Ran (S))^\perp$ in $ {\cal H}_2$ vanishing on $(\Ran (S))^\perp$ and satisfying $S S^{\dagger}= 1$ on $ \Ran(S) $, with range orthogonal to the null space $N(S)$} of $S$, one has for any $r>0$ and $g \in {\cal H}_2$:

$g$ is in the domain of $S^{\dagger}$ and  $S^{\dagger}g \in \Ran((S^{*}S)^r)$ if and only if 
\[  \sum_{j=0}^{\infty}  \frac{|\inner{g, \psi_j}_{{\cal H}_2}|^2}{\sigma_j^{2+4r}} \; < \; \infty \;. \]
\end{prop}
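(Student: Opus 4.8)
The plan is to reduce the statement to a diagonal computation in the singular system of $S$, combined with two classical facts: the Picard criterion for membership in the domain of the generalized inverse, and the spectral description of the range of a fractional power of a positive compact operator. The whole argument is essentially that of \cite{engl}, reproduced here for completeness.

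First I would fix the relevant expansions. Since $S$ is compact with singular system $\{\sigma_j,\varphi_j,\psi_j\}$, the $\varphi_j$ form an orthonormal basis of $\overline{\Ran(S^*)}=N(S)^\perp$ and the $\psi_j$ form an orthonormal basis of $\overline{\Ran(S)}=N(S^*)^\perp$; the care needed here is to check these systems are genuine bases of those subspaces, not merely orthonormal sets. Writing $Q$ for the orthogonal projection of ${\cal H}_2$ onto $\overline{\Ran(S)}$, one has $Qg=\sum_j\inner{g,\psi_j}_{{\cal H}_2}\psi_j$, while $g-Qg\in(\Ran(S))^\perp$ always belongs to $\operatorname{dom}(S^\dagger)$ with $S^\dagger(g-Qg)=0$. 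From the defining properties of $S^\dagger$ (range orthogonal to $N(S)$, vanishing on $(\Ran(S))^\perp$, $SS^\dagger=\mathrm{id}$ on $\Ran(S)$) I would derive the Picard criterion: $g\in\operatorname{dom}(S^\dagger)=\Ran(S)\oplus(\Ran(S))^\perp$ if and only if $\sum_j\sigma_j^{-2}\abs{\inner{g,\psi_j}}^2<\infty$, in which case $S^\dagger g=\sum_j\sigma_j^{-1}\inner{g,\psi_j}\varphi_j$.

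Next I would characterize $\Ran((S^*S)^r)$. By the spectral theorem, $S^*S=\sum_j\sigma_j^2\inner{\cdot,\varphi_j}\varphi_j$, so $(S^*S)^r$ acts as multiplication by $\sigma_j^{2r}$ on $\varphi_j$ and vanishes on $N(S)$; hence $\Ran((S^*S)^r)\subseteq N(S)^\perp$, and a vector $f\in\h$ lies in $\Ran((S^*S)^r)$ if and only if $f\in N(S)^\perp$ and $\sum_j\sigma_j^{-4r}\abs{\inner{f,\varphi_j}_\h}^2<\infty$ (with preimage $\sum_j\sigma_j^{-2r}\inner{f,\varphi_j}\varphi_j$). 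Since $S^\dagger g$ automatically lies in $N(S)^\perp$, I apply this with $f=S^\dagger g$ and substitute $\inner{S^\dagger g,\varphi_j}_\h=\sigma_j^{-1}\inner{g,\psi_j}_{{\cal H}_2}$, obtaining that $S^\dagger g\in\Ran((S^*S)^r)$ is equivalent to $\sum_j\sigma_j^{-2-4r}\abs{\inner{g,\psi_j}}^2<\infty$.

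Finally, I would observe that because $S$ is compact the $\sigma_j$ are bounded, so for $r>0$ convergence of $\sum_j\sigma_j^{-2-4r}\abs{\inner{g,\psi_j}}^2$ forces convergence of $\sum_j\sigma_j^{-2}\abs{\inner{g,\psi_j}}^2$ (the factor $\sigma_j^{4r}$ being bounded above), i.e. it already implies $g\in\operatorname{dom}(S^\dagger)$ via Picard; this reconciles the stated equivalence with the extra clause ``$g$ is in the domain of $S^\dagger$'' on the left-hand side. There is no genuine obstacle: the only delicate points are the bookkeeping of null spaces and closures of ranges and the precise domain of the unbounded operator $S^\dagger$, and once these are in place the equivalence is a one-line computation on the singular values.
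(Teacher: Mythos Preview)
The paper does not give its own proof of this proposition; it is simply quoted from \cite{engl} (``From \cite{engl} we need\ldots''). Your argument is correct and is precisely the standard proof one finds there: the Picard criterion for $\operatorname{dom}(S^\dagger)$, the spectral description of $\Ran((S^*S)^r)$, and the observation that for $r>0$ the series condition $\sum_j\sigma_j^{-2-4r}\abs{\inner{g,\psi_j}}^2<\infty$ already forces $g\in\operatorname{dom}(S^\dagger)$.
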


In our case, $\h$ is as above, ${\cal H}_2$ is $L^2([0,1])$ with Lebesgue measure and $S: H_0^1[0,1] \to L^2([0,1])$ is the inclusion. 
Since $H_0^1[0,1]$ is dense in $L^2([0,1])$,  we know that $(\Ran (S))^\perp$ is trivial, giving $S S^{\dagger} =1$ on $\Ran (S)$. Furthermore, $\varphi_j=e_j$ is a normalized eigenbasis of 
$T=S^*S$ with eigenvalues $\sigma_j^2  = (\pi j)^{-2}$. With $\psi_j=\frac{S\varphi_j}{||S\varphi_j||_{L^2}}$
we obtain for $f \in H_0^1[0,1]$
\[
\la Sf, \psi_j \ra_{L^2} = \la Sf, \frac{S e_j}{||S e_j||} \rangle_{L^2} = \la f, \frac{S^*S e_j}{||S e_j||} \ra_{H_0^1} =\sigma_j \la f, e_j \ra_{H_0^1}.
\]
Thus, applying Proposition \ref{prop:fourier} gives 

\begin{cor}
\label{finalfourier}
For $S$ and $T=S^* S$ as above we have for any $r>0$: $f \in \Ran (T^r)$ if and only if
\[
\sum_{j=1}^\infty  j^{4r} |\la f, e_j \ra_{L^2}|^2 < \infty \;.
\]
\end{cor}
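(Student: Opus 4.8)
The plan is to read the stated condition as the orthogonal-series (Picard) characterisation of the range of a power of the integral operator, carried out \emph{entirely in the $L^2$ geometry} so that the coefficients appearing are the $L^2$-Fourier coefficients $\langle f,e_j\rangle_{L^2}$. First I would fix the spectral data in $L^2([0,1])$. Since $S$ is the inclusion of the dense subspace $H_0^1[0,1]$ into $L^2([0,1])$, it has trivial kernel and dense range; hence the system $\psi_j:=S e_j/\|Se_j\|_{L^2}$ built from the eigenbasis $\{e_j\}$ of $T$ is in fact a \emph{complete} orthonormal basis of $L^2([0,1])$ (its closed span is $\overline{\Ran S}=L^2$), and the associated singular values are $\sigma_j=\|Se_j\|_{L^2}=(\pi j)^{-1}$, i.e. the eigenvalues are $\lambda_j=\sigma_j^2=(\pi j)^{-2}$. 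Taking these $L^2$-normalised eigenfunctions as the $e_j$ of the statement, every $f\in L^2$ expands as $f=\sum_j\langle f,e_j\rangle_{L^2}\,e_j$, and $T^r$ acts diagonally with eigenvalues $\lambda_j^{\,r}$ on this basis.

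Next I would invoke the Picard characterisation of $\Ran(T^r)$ for the compact positive self-adjoint operator $T$ realised on $L^2$, which is the operator-theoretic content of Proposition~\ref{prop:fourier}: a vector $f$ lies in $\Ran(T^r)$ if and only if $\sum_j \lambda_j^{-2r}\,|\langle f,e_j\rangle_{L^2}|^2<\infty$. This is immediate from the diagonal form, writing $f=T^r h$ with $h=\sum_j b_j e_j\in L^2$, so that $\langle f,e_j\rangle_{L^2}=\lambda_j^{\,r}b_j$ and the condition $\sum_j|b_j|^2<\infty$ becomes $\sum_j\lambda_j^{-2r}|\langle f,e_j\rangle_{L^2}|^2<\infty$. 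Substituting $\lambda_j=(\pi j)^{-2}$ turns the weight $\lambda_j^{-2r}$ into $(\pi j)^{4r}\asymp j^{4r}$, and since only finiteness matters the harmless constant $\pi^{4r}$ may be dropped. This yields exactly $\sum_{j=1}^\infty j^{4r}\,|\langle f,e_j\rangle_{L^2}|^2<\infty$, as claimed. The preceding display, $\langle Sf,\psi_j\rangle_{L^2}=\sigma_j\langle f,e_j\rangle_{\h}$, is then re-usable as a consistency check: its left-hand side is precisely the $L^2$-coefficient $\langle f,e_j\rangle_{L^2}$ against the $L^2$-normalised eigenfunction, confirming that the quantities entering the criterion are genuinely the $L^2$ ones.

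The step I expect to be the main obstacle is the normalisation bookkeeping, which is exactly where one must be careful to land on the $L^2$ criterion rather than the RKHS one. The same eigenfunctions carry a factor $\sigma_j=(\pi j)^{-1}$ between their $L^2$- and $\h$-normalisations, and correspondingly $T$ admits two realisations whose ranges live in different spaces; converting a coefficient $\langle f,e_j\rangle_{L^2}$ into $\langle f,e_j\rangle_{\h}$ inserts a power of $\sigma_j$ and shifts the exponent in the summability condition. The delicate point is therefore to commit, once and for all, to the $L^2$ inner product together with the $L^2$-orthonormal eigenbasis and the $L^2$ realisation of $T$, and to verify that the Picard weight is precisely $\lambda_j^{-2r}$ with no stray factor of $\lambda_j$; only then does the eigenvalue decay $\lambda_j=(\pi j)^{-2}$ produce the exponent $4r$ against the $L^2$-coefficients. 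Once this choice is pinned down, the remaining ingredients (completeness of $\{e_j\}$ in $L^2$ and the identification $\lambda_j=(\pi j)^{-2}$) are the routine computations already carried out above.
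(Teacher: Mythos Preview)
Your argument has a genuine gap at exactly the point you flagged as delicate. When you pass to ``the $L^2$ realisation of $T$'' and write $f=T^{r}h$ with $h=\sum_j b_j e_j\in L^2$, you have silently replaced $T=S^*S:\h\to\h$ by $L:=SS^*:L^2\to L^2$. But $\Ran(T^r)$ and $\Ran(L^r)$ are not the same set of functions: the source condition demands $h\in\h$, and in $L^2$-coordinates (against the $L^2$-orthonormal basis $\psi_j$) the membership $h\in\h$ reads $\sum_j\lambda_j^{-1}|b_j|^2<\infty$, not merely $\sum_j|b_j|^2<\infty$. Carrying this extra weight through, the correct Picard criterion in $L^2$-coefficients is $\sum_j\lambda_j^{-2r-1}|\langle f,\psi_j\rangle_{L^2}|^2<\infty$, i.e.\ exponent $4r+2$, not $4r$ --- so there \emph{is} a stray factor of $\lambda_j$. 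A sanity check against the paper's own example $f_\rho(x)=\tfrac12 x(1-x)$ confirms this: its coefficients against the $L^2$-normalised sine basis decay like $j^{-3}$, and your criterion would give $f_\rho\in\Ran(T^r)$ for all $r<5/4$, contradicting the claimed threshold $r<3/4$.

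The paper's own argument avoids this trap by applying Proposition~\ref{prop:fourier} verbatim --- its weight $\sigma_j^{-2-4r}$ already encodes the requirement that the preimage lie in $\h$ --- and then using the identity $\langle Sf,\psi_j\rangle_{L^2}=\sigma_j\langle f,e_j\rangle_{H_0^1}$ established just before the corollary to convert to $\h$-coefficients; the two extra powers of $\sigma_j$ cancel and one lands on $\sum_j j^{4r}|\langle f,e_j\rangle_{\h}|^2<\infty$. (The subscript $L^2$ in the displayed statement thus appears to be a slip for $H_0^1$; the paper's computation $\langle f_\rho,e_j\rangle\sim j^{-2}$ and the resulting threshold $r<3/4$ are consistent only with the $\h$-reading.)
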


Thus, as expected, abstract smoothness measured by the parameter $r$ in the source condition corresponds in this special case to decay of the classical Fourier coefficients which - by the classical theory of Fourier series - measures  smoothness of the periodic continuation  of $f \in L^2([0,1])$ to the real line.

\subsubsection{Low smoothness}

We choose $\fo(x) = \frac{1}{2}x(1-x)$ which clearly belongs to $\h$. A straightforward calculation gives the Fourier coefficient
$\la \fo, e_j \ra = - 2 (\pi j)^{-2}$ for $j$ odd (vanishing for $j$ even). Thus, by the above criterion,
$\fo$ satisfies the source condition $\fo \in Ran(T^r)$ precisely for  $0<r<0.75$\;. 
According to Theorem \ref{theo:BlaMuc16}, the worst case rate in the single machine problem is given by  $n^{-\gamma}$, with $\gamma = 0.25$\;. Regularization 
is done using the $\nu-$ method (see Example \ref{ex:numethod}), with qualification $q= \nu =1$. Recall that the stopping index $k_{stop}$ serves as the regularization parameter $\lambda$, where $ k_{stop} \sim  \lam^{-2}$.  We consider sample sizes from $500,...9000$.  
In the model selection step, we estimate the performance of different models and choose the {\it oracle stopping time} $\hat k_{oracle}$ by minimizing the reconstruction error: 
\[ \hat k_{oracle} = \mbox{arg}\min_{k}  \paren{  \frac{1}{M} \sum_{j=1}^M \norm{ \fo- \hat f_j^k  }^2_{\h} }^{\frac{1}{2}}   \]
over $M=30$ runs. 

In the model assessment step,  
we partition the dataset into $m\sim n^{\alpha}$ subsamples, for any  
$\alpha \in \{ 0, 0.05, 0.1, ..., 0.85 \}$. On each subsample we regularize using the oracle stopping time 
$\hat k_{oracle}$ (determined by using the whole sample). Corresponding to Corollary \ref{maintheo:total_averaged}, the accuracy should be comparable to the one using the whole sample as long as $\alpha < 0.5$\;. In Figure \ref{fig:one} (left panel) we plot the reconstruction error 
$|| \bar f^{\hat k}-\fo ||_{\h}$ versus the ratio $\alpha = \log(m)/\log(n)$ for different sample sizes. We execute each simulation $M=30$ times. The plot supports our theoretical finding.  The right panel shows the reconstruction error versus the total number of samples using 
different partitions of the data. The black curve ($\alpha = 0$) corresponds to the baseline error ($m=0$, no  partition of data).  Error curves below a threshold 
$\alpha < 0.6$ are roughly comparable, whereas curves above this threshold show a gap in performances.  

In another experiment we study the performances in case of (very) different regularization:  Only partitioning the data (no regularization), underregularization (higher stopping index) and overregularization (lower stopping index). The outcome of this experiment amplifies the regularization effect of parallelizing.  Figure \ref{fig:two} shows  the main point:
Overregularization is always hopeless, underregularization is  better. In the extreme case of none or almost none regularization there is a sharp minimum in the reconstruction error which is only slightly larger than the minimax optimal value for the oracle regularization parameter and which is achieved at an attractively large degree of parallelization. Qualitatively, this agrees very well with the intuitive notion that parallelizing serves as  regularization. 

We emphasize that numerical results seem to indicate that parallelization is possible to a slightly larger degree than indicated by our theoretical estimate. A similar result was reported in the paper \cite{ZhangDuchWai15}, which also treats the low smoothness case. 

\subsubsection{High smoothness}

We choose $\fo(x) = \frac{1}{2\pi}\sin(2\pi x)$, 
which corresponds to just one non-vanishing Fourier coefficient and by our criterion Corollary \ref{finalfourier} has $r=\infty$ . 
In view of our main  Corollary \ref{maintheo:total_averaged} this requires a regularization method with higher qualification; we take the {\it Gradient Descent} method (see Example \ref{landweber}).

The appearance of the term $2b\min\{1, r\}$ in our theoretical result \ref{maintheo:total_averaged} gives a predicted value 
$\alpha =0$ (and would imply that parallelization is strictly forbidden for infinite smoothness). More specifically, the left panel in Figure \ref{fig:three} shows the absence of any plateau for the reconstruction error as a function of $\alpha$. This corresponds to the right panel showing that no group of values of $\alpha$ performs roughly equivalently, meaning that we do not have any optimality guarantees.

Plotting different values of regularization in Figure  \ref{fig:four} we again identify overregularization as hopeless, while severe underregularization  exhibits a sharp minimum
in the reconstruction error. But its value at roughly $0.25$ is much less attractive compared to the case of low smoothness where the error is an order of magnitude less.


\begin{figure}[H]
\includegraphics[width=80mm, height=80mm]{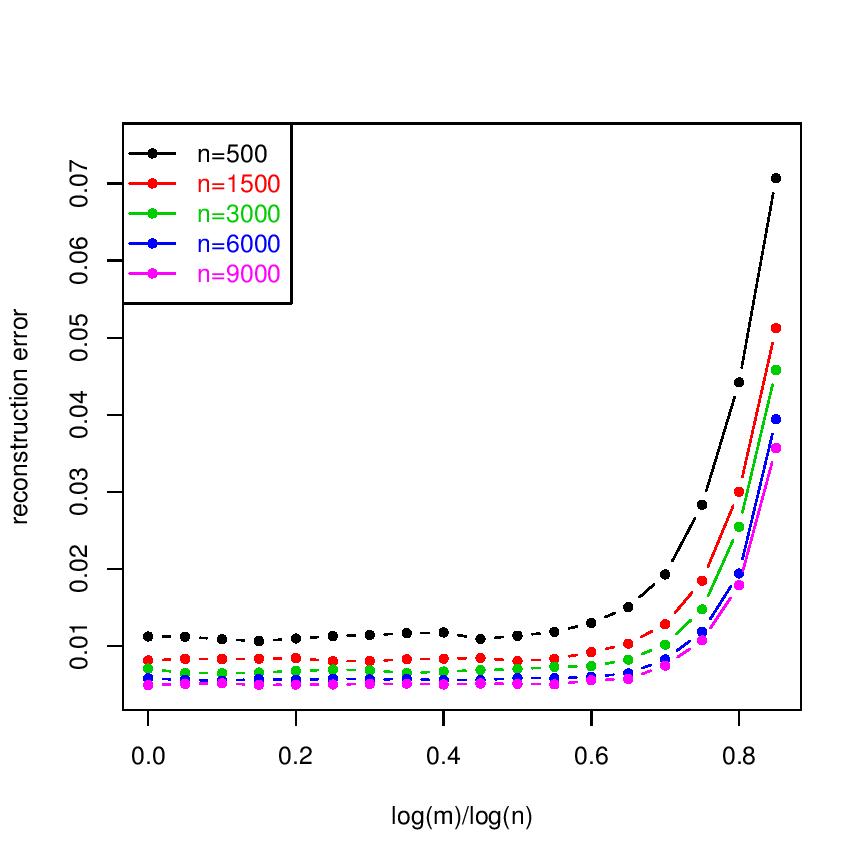}
\includegraphics[width=80mm, height=80mm]{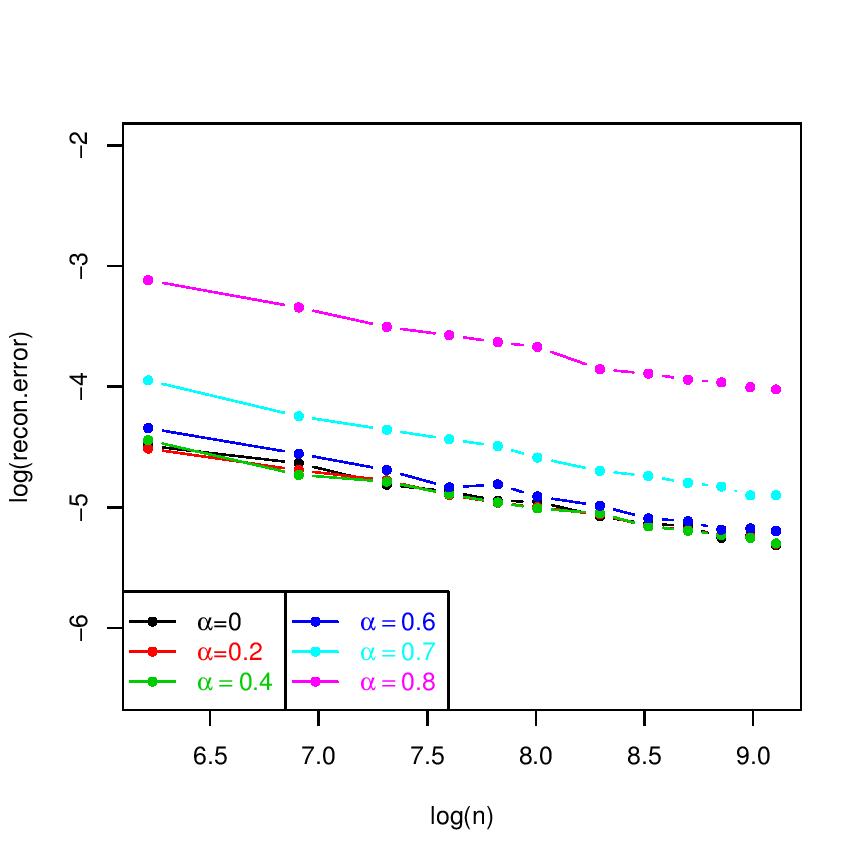}
\caption{{\small The reconstruction error  $||\bar f_{D}^{k_{oracle}} -\fo||_{\h}$ in the low smoothness case. 
Left plot: Reconstruction error curves for various (but fixed) sample sizes as a function of the number of partitions. 
Right plot: Reconstruction error curves for various (but fixed) numbers of partitions as a function of the sample size (on log-scale).} }
\label{fig:one}
\end{figure}

\begin{figure}[H]
\includegraphics[width=80mm, height=80mm]{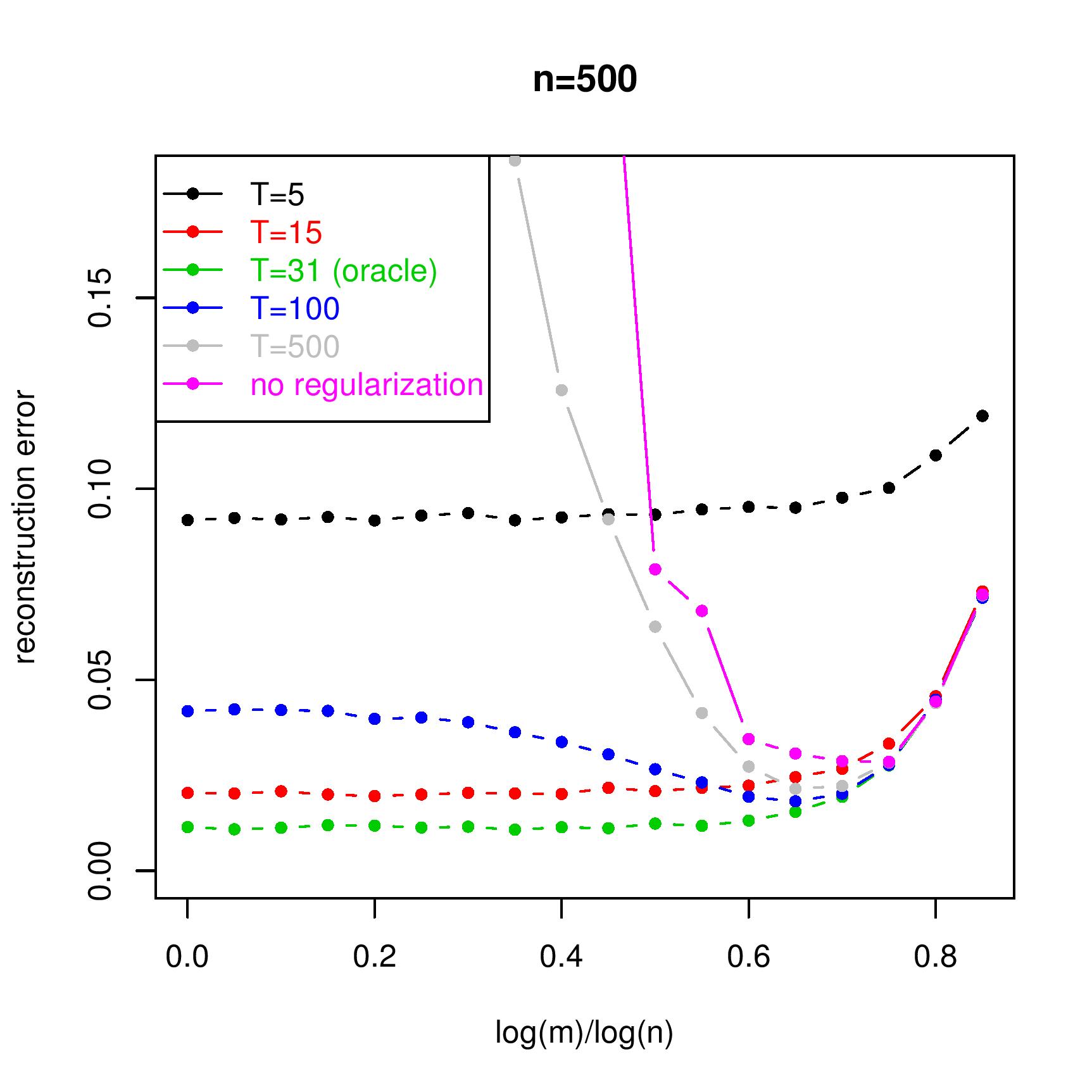}
\includegraphics[width=80mm, height=80mm]{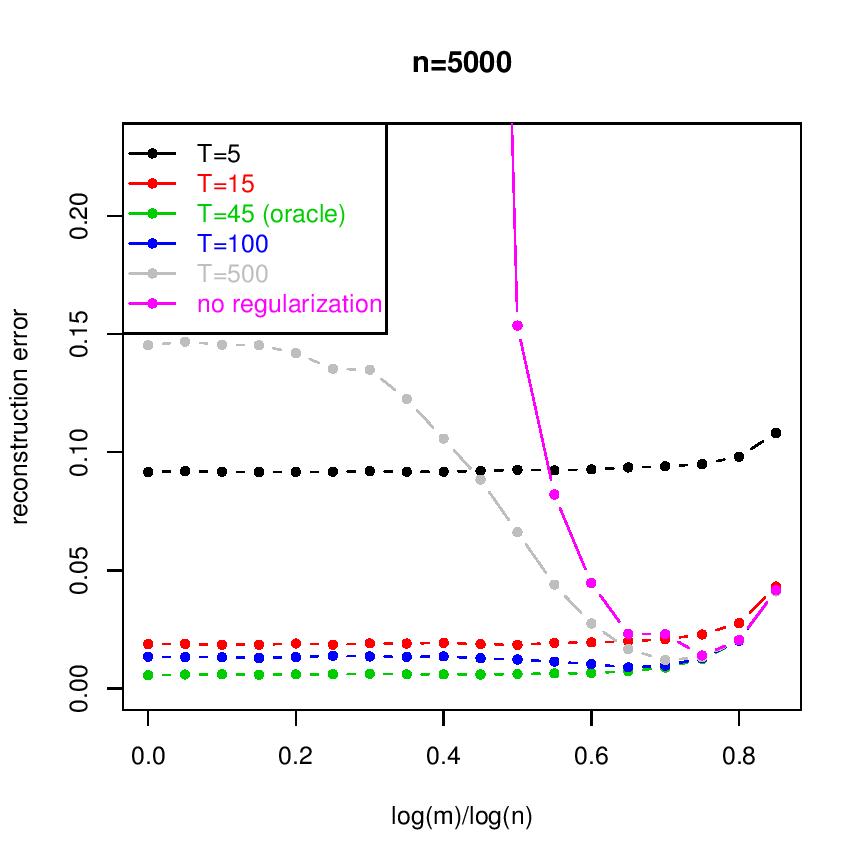}
\caption{{\small The reconstruction error  $||\bar f_{D}^{\lam} -\fo||_{\h}$ in the low smoothness case. 
Left plot: Error curves for different stopping times for  $n=500$ samples,   as a function of the number of partitions. 
Right plot: Error curves for different stopping times for  $n=5000$ samples,   as a function of the number of partitions.}} 
\label{fig:two}
\end{figure}

\begin{figure}[H]
\includegraphics[width=80mm, height=80mm]{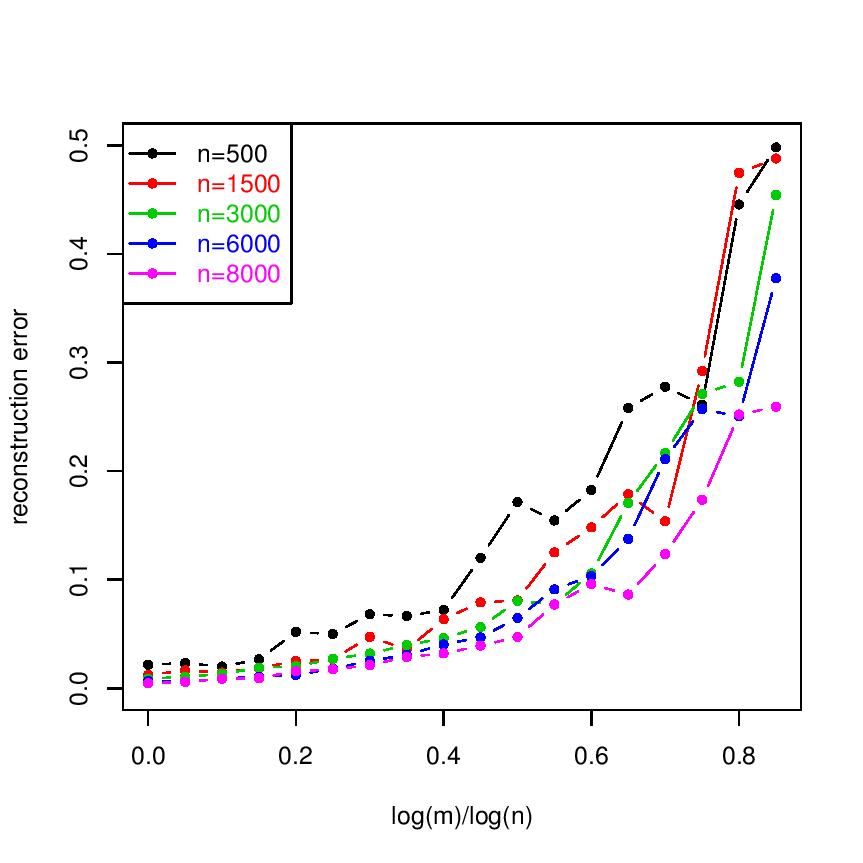}
\includegraphics[width=80mm, height=80mm]{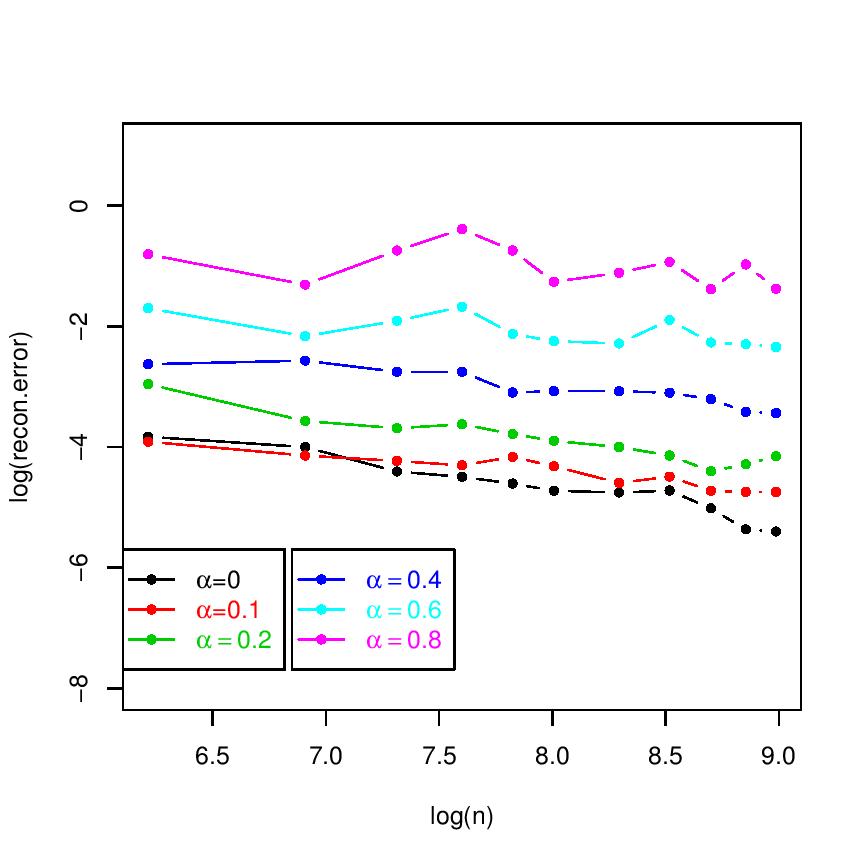}
\caption{{\small The reconstruction error  $||\bar f_{D}^{\lam_{oracle}} -\fo||_{\h}$ in the high smoothness case. 
Left plot: Reconstruction error curves for various (but fixed) sample sizes as a function of the number of partitions. 
Right plot: Reconstruction error curves for various (but fixed) numbers of partitions as a function of the sample size (on log-scale).} }
\label{fig:three}
\end{figure}

\begin{figure}[H]
\includegraphics[width=80mm, height=80mm]{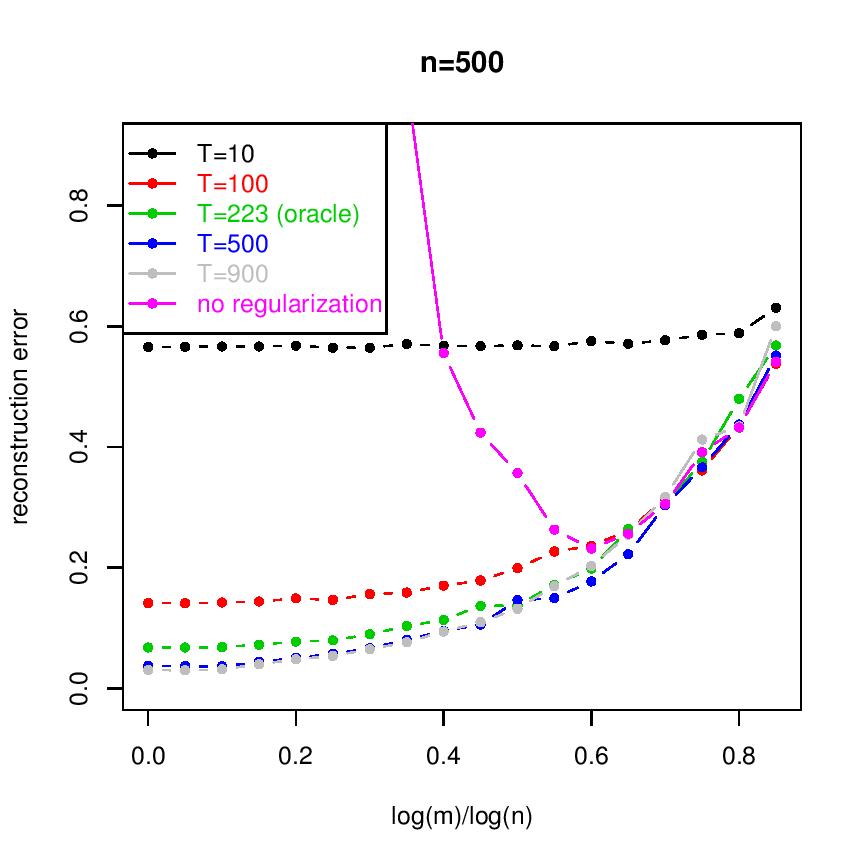}
\includegraphics[width=80mm, height=80mm]{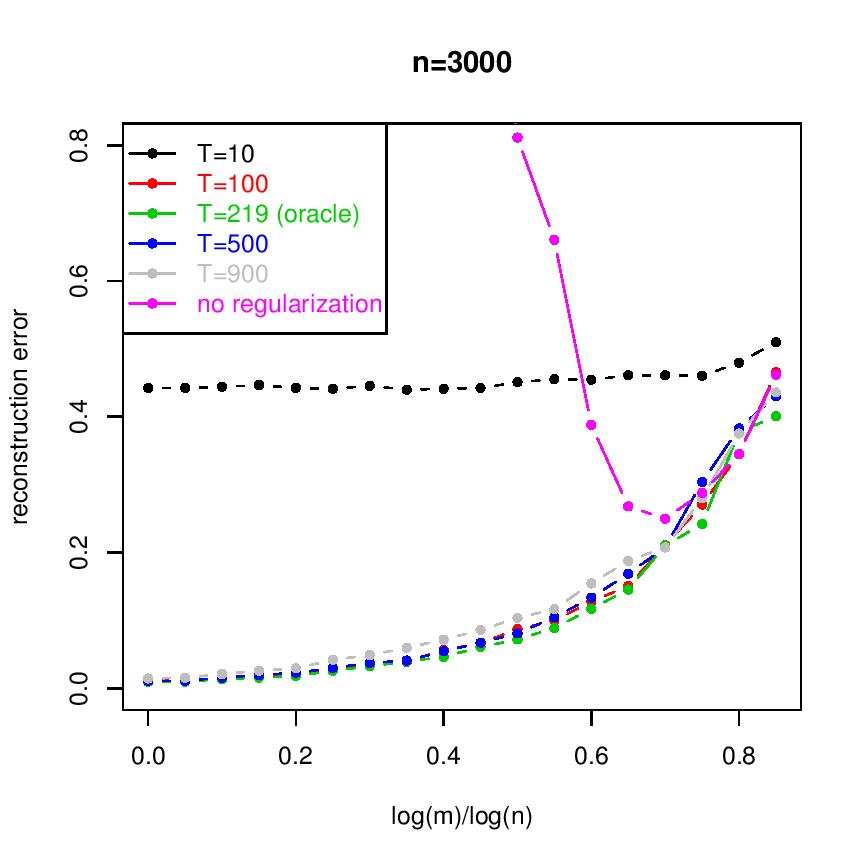}
\caption{{\small The reconstruction error  $||\bar f_{D}^{\lam} - \fo||$ in the high smoothness case. 
Left plot: Error curves for different stopping times for  $n=500$ samples,   as a function of the number of partitions. 
Right plot: Error curves for different stopping times for  $n=5000$ samples,   as a function of the number of partitions.}} 
\label{fig:four}
\end{figure}



\section{Discussion}
\label{discussion}

{\bf Minimax Optimality:}
We have shown that for a large class of spectral regularization methods the error of the distributed algorithm 
$|| \bar T^s(\bar f_D^{\lam_n}- \fo)||_{\h}$ satisfies the same upper bound as the error $\norm{ \bar T^s(f_D^{\lam_n}- \fo)}_{\h}$ for the single machine problem,  if the regularization parameter $\lam_n$ is chosen according to (\ref{eq:choicelam:averaged}), provided the number of subsamples grows sufficiently slowly with the sample size $n$\,. 
Since, by  \cite{BlaMuc16}, the rates for the latter are minimax optimal, our rates 
in Corollary \ref{maintheo:total_averaged} are minimax optimal also.
\\
\\
{\bf Comparison with other results:}
In \cite{ZhangDuchWai15} the authors derive Minimax-optimal rates in 3 cases: finite rank kernels, sub- Gaussian decay of eigenvalues of the kernel and polynomial decay, provided $m$  satisfies a certain upper bound, depending on the rate of decay of the eigenvalues under two crucial assumptions on the eigenfunctions of the integral operator associated to the kernel: For any $j \in \N$
\begin{equation}
\label{ass:eigen_1}
  \E [\phi_j(X)^{2k}] \leq \rho_k^{2k} \;,   
\end{equation}  
for some $k\geq 2$ and $\rho_k < \infty$ or even stronger, it is assumed  that the eigenfunctions are uniformly bounded, i.e.
\begin{equation}
\label{ass:eigen_2}
  \sup_{x \in \X} |\phi_j(x)| \leq \rho\;, 
\end{equation}  
or any $j \in \N$ and some $\rho < \infty$.  
We shall describe in more detail the case of polynomially decaying eigenvalues, which corresponds to our setting. Assuming eigenvalue decay $\mu_j \lesssim j^{-b}$ with $b > 1$, the authors choose a regularization parameter $\lam_n = n^{-\frac{b}{b+1}}$ and
\[    m \lesssim \paren{ \frac{n^{\frac{b(k-4)-k}{b+1}}}{\rho^{4k}\log^k(n)} }^{\frac{1}{k-2}} \;.\]
leading to an error in $L^2$- norm 
\[  \E[ ||\bar f_D^{\lam_n}- \fo ||^2_{L^2} ] \lesssim n^{-\frac{b}{b+1}}         \;, \]
being minimax optimal. 

For $k < 4$, this is not a useful bound, since $m \to 1$ as $n \to \infty$ in this case (for any sort of eigenvalue decay). On the other hand, if $k$ and $b$ might be taken arbitrarily large
- corresponding to almost bounded eigenfunctions and arbitrarily large polynomial decay of eigenvalues -  $m$  might be chosen proportional to $n^{1- \epsilon}$, for any $\epsilon >0$. As might be expected, replacing the $L^{2k}$ bound on the eigenfunctions by a bound in $L^\infty$, gives an upper bound on $m$ which simply is the limit for $k \to \infty$ in the bound given above, namely
\[     m \lesssim  \frac{n^{\frac{b-1}{b+1}}}{\rho^4  \log n}\; ,        \]
which for large $b$ behaves as above. Granted bounds on the eigenfunctions in $L^{2k}$ for (very) large $k$, this is a strong result. 
While the decay rate of the eigenvalues can be determined by the smoothness  of  $K$ (see, e.g., \cite{Men09} and references therein), it is a widely open question  which general  properties of the kernel imply estimates as in \eqref{ass:eigen_1} and \eqref{ass:eigen_2} on the eigenfunctions. 

The author in \cite{Zhou2002} even gives a counterexample and presents a $C^{\infty}$ Mercer kernel on $[0,1]$ where the eigenfunctions of the corresponding integral operator are {\it not} uniformly bounded. Thus, smoothness of the kernel is not a sufficient condition for \eqref{ass:eigen_2} to hold.

Moreover, we point out that the upper bound \eqref{ass:eigen_1} on the eigenfunctions (and thus the upper bound for $m$ in \cite{ZhangDuchWai15}) depends on the (unknown) marginal distribution $\nu$ (only the strongest assumption, a bound in sup-norm \eqref{ass:eigen_2}, does not depend on $\nu$). Concerning this point, our approach  
is "agnostic".

As already mentioned in the Introduction,  these bounds on the eigenfunctions have been eliminated in \cite{Zhou15}, for KRR, imposing polynomial decay of eigenvalues as above. This is very similar to our approach. 
As a general rule, our bounds on $m$ and the bounds  in \cite{Zhou15} are worse than the bounds in \cite{ZhangDuchWai15} for eigenfunctions in (or close to ) $L^\infty$, but in the complementary case where nothing is known on the eigenfunctions $m$ still can be chosen as an increasing function of $n$, namely $m=n^\alpha$.  
More precisely, choosing $\lam_n$ as in (\ref{eq:choicelam:averaged}), the authors in \cite{Zhou15} derive as an upper bound 
\[ m \lesssim n^{\alpha} \; , \qquad \alpha = \frac{2br}{2br + b + 1}  \;,    \]
with $r$ 
being the smoothness parameter arising in the source condition. We recall here that due to our assumption $q \geq r + s$, 
the smoothness parameter $r$ is restricted to the interval $(0, \frac{1}{2}]$ for KRR ($q=1$) and $L^2$ risk ($s=\frac{1}{2}$).

Our results (which hold for a general class of spectral regularization methods) are in some ways comparable to  \cite{Zhou15}. 
Specialized to KRR, our estimates for the exponent $\alpha$  in $m= O(n^\alpha)$ coincide with the result given in \cite{Zhou15}\;. 
Furthermore we emphasize that \cite{ZhangDuchWai15} and \cite{Zhou15} estimate the DL-error only for $s=1/2$ in our notation (corresponding to  $L^2(\nu)-$ norm), while our result holds for all values of $ s \in [0,1/2]$ which smoothly interpolates between $L^2(\nu)-$ norm and RKHS$-$ norm and, in addition, for all values of $p \in [1,\infty)$. Thus, our results also apply to the case of non-parametric inverse regression, where one is particularly interested in the reconstruction error (i.e. $\h$- norm), see e.g. \cite{BlaMuc16}. 
Additionally, we precisely analyze the dependence of the noise variance $\sigma^2$  and the complexity radius $R$ in the source condition.


Concerning general strategy, while \cite{Zhou15} uses a novel second order decomposition in an essential way, our approach is more classical. 
We clearly distinguish between estimating the approximation error and the sample error.  The bias using a subsample should be of the
same order as when using the whole sample, whereas the estimation error is higher on each subsample, but gets
reduced by averaging by writing the variance as a sum of i.i.d random variables (which allows to use Rosenthal's inequality). 

Finally, we want to mention the recent works \cite{Lin17} and \cite{GuLiZh17}, which were worked out indepently from our work. The authors in \cite{GuLiZh17} also treat general 
spectral regularization methods (going beyond kernel ridge) and obtain essentially the same results, but with error bounds only in $L^2$- norm, excluding inverse learning problems. 
In \cite{Lin17}, the authors investigate distributed learning on the example of Gradient Descent algorithms, which have infinite qualification and allow larger smoothness of the regression function. They are able to improve the upper bound for the number of local machines to
\[  m \lesssim \frac{n^{\alpha}}{\log^5(n)+1} \;, \qquad \alpha < \frac{br}{2br+b+1}   \]
which is larger in case $r>2$. In the intermediate case $1<r<2$, our bound in \eqref{cond:m_alpha_final} is still better. 
An interesting feature is the fact that it is possible to allow more local machines by using additional unlabeled data. This indicates that finding the upper bound for the number of machines in the high smoothness regime is still an open problem.  
\\
\\
{\bf Number of Subsamples:}
We follow the line of reasoning in earlier work on distributed learning insofar as we only prove {\em sufficient conditions} on the cardinality 
$m=n^\alpha$ of subsamples compatible with minimax optimal rates of convergence. On the complementary problem of proving {\em necessity}, analytical results are unknown to the best of our knowledge. However, our numerical results seem to indicate that the exponent $\alpha$ might actually be taken larger than we have proved so far in the low smoothness regime. 
\\
\\
{\bf Adaptivity:}
It is clear from the theoretical results that both the regularization parameter $\lambda$ and the allowed cardinality of subsamples $m$ depend on the parameters  
$r$ and $b$, which in general are unknown. Thus, an adaptive approach to both parameters $b$ and $r$ for choosing $\lambda$ and $m$  is of interest.  
To the best of our knowledge, there are yet no rigorous results on adaptivity in this more general sense. Progress in this field may well be crucial in finally assessing the relative merits of the distributed learning approach as compared with alternative strategies to effectively deal with large data sets.

We sketch an alternative naive approach to adaptivity, based on hold-out in the direct case, where we consider each $f \in \hhh_K$ also as a function in $L^2(\X, \nu)$. 
We split the data $\z \in (\X \times \Y)^n$ into a training and  validation part $\z=(\z^t,\z^v)$ of cardinality $m_t, m_v$. 
We further subdivide $\z^t$ into $m_k$ subsamples, roughly of size $m_t/m_k$, where $m_k \leq m_t, \, k=1,2, \ldots $  is some strictly decreasing sequence. For each $k$ and each subsample 
$\z_j,$  $ 1 \leq j \leq m_k$, 
we define the estimators $\hat f^\lam_{\z_j}$ as in \eqref{def:subset_estimator} and their average 
\begin{equation}  \label{estav}
\bar f^\lam_{k,\z^t} := \frac {1}{m_k} \sum_{j=1}^{m_k} \hat f^\lam_{\z_j}\;.
\end{equation} 
Here, $\lam$ varies in some sufficiently fine lattice $\Lambda$. Then evaluation on $\z^v$ gives the associated empirical $L^2- $ error
\begin{equation}  \label{emperr}
\Err^\lam_k(\z^v):= \frac {1}{m_v} \sum_{i=1}^{ m_v} |y^{v}_i - \bar f^\lam_{k,\z^t}(x^v_i)|^2\;,  \quad \z^v=(\y^v,\x^v)\;, \quad \y^v=(y^v_1, \ldots ,y^v_{m_v})\;,
\end{equation}
leading us to define
\begin{equation}  \label{errk}
\hat \lam_k:=  \mbox{Argmin}_{\lam \in \Lambda}  \Err^\lam_k(\z^v)\;,  \qquad
\Err(k):= \Err_k^{\hat \lam_k}(\z^v). 
\end{equation}
Then, an appropriate stopping criterion for 
$k$ might be to stop at
\begin{equation}  \label{discr}
k^*:= \min \{k \geq 3\; : \;  \Delta(k) \leq \delta \inf_{2 \leq j < k} \Delta(j) \}\;, \qquad
\Delta(j):= |\Err(j) - \Err(j-1)|\;,
\end{equation}
for some $\delta < 1$ (which might require tuning). The corresponding regularization parameter is $\hat \lam = \hat \lam_{k^*}$, given by \eqref{errk}. At least intuitively, it is then reasonable to define a purely data driven estimator as
\begin{equation}  \label{adapt}
\fest:= \bar f^{\hat \lam}_{k^*,\z^t} \;.
\end{equation}
 Note that the training data $\z^t$ enter the definition of $\fest$ via the explicit formula \eqref{estav} encoding our kernel based approach, while $\z^v$ serves to determine
 $(k^*,\hat \lam^*)$ via minimization of the empirical $L^2-$ error and some form of the discrepancy principle, which tells one to stop where $\Err(j)$ does not appreciably improve anymore.
It is open if such a procedure achieves optimal rates, and we have to leave this for future research.



\section{Proofs}

For ease of reading we make use of the following conventions: 
\begin{itemize}
 \item we are interested in a precise dependence of multiplicative constants on the parameters $\sigma, M, R,\eta$, $m, n$ and $p$ 
 \item the dependence of multiplicative constants on various other parameters, including the kernel parameter $\kappa$, the norm 
 parameter $s \in [0, \frac{1}{2}]$, the parameters arising from the regularization method, $b>1$, $\beta>0$, $r>0$,  etc.  will
   (generally) be omitted and simply indicated by the symbol $\blacktriangle$
 \item the value of $C_{\blacktriangle}$ might change from line to line  
 \item the expression ``for $n$ sufficiently large'' means that the statement holds for
   $n\geq n_0$\,, with $n_0$ potentially depending on all model parameters
   (including $\sigma, M$ and $R$), but not on $\eta$\,.
 \end{itemize}

\subsection{Preliminaries}

For proving our error bounds, we recall some results (without proof) from  \cite{optimalratesRLS}.   
We introduce the {\it effective dimension} $\NN (\lam)$, being a measure for the complexity of $\h$ with respect to the marginal distribution $\nu$: 
For $\lambda \in (0,1]$ we set
\begin{equation}
\label{eq:effectivedim}
\NN (\lam) = \tr(\;(\bar T+ \lam)^{-1} \bar T\;) \;.
\end{equation}  
Since the operator $T$ is trace-class,
$\NN(\lam)< \infty$. Moreover, $\NN(\lam)$ satisfies
    \[  \frac{1}{2} \leq \NN(\lam)\leq  \frac{\beta b}{b-1} (\kappa^2\lam) ^{-\frac{1}{b}}\; , 
    \]
provided the marginal distribution $\nu$ of $\X$ belongs to $\priorle(b,\beta)$ with $b>1$ and $\beta>0$  
(see \cite{optimalratesRLS}, Proposition 3).

\begin{prop}[\cite{GuLiZh17}, Proposition 1]
\label{prop:Guo}
Let $x_1, ..., x_n$ be an iid sample, drawn according to $\nu$ on $\X$. 
Define
\begin{equation}
\label{def:blam}
  \B_n(\lam) :=  \left[1 + \left( \frac{2}{n\lam} + \sqrt{\frac{\NN(\lam)}{n\lam}}\right)^2 \right] 
\end{equation}  
For any $\lam >0$,  $\eta \in (0,1]$, with probability at least $1-\eta$ one has 
\begin{equation}
\label{Guo:estimate}
 \norm{ (\bar T_{\x}+\lam)^{-1}(\bar T+\lam) } \leq 
     8\log^2(2\etainv)   \B_n(\lam) \;.
\end{equation}
\end{prop}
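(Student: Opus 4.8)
I would derive Proposition~\ref{prop:Guo} from two ingredients: (a) an operator Bernstein inequality controlling the fluctuation $\bar T-\bar T_{\x}$ measured in the $(\bar T+\lam)^{-1/2}$-preconditioned metric, and (b) a resolvent perturbation step converting this into the bound on $\norm{(\bar T_{\x}+\lam)^{-1}(\bar T+\lam)}$. The whole difficulty sits in step (b).

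\textbf{Step (a): preconditioned concentration.} Write $\bar T-\bar T_{\x}=\frac1n\sum_{i=1}^n(\bar T-\bar T_{x_i})$ with $\bar T_{x_i}=\kappa^{-2}K_{x_i}\otimes K_{x_i}^*$ i.i.d., $\E[\bar T_{x_i}]=\bar T$, and set $A:=(\bar T+\lam)^{-1/2}(\bar T-\bar T_{\x})(\bar T+\lam)^{-1/2}=\frac1n\sum_i\zeta_i$ with $\zeta_i:=(\bar T+\lam)^{-1/2}(\bar T-\bar T_{x_i})(\bar T+\lam)^{-1/2}$, which are centered, self-adjoint and Hilbert--Schmidt. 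Since $\norm{K_x}^2=K(x,x)\le\kappa^2$, the rank-one operator $B_i:=(\bar T+\lam)^{-1/2}\bar T_{x_i}(\bar T+\lam)^{-1/2}$ has $\norm{B_i}=\kappa^{-2}\langle(\bar T+\lam)^{-1}K_{x_i},K_{x_i}\rangle\le\lam^{-1}$, giving the a.s.\ bound $\norm{\zeta_i}\le 1+\lam^{-1}\le 2\lam^{-1}$; and from $B_i^2\preceq\norm{B_i}B_i$ and $\E[\zeta_i^2]\preceq\E[B_i^2]$ one gets $\E[\zeta_i^2]\preceq\lam^{-1}(\bar T+\lam)^{-1/2}\bar T(\bar T+\lam)^{-1/2}$, an operator of norm $\le\lam^{-1}$ and trace $\lam^{-1}\tr((\bar T+\lam)^{-1}\bar T)=\lam^{-1}\NN(\lam)$. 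Feeding the a.s.\ bound $2\lam^{-1}$ and the trace (Hilbert--Schmidt) variance proxy $\lam^{-1}\NN(\lam)$ into the Bernstein-type inequality for i.i.d.\ sums of self-adjoint operators of \cite{optimalratesRLS} (the same tool underlying the displayed estimate on $\NN(\lam)$) yields: with probability at least $1-\eta$,
\[
  \norm{A}\ \le\ C\,\log(2\etainv)\Bigl(\tfrac{1}{n\lam}+\sqrt{\tfrac{\NN(\lam)}{n\lam}}\Bigr)\ \le\ C\,\log(2\etainv)\sqrt{\B_n(\lam)-1}
\]
for a universal constant $C$. The two regimes of the Bernstein tail --- sub-exponential, governed by the a.s.\ bound $\lam^{-1}$, and sub-Gaussian, governed by the trace variance $\lam^{-1}\NN(\lam)$ --- are exactly what enters $\B_n(\lam)$ through a square.

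\textbf{Step (b): from $\norm{A}$ to the resolvent ratio.} On this event, if moreover $\norm{A}<1$, the factorisation $\bar T_{\x}+\lam=(\bar T+\lam)^{1/2}(I-A)(\bar T+\lam)^{1/2}$ gives $(\bar T_{\x}+\lam)^{-1}(\bar T+\lam)=(\bar T+\lam)^{-1/2}(I-A)^{-1}(\bar T+\lam)^{1/2}$. The key is to bound the norm of the right-hand side \emph{without} producing the spurious factor $\norm{(\bar T+\lam)^{-1/2}}=\lam^{-1/2}$ of a blunt submultiplicative estimate: one expands $(I-A)^{-1}=I+A(I-A)^{-1}$, uses $A(\bar T+\lam)^{1/2}=(\bar T+\lam)^{-1/2}(\bar T-\bar T_{\x})$ to trade the outer $(\bar T+\lam)^{1/2}$ for an a.s.-controlled factor, and recognises the remaining sandwiched resolvent as $\norm{(\bar T+\lam)^{1/2}(\bar T_{\x}+\lam)^{-1}(\bar T+\lam)^{1/2}}=\norm{(I-A)^{-1}}\le(1-\norm{A})^{-1}$. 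This produces an estimate $\norm{(\bar T_{\x}+\lam)^{-1}(\bar T+\lam)}\lesssim(1-\norm{A})^{-2}\bigl(1+\norm{A}\,\sqrt{\B_n(\lam)}\bigr)$ (or a variant of this shape); inserting the step (a) bound on $\norm{A}$, and on the complementary $\eta$-event or in the degenerate regime where $n\lam$ is too small for $\norm{A}$ to be forced below $1$ falling back --- after adjusting constants --- on the trivial bound $\norm{(\bar T_{\x}+\lam)^{-1}(\bar T+\lam)}\le\norm{\bar T+\lam}/\lam\le 2/\lam$, which is dominated by $8\log^2(2\etainv)\B_n(\lam)$ since $\B_n(\lam)\ge(2/(n\lam))^2$, yields the claimed inequality; the exponent $2$ on $\log(2\etainv)$ is the square of the logarithmic factor on $\norm{A}$.

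\textbf{Main obstacle.} The hard part is exactly step (b). The unconditioned ratio $\norm{(\bar T_{\x}+\lam)^{-1}(\bar T+\lam)}$ can genuinely be of order $\lam^{-1/2}$ for crude perturbations, and it is only the \emph{preconditioned} smallness of $A$ that rules out the offending spectral configurations (an eigendirection of $\bar T$ with eigenvalue $\gg\lam$ whose empirical mass is depleted). Keeping the factors $(\bar T+\lam)^{\pm1/2}$ attached to $A$ throughout the algebra --- rather than splitting them off --- is what turns ``$\norm{A}$ small'' into a genuine $O(\B_n(\lam))$ bound; getting the constants to collapse to the clean value $8$ is then bookkeeping.
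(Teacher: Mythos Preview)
The paper does not prove this proposition; it is quoted as Proposition~1 of \cite{GuLiZh17} and used as a black box throughout Section~6. So there is no in-paper argument to compare against, and your sketch has to stand on its own.

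Step~(a) is the standard preconditioned Bernstein estimate and is fine. The genuine gap is in step~(b). Your recursion $Q=I+RQ$ with $Q=(\bar T_{\x}+\lam)^{-1}(\bar T+\lam)$ and $R=(\bar T+\lam)^{-1}(\bar T-\bar T_{\x})$ (equivalently, the Neumann expansion of $(I-A)^{-1}$) yields $\norm{Q}\le(1-\norm{R})^{-1}$ only when $\norm{R}<1$, and that bound \emph{blows up} as $\norm{R}\to 1^-$, whereas the target $8\log^2(2\etainv)\B_n(\lam)$ stays of order $\log^2(2\etainv)$ in that transition zone (since there $\log(2\etainv)\delta$ is of order~$1$ and hence so is $\B_n(\lam)$). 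Your proposed fallback to the trivial bound $\norm{Q}\le 2/\lam$ does not rescue this: the inequality $2/\lam\le 8\log^2(2\etainv)\B_n(\lam)$ would need $n^2\lam\lesssim\log^2(2\etainv)$, which is \emph{not} implied by the degenerate regime. Concretely, take $n$ large and $\lam$ small with $n\lam$ of fixed order and $\NN(\lam)$ bounded: then $\delta$ and $\B_n(\lam)$ are of order~$1$, $\log(2\etainv)\delta$ can exceed any fixed threshold, yet $2/\lam$ is arbitrarily large while $8\log^2(2\etainv)\B_n(\lam)$ is not. Neither branch of your case split covers this situation, so the inequality is not established uniformly in $(n,\lam,\eta)$.

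The missing idea is to avoid the Neumann-series singularity altogether and obtain a bound of the shape $\norm{Q}\le\bigl(1+C\log(2\etainv)\delta\bigr)^2$, i.e.\ polynomial in $\delta$ with no $(1-\cdot)^{-1}$ factor; this is precisely why the final estimate carries $\log^2(2\etainv)$ and why the bracket in $\B_n(\lam)$ is \emph{squared}. That step requires controlling the asymmetric ratio through the symmetric ``square-root'' quantity $\norm{(\bar T_{\x}+\lam)^{-1/2}(\bar T+\lam)^{1/2}}$ and then squaring, rather than extracting a factor $(1-\norm{A})^{-1}$ and hoping the constants collapse. Your phrase ``or a variant of this shape'' hides exactly the place where the argument needs work.
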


\begin{cor}
\label{cor:Peter}
Let $\eta \in (0,1)$. For  $n \in \N$ let $\tilde \lam_n$ be implicitly defined as the unique solution of $\NN(\tilde \lam_n) =  n \tilde \lam_n$. Then  for any $\tilde \lam_n \leq \lam \leq 1$ one has
\begin{equation*}
  \B_n(\lam)  \leq    26\;.
\end{equation*}
In particular,
\begin{equation*}
\norm{ (\bar T_{\x}+\lam)^{-1}(\bar T+\lam)} \leq 208 \log^2(2\etainv) \;,
\end{equation*}
with probability at least $1-\eta$.
\end{cor}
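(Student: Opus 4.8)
\textbf{Proof plan for Corollary \ref{cor:Peter}.}
The plan is to show first that the bracketed quantity $\B_n(\lam)$ in \eqref{def:blam} is monotone and then to evaluate it at the smallest admissible value $\lam = \tilde\lam_n$, where by construction $\NN(\tilde\lam_n) = n\tilde\lam_n$. The key observation is that both terms $\frac{2}{n\lam}$ and $\sqrt{\frac{\NN(\lam)}{n\lam}}$ appearing in $\B_n(\lam)$ are nonincreasing in $\lam$. For the first term this is obvious. For the second term, recall that $\NN(\lam) = \tr((\bar T + \lam)^{-1}\bar T) = \sum_j \frac{\eigv_j}{\eigv_j + \lam}$ is nonincreasing in $\lam$, so $\NN(\lam)/(n\lam)$ is a product of two nonincreasing positive functions and hence nonincreasing. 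Consequently $\B_n(\lam) \leq \B_n(\tilde\lam_n)$ for all $\lam \in [\tilde\lam_n, 1]$.

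Next I would bound $\B_n(\tilde\lam_n)$ using the defining relation $\NN(\tilde\lam_n) = n\tilde\lam_n$. This gives $\frac{\NN(\tilde\lam_n)}{n\tilde\lam_n} = 1$, so $\sqrt{\frac{\NN(\tilde\lam_n)}{n\tilde\lam_n}} = 1$. It remains to control $\frac{2}{n\tilde\lam_n} = \frac{2}{\NN(\tilde\lam_n)}$, and since $\NN(\lam) \geq \tfrac12$ always holds (as recalled just before the statement of Proposition \ref{prop:Guo}), we get $\frac{2}{n\tilde\lam_n} \leq 4$. Therefore
\[
\B_n(\tilde\lam_n) = 1 + \left(\frac{2}{n\tilde\lam_n} + 1\right)^2 \leq 1 + (4+1)^2 = 26\;.
\]
Combining with the monotonicity step yields $\B_n(\lam) \leq 26$ for all $\tilde\lam_n \leq \lam \leq 1$.

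Finally, the ``in particular'' statement follows by plugging this bound into Proposition \ref{prop:Guo}: with probability at least $1-\eta$,
\[
\norm{(\bar T_{\x} + \lam)^{-1}(\bar T + \lam)} \leq 8\log^2(2\etainv)\,\B_n(\lam) \leq 8 \cdot 26 \cdot \log^2(2\etainv) = 208\log^2(2\etainv)\;.
\]
I do not expect any real obstacle here; the only point requiring a moment's care is the monotonicity of $\NN(\lam)/(n\lam)$, which must be argued from the series representation of the effective dimension rather than taken for granted, and checking that the lower bound $\NN \geq \tfrac12$ is exactly what makes the constant come out to a clean numerical value.
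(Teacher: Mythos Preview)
Your proposal is correct and follows essentially the same route as the paper's proof: both use that $\NN(\lam)/\lam$ is decreasing to reduce to the value at $\tilde\lam_n$, then combine $\NN(\tilde\lam_n)=n\tilde\lam_n$ with the lower bound $\NN\geq\tfrac12$ to get $\frac{2}{n\tilde\lam_n}\leq 4$ and hence $\B_n\leq 1+(4+1)^2=26$, before invoking Proposition~\ref{prop:Guo}. The only cosmetic difference is that you first establish global monotonicity of $\B_n(\lam)$ and then evaluate at $\tilde\lam_n$, whereas the paper bounds each of the two terms separately for all $\lam\geq\tilde\lam_n$; the ingredients and constants are identical.
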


\begin{proof}[Proof of Corollary \ref{cor:Peter}]
Let $\tilde \lam_n$ be defined via $\NN(\tilde \lam_n) =  n \tilde \lam_n$. Since $\NN(\lam)/\lam$ is decreasing, we have for any $\lam \geq \tilde \lam_n$
\[  \sqrt{\frac{\NN(\lam)}{n\lam}} \leq \sqrt{\frac{\NN(\tilde \lam_n)}{n\tilde \lam_n}} = 1 \;.   \]
Since the effective dimesion is lower bounded by $\frac{1}{2}$, by the inequality above
\[   1 \geq   \sqrt{\frac{\NN(\lam)}{n\lam}} \geq \frac{1}{\sqrt 2 n\lam} \quad \Longrightarrow \quad \frac{1}{n\lam} \leq \sqrt 2 < 2  \]
for any $\lam \geq \tilde \lam_n$. Inserting these bounds into  \ref{Guo:estimate} and noticing that $1 \leq 2\log(2\etainv)$ for any $\eta \in (0,1)$ 
leads to the conclusion. 
\end{proof}


\begin{cor}
\label{lem:for_all}
If $\lam_n$ is defined by \eqref{eq:choicelam:averaged} and if 
\[  m_n\leq n^{\alpha}\;, \qquad \alpha < \frac{2br}{2br+b+1} \;, \]
one has 
\[  \B_{\frac{n}{m_n}}(\lam_n) \leq 2 \;, \]
provided $n$ is sufficiently large. 
\end{cor}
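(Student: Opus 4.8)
The plan is to prove the slightly stronger statement that $\B_{n/m_n}(\lam_n)\to 1$ as $n\to\infty$; since $1<2$, this yields the claim for all $n$ past a threshold which (by our conventions) may depend on $\sigma, R$ and the fixed model parameters. Writing $N:=n/m_n$, in view of the definition \eqref{def:blam} it suffices to show that the two quantities $\frac{2}{N\lam_n}$ and $\frac{\NN(\lam_n)}{N\lam_n}$ both tend to $0$.

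First I would remove the truncation in \eqref{eq:choicelam:averaged}: for $n\ge \sigma^2/R^2$ the minimum is attained by its first argument, so $\lam_n=(\sigma^2/R^2)^{\frac{b}{2br+b+1}}\,n^{-\frac{b}{2br+b+1}}$, i.e. $\lam_n$ equals a fixed $(\sigma,R)$-dependent constant times $n^{-\frac{b}{2br+b+1}}$. Using the hypothesis $m_n\le n^\alpha$, hence $N\ge n^{1-\alpha}$, the first term is at most a constant times $n^{-(1-\alpha)}\lam_n^{-1}$, i.e. at most a constant times $n^{-(1-\alpha-\frac{b}{2br+b+1})}$. The exponent is negative because $\alpha<\frac{2br}{2br+b+1}<1-\frac{b}{2br+b+1}=\frac{2br+1}{2br+b+1}$, whence $\frac{2}{N\lam_n}\to 0$.

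For the second term I would insert the a priori estimate $\NN(\lam)\le\frac{\beta b}{b-1}(\kappa^2\lam)^{-1/b}$ recalled just above (valid since $\nux\in\priorle(b,\beta)$), which gives $\frac{\NN(\lam_n)}{N\lam_n}\le C_{\blacktriangle}\,\frac{m_n}{n}\,\lam_n^{-\frac{b+1}{b}}\le C_{\blacktriangle}\,n^{\alpha-1}\,n^{\frac{b+1}{2br+b+1}}$, using $\frac{b}{2br+b+1}\cdot\frac{b+1}{b}=\frac{b+1}{2br+b+1}$ and absorbing the power of the $(\sigma,R)$-constant into $C_{\blacktriangle}$. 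Since $1-\frac{b+1}{2br+b+1}=\frac{2br}{2br+b+1}$, this equals $C_{\blacktriangle}\,n^{\alpha-\frac{2br}{2br+b+1}}$, whose exponent is negative exactly by the assumption on $\alpha$; hence $\frac{\NN(\lam_n)}{N\lam_n}\to 0$ as well. Combining the two, $\big(\tfrac{2}{N\lam_n}+\sqrt{\tfrac{\NN(\lam_n)}{N\lam_n}}\big)^2\to 0$, so $\B_{n/m_n}(\lam_n)\to 1$ and in particular $\le 2$ once $n$ is large enough.

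I do not expect a genuine obstacle; the argument is just bookkeeping of powers of $n$. The only point worth flagging is that the two summands of $\B_N(\lam)$ call for two different thresholds on $\alpha$, and the binding one is the effective-dimension (``variance-type'') term $\sqrt{\NN(\lam_n)/(N\lam_n)}$, whose exponent vanishes precisely at $\alpha=\frac{2br}{2br+b+1}$; the term $2/(N\lam_n)$ is already negligible under the strictly weaker requirement $\alpha<1-\frac{b}{2br+b+1}$. So the hypothesis of the corollary is exactly what the effective-dimension bound forces.
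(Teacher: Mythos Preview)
Your proof is correct and follows essentially the same approach as the paper: both arguments show that each of the two summands $\frac{2m_n}{n\lam_n}$ and $\sqrt{\frac{\NN(\lam_n)m_n}{n\lam_n}}$ tends to $0$ under the stated bound on $\alpha$, using the effective-dimension estimate $\NN(\lam)\le C_b\lam^{-1/b}$. The only cosmetic difference is that the paper packages the second (binding) term via the identity $\sigma\sqrt{\lam_n^{-1/b}/(n\lam_n)}=R\lam_n^r$ and then verifies $\sqrt{m_n}\,\lam_n^r=o(1)$, whereas you compute the exponent of $n$ directly; the two computations are equivalent. One minor notational point: by the paper's conventions $C_{\blacktriangle}$ should not absorb $(\sigma,R)$-dependent factors, so it would be cleaner to carry the $(\sigma,R)$-dependent constant explicitly and let it be swallowed by the ``$n$ sufficiently large'' clause (which, as you correctly note, is allowed to depend on $\sigma,R$).
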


\begin{proof}[Proof of Lemma \ref{lem:for_all}]
Recall that $\NN(\lam_n) \leq C_b \lam_n^{-\frac{1}{b}}$ and $\sigma \sqrt{ \frac{\lam_n^{-\frac{1}{b}}}{n \lam_n} } = R \lam_n^r$.  
Using the definition of $\lam_n$ in \eqref{eq:choicelam:averaged} yields
\[ \frac{2m_n}{n\lam} = o\paren{ \sqrt m_n \lam_n^r }\;, \]
provided
\[  m_n\leq n^{\alpha}\;, \qquad \alpha < \frac{2(br+1)}{2br+b+1} \;. \]
Finally,  $\sqrt m_n \lam_n^r = o(1)$ if 
\[  m_n\leq n^{\alpha}\;, \qquad \alpha < \frac{2br}{2br+b+1} \;. \]
\end{proof}


We shortly illustrate how Corollary \ref{cor:Peter} and Proposition \ref{prop:Guo} will be used. Let $u \in [0, 1]$, 
$\tilde \lam_n \leq \lam$ as above and $f \in \h$. We have 
\begin{align}
\norm{\bar T^uf}_{\h} &= \norm  {\bar T^u (\bar T+\lam)^{-u}(\bar T+\lam)^u(\bar T_{\x}+\lam)^{-u} (T_{\x}+\lam)^uf  }_{\h} \nonumber  \\
&\leq   \norm{\bar T^u (\bar T+\lam)^{-u}} \norm{(\bar T+\lam)^u(\bar T_{\x}+\lam)^{-u}}    \norm{ (\bar T_{\x}+\lam)^uf  }_{\h} \nonumber  \\
&\leq  8\log^{2u}(2\etainv) \B_{n}(\lam)^u    \norm{ (\bar T_{\x}+\lam)^uf  }_{\h} \label{need} \;, 
\end{align}
with probability at least $1-\eta$, for any $\eta \in (0,1)$. 
In particular, for any $\tilde \lam_n \leq \lam$ (with $\tilde \lam_n$ as in Corollary \ref{cor:Peter})
\begin{equation}
\label{eq:MOD}
\norm{\bar T^uf}_{\h}  \leq    208^u\log^{2u}(2\etainv) \norm{ ( \bar T_{\x}+\lam)^uf  }_{\h}\;, 
\end{equation}
with probability at least $1-\eta$.
\\
\\
In the following, we constantly use \eqref{eq:MOD}.


\subsection{{\bf Approximation Error Bound}}

Recall that $\nu$ denotes the input sampling distribution and ${\cal P}$ the set of all probability distributions on the input space $\X$.


\begin{lem}
\label{lem:approx1}
Let $\nu \in {\cal P}$, $v \in \R$ and let $\x \in \X^{\frac{n}{m}}$ be an iid sample, drawn according to $\nu$.   
Assume the regularization $(g_{\lam})_{\lam}$ has qualification $q \geq v+1+s$. 
Then with  probability at least $1-\eta$
\begin{equation*}
\norm{ \bar T^s r_{\lam}(\bar T_{\x})\bar T_{\x}^v(\bar T - \bar T_{\x}) }_{\h} \leq C_{\blacktriangle}\log^{4}(4\etainv) \lam^{s+v+1}\B^{s+1}_{\frac{n}{m}}(\lam) \paren{ \frac{2m}{n\lam} + \sqrt{\frac{m\NN(\lam)}{n\lam}}}
\end{equation*}
for some $C_{\blacktriangle} < \infty $.
\end{lem}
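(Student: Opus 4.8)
The plan is to decompose the operator norm by inserting powers of $(\bar T_{\x} + \lambda)$ to convert the empirical operator $\bar T_{\x}$ appearing in $r_\lambda(\bar T_{\x})\bar T_{\x}^v$ into terms controlled by the population operator $\bar T$, at the cost of the factors $\B_{n/m}(\lambda)$ coming from Proposition \ref{prop:Guo} / inequality \eqref{eq:MOD}. Concretely, I would first write
\[
\bar T^s r_{\lambda}(\bar T_{\x})\bar T_{\x}^v(\bar T - \bar T_{\x})
= \bar T^s (\bar T_{\x}+\lambda)^{-s}\,(\bar T_{\x}+\lambda)^{s} r_{\lambda}(\bar T_{\x})\bar T_{\x}^v (\bar T - \bar T_{\x}),
\]
and bound $\norm{\bar T^s(\bar T_{\x}+\lambda)^{-s}}$ via \eqref{eq:MOD} (transposed, i.e. using that $\norm{\bar T^s (\bar T_{\x}+\lambda)^{-s}} = \norm{(\bar T_{\x}+\lambda)^{-s}\bar T^s}$ up to the same constant), which contributes $208^s \log^{2s}(2\eta^{-1})$ and folds into $\B^{s}_{n/m}(\lambda)$ together with the log factors. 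The remaining piece $(\bar T_{\x}+\lambda)^{s} r_{\lambda}(\bar T_{\x})\bar T_{\x}^v (\bar T - \bar T_{\x})$ is handled by the qualification bound \eqref{eq:quali}: since $q \geq v+1+s$, the scalar function $t \mapsto (t+\lambda)^{s} r_\lambda(t) t^{v}$ is bounded by $C_{\blacktriangle}\lambda^{s+v+1}/(t+\lambda)$ on $(0,1]$ (split $(t+\lambda)^s \le 2^s(t^s + \lambda^s)$ and apply \eqref{eq:quali} with exponents $v+s$ and $v$ respectively, then absorb one extra $\lambda$ to produce the $(t+\lambda)^{-1}$); by the spectral theorem this gives $\norm{(\bar T_{\x}+\lambda)^{s} r_{\lambda}(\bar T_{\x})\bar T_{\x}^v (\bar T_{\x}+\lambda)^{-1}} \le C_{\blacktriangle}\lambda^{s+v+1}$.

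Next I would absorb the difference $\bar T - \bar T_{\x}$. Write
\[
r_\lambda(\bar T_{\x})\bar T_{\x}^v(\bar T - \bar T_{\x})
= \big[(\bar T_{\x}+\lambda)^{s} r_\lambda(\bar T_{\x})\bar T_{\x}^v(\bar T_{\x}+\lambda)^{-1}\big]\,(\bar T_{\x}+\lambda)^{1-s}(\bar T - \bar T_{\x}),
\]
wait — more cleanly, I would keep the $(\bar T_{\x}+\lambda)^{-1}$ next to $(\bar T - \bar T_{\x})$ and estimate $\norm{(\bar T_{\x}+\lambda)^{-1}(\bar T - \bar T_{\x})}$, or rather symmetrize: $(\bar T_{\x}+\lambda)^{-1/2}(\bar T-\bar T_{\x})(\bar T+\lambda)^{-1/2}$ type quantities. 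The standard route is a concentration bound for $\norm{(\bar T+\lambda)^{-1/2}(\bar T - \bar T_{\x})(\bar T+\lambda)^{-1/2}}$, which is $O\!\big(\tfrac{2}{n'\lambda} + \sqrt{\tfrac{\NN(\lambda)}{n'\lambda}}\big)$ with high probability, $n' = n/m$; combined with the operator-norm transfer $\norm{(\bar T+\lambda)^{1/2}(\bar T_{\x}+\lambda)^{-1/2}} \le \sqrt{\B_{n/m}(\lambda)}\,\cdot\,(\text{const}\log)$ from Proposition \ref{prop:Guo}, this yields the factor $\B^{1/2}_{n/m}(\lambda)\big(\tfrac{2m}{n\lambda} + \sqrt{\tfrac{m\NN(\lambda)}{n\lambda}}\big)$ — note the $m$ appears because $n' = n/m$, so $\tfrac{1}{n'\lambda} = \tfrac{m}{n\lambda}$ and $\tfrac{\NN(\lambda)}{n'\lambda} = \tfrac{m\NN(\lambda)}{n\lambda}$. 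Collecting exponents: the $\B$-power is $s$ (from the $\bar T^s$ transfer) plus $1/2$ (from one side of the $\bar T - \bar T_{\x}$ transfer) plus $1/2$ (from the other side) $= s+1$, matching the claimed $\B^{s+1}_{n/m}(\lambda)$, and the logarithmic factors accumulate to $\log^4(4\eta^{-1})$ after a union bound over the two high-probability events (each costing $\eta/2$, hence the $4\eta^{-1}$).

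The main obstacle I anticipate is bookkeeping rather than conceptual: correctly splitting $(\bar T_{\x}+\lambda)^s$ and distributing it so that (i) the qualification inequality \eqref{eq:quali} applies with a valid exponent $\le q$, and (ii) exactly one surplus power of $\lambda/(t+\lambda)$ is left over to pair with $\bar T - \bar T_{\x}$ and produce the bracketed small factor. One must be careful that the intermediate scalar functions are genuinely bounded on $(0,1]$ (using $\norm{\bar T_{\x}}\le 1$, $\norm{\bar T}\le 1$), and that each application of the operator-perturbation transfer \eqref{eq:MOD} / Proposition \ref{prop:Guo} is on the event $\tilde\lambda_{n/m}\le \lambda$, which holds for $n$ large by the choice of $\lambda_n$ — though as stated the lemma is for general $\lambda$, so I would instead invoke Proposition \ref{prop:Guo} directly with its explicit $\B_{n/m}(\lambda)$ and not assume $\lambda\ge\tilde\lambda_{n/m}$. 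Everything else is routine spectral calculus and a union bound.
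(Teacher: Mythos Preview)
Your plan is correct and essentially coincides with the paper's proof: factor as $[\bar T^s(\bar T_{\x}+\lambda)^{-s}]\cdot[(\bar T_{\x}+\lambda)^{s} r_\lambda(\bar T_{\x})\bar T_{\x}^v(\bar T_{\x}+\lambda)]\cdot[(\bar T_{\x}+\lambda)^{-1}(\bar T-\bar T_{\x})]$, bound the first factor via \eqref{need} (cost $\B^s_{n/m}(\lambda)$), the middle by $C_\blacktriangle\lambda^{s+v+1}$ from qualification $q\ge s+v+1$, and the last by transfer plus concentration. The only cosmetic difference is that the paper handles the last bracket with the \emph{asymmetric} bound $\norm{(\bar T+\lambda)^{-1}(\bar T-\bar T_{\x})}$ from Proposition~\ref{Geta2} after one more transfer $(\bar T_{\x}+\lambda)^{-1}(\bar T+\lambda)$ via Proposition~\ref{prop:Guo} (contributing the remaining $\B^{1}_{n/m}(\lambda)$), rather than your symmetric $(\bar T+\lambda)^{-1/2}(\bar T-\bar T_{\x})(\bar T+\lambda)^{-1/2}$ detour --- this is cleaner bookkeeping and directly yields $\B^{s+1}_{n/m}(\lambda)$ and $\log^{2(s+1)}\cdot\log \le \log^4$ (using $s\le\tfrac12$).
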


\begin{proof}[Proof of Lemma \ref{lem:approx1}]
From \eqref{need} and from Proposition \ref{Geta2}, since $q \geq s+v+1$, one has 
\begin{align*}
\norm{ \bar T^s r_{\lam}(\bar T_{\x})\bar T_{\x}^v(\bar T - \bar T_{\x}) }_{\h} &\leq C_{\blacktriangle}\log^{2(s+1)}(4\etainv)\B^{s+1}_{\frac{n}{m}}(\lam) \\
   & \qquad \norm{(\bar T_{\x} + \lam)^{s} r_{\lam}(\bar T_{\x})\bar T_{\x}^{v}(\bar T_{\x} + \lam)   } 
     \norm{(\bar T + \lam)^{-1}(\bar T - \bar T_{\x})}  \\
     &\leq C_{\blacktriangle}\log^{4}(4\etainv) \lam^{s+v+1}\B^{s+1}_{\frac{n}{m}}(\lam) \paren{ \frac{2m}{n\lam} + \sqrt{\frac{m\NN(\lam)}{n\lam}}}\;,
\end{align*}  
for any $\lam \in (0,1]$, $\eta \in (0,1]$, with probability at least $1-\eta$. We also used that $s \leq \frac{1}{2}$. 
\end{proof}


\begin{lem}
\label{lem:approx2}
Let $\nu \in {\cal P}$, $v \in \R$ and let $\x \in \X^{\frac{n}{m}}$ be an iid sample, drawn according to $\nu$. 
Assume the regularization $(g_{\lam})_{\lam}$ has qualification $q \geq v+s$. 
Then for any $\lam \in (0,1]$, $\eta \in (0,1]$, with probability at least $1-\eta$
\begin{equation*}
\norm{ \bar T^s r_{\lam}(\bar T_{\x})\bar T_{\x}^v } \leq C_{\blacktriangle}\log^{2s}(2\etainv)\B^s_{\frac{n}{m}}(\lam)\lam^{s+v}  \;,
\end{equation*}
for some $C_{\blacktriangle} < \infty $.
\end{lem}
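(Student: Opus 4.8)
The plan is to mirror the two-step structure of the proof of Lemma~\ref{lem:approx1}, but the argument here is considerably shorter: there is no operator-difference factor $(\bar T-\bar T_{\x})$ to control, so after one probabilistic step only a deterministic spectral estimate on $\bar T_{\x}$ is required. Throughout I may take $v\ge 0$, which is the only case needed in the sequel.

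\emph{Step 1: replacing $\bar T$ by $\bar T_{\x}$.} I would apply the estimate \eqref{need} with exponent $u=s$ and with sample size $n/m$ (since $\x\in\X^{n/m}$ here), which gives, with probability at least $1-\eta$,
\[
\norm{\bar T^s r_{\lam}(\bar T_{\x})\bar T_{\x}^v}\;\le\;8\log^{2s}(2\etainv)\,\B_{\frac{n}{m}}(\lam)^{s}\,\norm{(\bar T_{\x}+\lam)^{s}r_{\lam}(\bar T_{\x})\bar T_{\x}^v}\,.
\]
After this step the randomness has been entirely extracted, and the remaining operator is a function of $\bar T_{\x}$ alone.

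\emph{Step 2: scalar spectral estimate.} Since $\bar T_{\x}$ is self-adjoint, positive, and $\norm{\bar T_{\x}}\le 1$, the spectral theorem reduces the remaining operator norm to a scalar supremum,
\[
\norm{(\bar T_{\x}+\lam)^{s}r_{\lam}(\bar T_{\x})\bar T_{\x}^v}\;\le\;\sup_{0<t\le 1}(t+\lam)^{s}\,|r_{\lam}(t)|\,t^{v}\,,
\]
and it remains to show this is at most $C_{\blacktriangle}\lam^{s+v}$. Using subadditivity of $t\mapsto t^{s}$ for $s\in[0,1]$, I would bound $(t+\lam)^{s}\le t^{s}+\lam^{s}$, hence
\[
(t+\lam)^{s}|r_{\lam}(t)|\,t^{v}\;\le\;|r_{\lam}(t)|\,t^{s+v}\;+\;\lam^{s}\,|r_{\lam}(t)|\,t^{v}\,.
\]
The first summand is bounded by $\gamma_{s+v}\lam^{s+v}$ directly from the qualification inequality \eqref{eq:quali} together with the hypothesis $q\ge v+s$; the second is bounded by $\lam^{s}\cdot\gamma_{v}\lam^{v}=\gamma_{v}\lam^{s+v}$ by applying \eqref{eq:quali} at the exponent $v\le q$ (qualification at level $q$ entails qualification at every level in $(0,q]$, with constant $\max(\gamma_0,\gamma_q)$). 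Thus the supremum is at most $(\gamma_{s+v}+\gamma_{v})\lam^{s+v}$.

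\emph{Conclusion and difficulty.} Chaining the two steps and absorbing the factor $8$ together with $\gamma_{s+v}$ and $\gamma_{v}$ into $C_{\blacktriangle}$ yields precisely the asserted bound. I do not expect any genuine obstacle in this lemma; the only points needing care are bookkeeping ones, namely invoking \eqref{need} at the subsample size $n/m$ so that $\B_{\frac{n}{m}}$ rather than $\B_{n}$ appears, and retaining the full power $\lam^{s+v}$ instead of a weaker $\lam^{\min(s+v,\,s)}$ — which is exactly the place where the qualification hypothesis $q\ge v+s$ is used.
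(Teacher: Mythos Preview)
Your proposal is correct and follows essentially the same approach as the paper: the paper's proof invokes \eqref{need} to replace $\bar T^s$ by $(\bar T_{\x}+\lam)^s$ and then appeals directly to the qualification hypothesis $q\ge v+s$ to bound $\norm{(\bar T_{\x}+\lam)^s r_\lam(\bar T_{\x})\bar T_{\x}^v}\le C_\blacktriangle\lam^{s+v}$, without spelling out the scalar estimate. Your Step~2 simply makes this last inequality explicit via the split $(t+\lam)^s\le t^s+\lam^s$, which is a perfectly valid way to justify it; your restriction to $v\ge 0$ is also implicit in the paper's argument, since qualification at nonnegative exponents is what is actually available and used.
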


\begin{proof}[Proof of Lemma \ref{lem:approx2}]
Using  \eqref{need}, since $q \geq v+s$
\begin{align*}
\norm{ \bar T^s r_{\lam}(\bar T_{\x})\bar T_{\x}^v } &\leq  C_{\blacktriangle}\log^{2s}(2\etainv)\B^s_{\frac{n}{m}}(\lam)
                     \norm{ (\bar T_{\x} + \lam)^s r_{\lam}(\bar T_{\x})\bar T_{\x}^v }  \\
                  &\leq  C_{\blacktriangle}\log^{2s}(2\etainv)\B^s_{\frac{n}{m}}(\lam)\lam^{s+v} \;,   
\end{align*}
with probability at least $1-\eta$. 
\end{proof}


\begin{prop}[Expectation of Approximation Error]
\label{prop:expec_Approx}
Let $\fo \in \Omega_{\nu}(r, R)$, $\lam \in (0,1]$ and let $\B_{\frac{n}{m}}(\lam)$ be defined in \eqref{def:blam}. Assume the regularization has qualification $q \geq r+s$. For any $p \geq 1$ one has:
\begin{enumerate}
\item If $r \leq 1$, then
\[ \E_{\rho^{\otimes n}}
     \Big[\big\|\bar T^s( \fo - \tilde f_{D}^{\lam})\big\|^p_{\h}\Big]^{\frac{1}{p}} 
        \leq   C_p R \; \lam^{s+r}\; \B^{s+r}_{\frac{n}{m}}(\lam) \;. \] 
\item If $r >1$, then 
\[ \E_{\rho^{\otimes n}}
     \Big[\big\|\bar T^s( \fo - \tilde f_{D}^{\lam})\big\|^p_{\h}\Big]^{\frac{1}{p}} \leq   C_{p}R \lam^s \B^{s+1}_{\frac{n}{m}}(\lam)\paren{\lam^{r} + \lam  \paren{ \frac{2m}{n\lam} + \sqrt{\frac{m\NN(\lam)}{n\lam}}}  
         } \;.\] 
\end{enumerate}
In 1. and 2. the constant $ C_{p}$ does not depend on $(\sigma, M,R)\in \R^3_+$.
\end{prop}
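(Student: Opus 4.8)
The plan is to start from the explicit formula for the approximation‐error term in the bias–variance decomposition \eqref{eq:decompo}, namely
\[
\bar T^s(\fo - \tilde f_D^{\lam}) = \frac{1}{m}\sum_{j=1}^m \bar T^s r_{\lam}(\bar T_{\x_j})\fo,
\]
and to observe that since each block $\x_j$ has the same distribution, the $L^p(\rho^{\otimes n})$-norm of this average is bounded by the $L^p$-norm of a single summand $\bar T^s r_{\lam}(\bar T_{\x_1})\fo$ (by the triangle inequality in $L^p$, the blocks need not even be independent for an upper bound). So it suffices to bound $\E[\|\bar T^s r_{\lam}(\bar T_{\x})\fo\|_{\h}^p]^{1/p}$ for an i.i.d. sample $\x$ of size $n/m$. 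Using the source condition $\fo = \bar T_{\nu}^r h$ with $\norm{h}_{\h}\le R$, one writes $\bar T^s r_{\lam}(\bar T_{\x})\bar T^r h$ and the goal is to transfer the power $\bar T^r$ onto $\bar T_{\x}^r$ at the cost of a remainder involving $\bar T - \bar T_{\x}$.

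For the case $r\le 1$ I would use the operator identity that lets one replace $\bar T^r$ by $\bar T_{\x}^r$ up to a term controlled by $\norm{\bar T - \bar T_{\x}}$ (an operator-monotonicity / Löwner-type inequality for $t\mapsto t^r$ with $r\le 1$, together with the decomposition already visible in the statements of Lemma \ref{lem:approx1} and Lemma \ref{lem:approx2}). Roughly: $\bar T^s r_{\lam}(\bar T_{\x})\bar T^r$ splits as $\bar T^s r_{\lam}(\bar T_{\x})\bar T_{\x}^r$ plus a cross term of the form $\bar T^s r_{\lam}(\bar T_{\x})\bar T_{\x}^{v}(\bar T - \bar T_{\x})$ for suitable $v$. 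The first piece is handled by Lemma \ref{lem:approx2} (with $v=r$, using $q\ge r+s$), giving a clean $\lam^{s+r}\B^s$ bound; the second by Lemma \ref{lem:approx1}, giving $\lam^{s+v+1}\B^{s+1}(\ldots)$. One then needs the moment bound: $\norm{\bar T - \bar T_{\x}}$, and more precisely the quantity $\frac{2m}{n\lam}+\sqrt{m\NN(\lam)/(n\lam)}$, is dominated (in the relevant range, using $\lam=\lam_n$ and the bound $\NN(\lam)\le C_b\lam^{-1/b}$) so that the cross term is of the same order as, or smaller than, the main term $\lam^{s+r}$. Collecting, one gets the bound $C_p R\,\lam^{s+r}\B^{s+r}_{n/m}(\lam)$ claimed in part 1 (the exponent $s+r$ on $\B$ rather than $s$ absorbs the cross-term contributions cleanly since $\B\ge 1$). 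For $r>1$, operator monotonicity of $t\mapsto t^r$ fails, so instead I would peel off one factor $\bar T_{\x}^{r-1}\bar T^{1}$, bound $\bar T^1$ against $\bar T_{\x}^1$ plus $(\bar T-\bar T_{\x})$ using the $r=1$ (Lipschitz) case, and keep the leftover $\bar T^{r-1}$ under the regularization's qualification — this is exactly why part 2 has the structure $\lam^{r} + \lam(\frac{2m}{n\lam}+\sqrt{m\NN(\lam)/(n\lam)})$ inside the bracket, the first summand being the genuine bias and the second the perturbation cost.

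The passage from high-probability statements (Lemma \ref{lem:approx1}, Lemma \ref{lem:approx2}, and \eqref{eq:MOD}, all of which carry $\log$-powers of $\eta^{-1}$ and the event $\{\tilde\lam_{n/m}\le\lam\}$ via $\B_{n/m}$) to an $L^p$-in-expectation bound is the step I expect to be the main technical obstacle. The standard device is: on the good event of probability $\ge 1-\eta$ use the deterministic-looking bound with its $\log^{c}(\eta^{-1})$ factor, on the complement use a crude deterministic bound (here $\norm{\bar T^s r_\lam(\bar T_\x)\fo}\le C R$ since $\norm{\bar T^s}\le 1$, $\sup_t|r_\lam(t)|\le\gamma_0$, $\norm{\bar T_\x^r}\le 1$), then integrate $\int_0^1 \log^{cp}(\eta^{-1})\,d\eta <\infty$ after optimizing $\eta$ (typically $\eta$ polynomially small in $n$ suffices to kill the complement). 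This yields the $p$-dependent constant $C_p$ with the $\log$ factors absorbed, and — crucially — $C_p$ depends on none of $\sigma,M,R$ because the deterministic fallback bound scales linearly in $R$ and the good-event bound is already linear in $R$ via $\norm{h}\le R$. I would also note that $\B_{n/m}(\lam)$ must be kept explicit in the statement (not yet bounded by a constant) precisely because Corollary \ref{lem:for_all} — which controls $\B_{n/m}(\lam_n)$ — needs the $\alpha$-restriction, and that restriction is imposed only later in Theorem \ref{prop:approx_error}; here we want the cleanest possible $\lam$-uniform statement.
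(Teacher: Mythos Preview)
Your overall framework is right --- triangle inequality in $L^p$ to reduce to a single block, then the source condition, then integrating high-probability bounds to moments --- and your description of the integration step matches the paper. But the operator-level arguments in both cases diverge from the paper and, as written, have gaps.

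For $r\le 1$: the paper does \emph{not} split $\bar T^r$ into $\bar T_{\x}^r$ plus a cross term, and indeed no identity of the shape $\bar T^r-\bar T_{\x}^r = \bar T_{\x}^v(\bar T-\bar T_{\x})(\cdots)$ is available for non-integer $r<1$; operator monotonicity is not the relevant tool either, since there is no ordering $\bar T_{\x}\le\bar T$. Instead the paper applies the sandwich trick \eqref{need} on \emph{both} sides at once, using the Cordes inequality (Proposition \ref{prop:Bath-ineq}), which is applicable precisely because $r\le 1$:
\[
\norm{\bar T^s r_{\lam}(\bar T_{\x})\bar T^r}
\;\le\; C_{\blacktriangle}\log^{2(s+r)}(4\etainv)\,\B_{\frac{n}{m}}(\lam)^{s+r}\,
\norm{(\bar T_{\x}+\lam)^s r_{\lam}(\bar T_{\x})(\bar T_{\x}+\lam)^r}
\;\le\; C_{\blacktriangle}\log^{3}(4\etainv)\,\B_{\frac{n}{m}}^{s+r}(\lam)\,\lam^{s+r}.
\]
This is where the exponent $s+r$ on $\B$ really comes from --- one factor $\B^s$ from the left and one factor $\B^r$ from the right via Cordes --- not from ``absorbing cross-term contributions''. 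Lemma \ref{lem:approx1} and Lemma \ref{lem:approx2} are not invoked at all in this case.

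For $r>1$: Cordes fails, so one cannot transfer the full power $r$ across the resolvents in one shot. Your ``peel off one factor'' is the right instinct, but as stated it only covers $1<r\le 2$, and the expression $\bar T_{\x}^{r-1}\bar T$ you write does not arise from any legitimate manipulation (the non-commuting $\bar T^{r-1}$ would end up trapped between $r_{\lam}(\bar T_{\x})$ and $\bar T_{\x}$, where qualification cannot be used). The paper instead writes $r=k+u$ with $k=\lfloor r\rfloor$, $u\in[0,1)$, and uses the telescoping identity
\[
\bar T^k=\sum_{l=0}^{k-1}\bar T_{\x}^l(\bar T-\bar T_{\x})\bar T^{k-(l+1)}+\bar T_{\x}^{k},
\]
which places the powers $\bar T_{\x}^l$ \emph{adjacent} to $r_{\lam}(\bar T_{\x})$ (so Lemma \ref{lem:approx1} applies with $v=l$ and qualification $q\ge s+l+1$), and pushes the remaining factor $\bar T^{k-(l+1)+u}$ to the far right, where it is simply bounded by $1$. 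The surviving diagonal term $\bar T^s r_{\lam}(\bar T_{\x})\bar T_{\x}^{k}\bar T^u$ is handled by Lemma \ref{lem:approx2} together with Cordes for the fractional exponent $u<1$. Summing the $k$ cross terms and using $\lam^l\le 1$ produces exactly the $\lam\bigl(\frac{2m}{n\lam}+\sqrt{m\NN(\lam)/(n\lam)}\bigr)$ contribution in part 2.
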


\begin{proof}[Proof of Proposition \ref{prop:expec_Approx}]
Since $\fo \in \Omega_{\nu}(r, R)$
\begin{align}
\label{eq:deco5}
 \E_{\rho^{\otimes n}}
     \Big[\big\|\bar T^s( \fo - \tilde f_{D}^{\lam})\big\|^p_{\h}\Big]^{\frac{1}{p}} &= 
   \E_{\rho^{\otimes n}}\Big[ \big\|  \frac{1}{m}\sum_{j=1}^m \bar T^sr_{\lam}(\bar T_{\x_j})\fo  \big\|_{\h} ^p  \Big]^{\frac{1}{p}}   \nonumber   \\
   &\leq   \frac{1}{m}   \sum_{j=1}^m        \E_{\rho^{\otimes n}}\Big[ \big\|   \bar T^sr_{\lam}(\bar T_{\x_j})\fo    \big\|_{\h} ^p  \Big]^{\frac{1}{p}}        \nonumber \\
 &\leq \frac{R}{m}  \sum_{j=1}^m \E_{\rho^{\otimes n}}\Big[ \big\|\bar T^sr_{\lam}(\bar T_{\x_j})\bar T^{r}   \big\|_{\h}^p  \Big]^{\frac{1}{p}} \;.
\end{align}
The first inequality is just the triangle inequality for the $p$- norm $||f||_p = \E[||f||^p_{\h} ]^{\frac{1}{p}}$.  
We bound the expectation for each separate subsample of size $\frac{n}{m}$ by first deriving a probabilistic estimate and then we integrate.

Consider first the case where $r \leq 1$. Using \eqref{need} and Cordes Inequality Proposition \ref{prop:Bath-ineq}\;, one has for any $j = 1,...,m$
\begin{align*}
 \norm{\bar T^sr_{\lam}(\bar T_{\x_j})\bar T^{r}} &\leq C_{\blacktriangle}\log^{2(s+r)}(4\etainv) \B^{s+r}_{\frac{n}{m}}(\lam) 
   \norm{(\bar T_{\x_j}+\lam)^s r_{\lam}(\bar T_{\x_j})(\bar T_{\x_j}+\lam)^{r}}  \\
  &\leq C_{\blacktriangle}\log^{3}(4\etainv) \lam^{s+r}\B^{s+r}_{\frac{n}{m}}(\lam)\;,  
\end{align*}
with probability at least $1-\eta$ and where $\B^{s+r}_{\frac{n}{m}}(\lam)$ is defined in \eqref{def:blam}. Recall that the regularization has qualification $q\geq r+s$. 
By integration one has
\begin{align*}
\E_{\rho^{\otimes n}}\Big[ \big\|\bar T^sr_{\lam}(\bar T_{\x_j})\bar T^{r}   \big\| ^p  \Big]^{\frac{1}{p}} &\leq C_{\blacktriangle, p} \;
    \lam^{s+r}\; \B^{s+r}_{\frac{n}{m}}(\lam)\;,
\end{align*}
for some $C_{\blacktriangle, p} < \infty$, not depending on $\sigma,M,R$. 
Finally, from \eqref{eq:deco5} 
\[ \E_{\rho^{\otimes n}}
     \Big[\big\|\bar T^s( \fo - \tilde f_{D}^{\lam})\big\|^p_{\h}\Big]^{\frac{1}{p}} 
        \leq   C_{\blacktriangle, p} R \; \lam^{s+r}\; \B^{s+r}_{\frac{n}{m}}(\lam) \;. \]

In the case where $r \geq 1$, we write $r=k+u$, with $k=\lfloor r \rfloor$ and $u=r-k<1 $. We shall use the decomposition
\begin{equation}
\label{eq:decomp}
\bar T^k = \sum_{l=0}^{k-1}\bar T_{\x}^l(\bar T -\bar T_{\x}) \bar T^{k-(l+1)}+ \bar T_{\x}^{k} \;.
\end{equation}
We proceed by bounding \eqref{eq:deco5} according to decomposition \eqref{eq:decomp}\;. 
For any $j = 1,...m$, one has 
\begin{align}
\label{eq:deco3}
 \E_{\rho^{\otimes n}}\Big[ \big\| \bar T^sr_{\lam}(\bar T_{\x_j})\bar T^{k+u}   \big\| ^p  \Big]^{\frac{1}{p}} &\leq 
      \sum_{l=0}^{k-1} \E_{\rho^{\otimes n}}\Big[ \big\| \bar  T^sr_{\lam}(\bar T_{\x_j})\bar T_{\x_j}^l(\bar T - \bar T_{\x_j})\bar T^{k-(l+1)+u}  \big\| ^p  \Big]^{\frac{1}{p}}  \nonumber \\ 
      & \qquad + \E_{\rho^{\otimes n}}\Big[ \big\|\bar  T^sr_{\lam}(\bar T_{\x_j})\bar T_{\x_j}^{k}\bar T^u   \big\| ^p  \Big]^{\frac{1}{p}}  \nonumber \\
  &\leq \sum_{l=0}^{k-1} \E_{\rho^{\otimes n}}\Big[ \big\| \bar  T^sr_{\lam}(\bar T_{\x_j})\bar T_{\x_j}^l(\bar T - \bar T_{\x_j})  \big\| ^p  \Big]^{\frac{1}{p}}     \nonumber \\ 
& \qquad   + \E_{\rho^{\otimes n}}\Big[ \big\|\bar  T^sr_{\lam}(\bar T_{\x_j})\bar T_{\x_j}^{k}\bar T^u   \big\| ^p  \Big]^{\frac{1}{p}} \;.
\end{align} 
Here we use that $\bar T^{k-(l+1)+u}$ is bounded by $1$. 
By Lemma \ref{lem:approx2} and by \eqref{need}, with probability at least $1-\eta$
\begin{align*}
\norm{\bar  T^sr_{\lam}(\bar T_{\x_j})\bar T_{\x_j}^{k}\bar T^u} \leq   C_{\blacktriangle}\log^{2(s+u)}(2\etainv)\B^{s+u}_{\frac{n}{m}}(\lam)\lam^{s+r} 
\end{align*}
and thus integration yields
\begin{equation}
\label{eq:another1}
\E_{\rho^{\otimes n}}\Big[ \big\|\bar  T^sr_{\lam}(\bar T_{\x_j})\bar T_{\x_j}^{r} \bar T^u  \big\| ^p  \Big]^{\frac{1}{p}} \leq 
  C_{\blacktriangle, p}\B^{s+u}_{\frac{n}{m}}(\lam)\lam^{s+r} \;.
\end{equation}

For estimating the first term in \eqref{eq:deco3} we may use Lemma \ref{lem:approx1}. 
For any $l=0,...,k-1$, $j=1,...,m$ with probability at least $1-\eta$
\begin{small}
\begin{align*}
\norm{ \bar  T^sr_{\lam}(\bar T_{\x_j})\bar T_{\x_j}^l(\bar T - \bar T_{\x_j})   } &\leq 
C_{\blacktriangle}\log^{4}(8\etainv) \lam^{s+l+1}\B^{s+1}_{\frac{n}{m}}(\lam) \paren{ \frac{2m}{n\lam} + \sqrt{\frac{m\NN(\lam)}{n\lam}}} \;.
\end{align*}
\end{small}
Again by integration, since $\lam^l \leq 1$ for any $l =0, ..., k-1$, one has
\begin{small}
\begin{equation}
\label{eq:another2}
\sum_{l=0}^{k-1} \E_{\rho^{\otimes n}}\Big[ \big\| \bar  T^sr_{\lam}(\bar T_{\x_j})\bar T_{\x_j}^l(\bar T - \bar T_{\x_j})  \big\| ^p  \Big]^{\frac{1}{p}} \leq C_{\blacktriangle, p}
       \lfloor r \rfloor  \lam^{s+1} \B^{s+1}_{\frac{n}{m}}(\lam) \paren{ \frac{2m}{n\lam} + \sqrt{\frac{m\NN(\lam)}{n\lam}}} \;.
\end{equation}
\end{small}
Finally, combining \eqref{eq:another1} and \eqref{eq:another2} with \eqref{eq:deco5}  gives in the case where $r >1$ 
\[ \E_{\rho^{\otimes n}}
     \Big[\big\|\bar T^s( \fo - \tilde f_{D}^{\lam})\big\|^p_{\h}\Big]^{\frac{1}{p}} \leq   C_{\blacktriangle}\lam^s\B^{s+1}_{\frac{n}{m}}(\lam) \paren{ \lam^{r} + \lam  \paren{ \frac{2m}{n\lam} + \sqrt{\frac{m\NN(\lam)}{n\lam}}} 
        } \;.\]
The rest of the proof follows from \eqref{eq:deco3}.      
     
\end{proof}

\begin{proof}[Proof of Theorem \ref{prop:approx_error}]
Let $\lam_n$ defined by \eqref{eq:choicelam:averaged}.  According to Lemma \ref{lem:for_all}, we have $\B_{\frac{n}{m_n}}(\lam_n) \leq 2$ provided 
$\alpha < \frac{2br}{2br +b+1}$. We immediately obtain from the first part of Proposition  \ref{prop:expec_Approx} in the case where $r \leq 1$
\[ \E_{\rho^{\otimes n}}
     \Big[\big\|\bar T^s( \fo - \tilde f_{D}^{\lam_n})\big\|^p_{\h}\Big]^{\frac{1}{p}} 
        \leq   C_{\blacktriangle, p} R \; \lam_n^{s+r} = C_{\blacktriangle, p} \; a_n \;.\]

We turn to the case where $r > 1$. We apply the second part of Proposition  \ref{prop:expec_Approx}. By Corollary \ref{lem:for_all} we have 
\begin{align*}
 \E_{\rho^{\otimes n}}
     \Big[\big\|\bar T^s( \fo - \tilde f_{D}^{\lam_n})\big\|^p_{\h}\Big]^{\frac{1}{p}} &\leq   
C_{p}R \lam_n^s \B^{s+1}_{\frac{n}{m_n}}(\lam_n)\paren{\lam_n^{r} + \lam_n  \paren{ \frac{2m_n}{n\lam_n} + \sqrt{\frac{m_n\NN(\lam_n)}{n\lam_n}}} } \\
     &\leq C_{\blacktriangle, p} \;R \lam_n^{s} \; \paren{\lam_n^{r} + \lam_n  \paren{ \frac{2m_n}{n\lam_n} + \sqrt{\frac{m_n\NN(\lam_n)}{n\lam_n}}} } \;,
\end{align*}      
where we used that $\NN(\lam_n)\leq C_b \lam_n^{-1/b}$ and the definition of $\lam_n$. Observe that 
\[ \frac{2m_n}{n\lam_n} = o\paren{ \sqrt m_n \lam_n^r }\;, \]
provided
\[  m_n\leq n^{\alpha}\;, \qquad \alpha < \frac{2(br+1)}{2br+b+1} \;. \]
Furthermore, for $n$ sufficiently large, $\frac{R}{\sigma}\sqrt{m_n}\lam_n \leq 1$, provided that 
\[ \alpha < \frac{2b}{2br+b+1} \;.\]
As a result, for any $1\leq p$
\[
   \limsup_{n \rightarrow \infty} \sup_{\rho \in \mathcal{M}_{\sigma,M,R}} \frac{   \E_{\rho^{\otimes n}}
     \Big[\big\| \bar T^s( \fo -\tilde f^{\lam_n}_D  )   \big\|^p_{\h}\Big]^{\frac{1}{p}}}{a_{n}} \leq C_{\blacktriangle, p} \,,
\]
for some $C_{\blacktriangle, p} < \infty$, not depending on $\sigma,M,R$. 
\end{proof}


\subsection{{\bf Sample Error Bound}}

The main idea for deriving an upper bound for the sample error is to identify it as a sum of unbiased Hilbert space- valued i.i.d. variables and then to 
apply a suitable version of Rosenthal's inequality.

Given $\lam \in (0,1]$, we define the random variable $\xi_{\lam}:(\X \times \R)^{\frac{n}{m}} \longrightarrow \h$ by 
\[ \xi_{\lam}(\x, \y):= \bar T^s g_{\lam}(\bar T_{\x})(\bar T_{\x}\fo- \bar S^{*}_{ \x }\y )\;.  \]
Recall that according to Assumption \ref{basicmodeleq}, the conditional expectation w.r.t. $\rho$ of $Y$ given $X$ satisfies
\[ \E_\rho[Y | X=x]  = \bar S_x \fo\,, \] 
implying that $\xi_{\lam}$ is unbiased (since $\bar T_{\x} = \bar S^*_{\x}\bar S_{\x}$). Thus, 
\begin{equation}
\label{def:Sm}
\bar T^s( \tilde f^{\lam}_D - \bar f_D^{\lam}) \; = \; \sumup \xi_{\lam}(\x_j, \y_j)  
\end{equation} 
is a sum of centered i.i.d. random variables.

Furthermore, we need the following result from \cite{Pin94}, Theorem 5.2\, , which generalizes  Rosenthal's inequalities  from \cite{Rosen70} (originally only formulated for real valued random variables) to random variables with values in a Banach space. 
For Hilbert spaces this looks particularly nice.

\begin{prop}
\label{prop:pin94}
Let  $\hhh$ be a Hilbert space and $\xi_1, ..., \xi_m$ be a finite sequence of independent, mean zero $\hhh$- valued 
random variables. If $2 \leq p < \infty$, then there exists a constant $C_p>0$, only depending on $p$, such that
\begin{equation}
\label{eq:p-norm}   
\paren{ \E \norm{\; \sumup \xi_j \; }_{\hhh}^p   }^{\frac{1}{p}} \; \leq \; \frac{C_p}{m} \; 
           \max\left\{ \paren{\sum_{j=1}^m \E ||\xi_j||^p_{\hhh}}^{\frac{1}{p}}, \paren{\sum_{j=1}^m \E ||\xi_j||^2_{\hhh}}^{\frac{1}{2}}     \right\}    \;.   
\end{equation} 
\end{prop}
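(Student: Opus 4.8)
The statement is the classical vector-valued Rosenthal inequality in a Hilbert space, and since the factor $1/m$ occurs on both sides it plays no role: writing $S:=\sum_{j=1}^m\xi_j$, it is equivalent to prove $\paren{\E\norm{S}_\hhh^p}^{1/p}\le C_p\max\set{(\sum_{j=1}^m\E\norm{\xi_j}_\hhh^p)^{1/p},\,(\sum_{j=1}^m\E\norm{\xi_j}_\hhh^2)^{1/2}}$ with $C_p$ depending only on $p$, and then divide through by $m$. The plan is a three-step reduction: symmetrization, a Kahane--Khintchine estimate that exploits the Hilbert geometry, and the scalar Rosenthal inequality for sums of independent \emph{nonnegative} random variables.

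First I would symmetrize. Let $(\xi_j')_{j}$ be an independent copy of $(\xi_j)_{j}$ and let $(\eps_j)_{j}$ be i.i.d. Rademacher signs, independent of everything else. Since the $\xi_j$ are independent and centered, Jensen's inequality applied conditionally on $(\xi_j)_j$ gives $\E\norm{S}_\hhh^p\le\E\norm{\sum_j(\xi_j-\xi_j')}_\hhh^p$; the summands $\xi_j-\xi_j'$ are symmetric, so $\sum_j(\xi_j-\xi_j')$ has the same law as $\sum_j\eps_j(\xi_j-\xi_j')$, and the triangle inequality then yields $\norm{S}_{L^p}\le 2\norm{\sum_j\eps_j\xi_j}_{L^p}$.

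Next I would condition on $(\xi_j)_j=(x_j)_j$ and use that in a Hilbert space $\E_\eps\norm{\sum_j\eps_j x_j}_\hhh^2=\sum_j\norm{x_j}_\hhh^2$. By the Kahane--Khintchine inequality (all $L^q$-norms of a Rademacher sum are comparable, with a constant $\kappa_p$ depending only on $p$ and independent of the space) one gets $\E_\eps\norm{\sum_j\eps_j x_j}_\hhh^p\le\kappa_p^p(\sum_j\norm{x_j}_\hhh^2)^{p/2}$; taking expectation over $(\xi_j)_j$ leaves $\E\norm{\sum_j\eps_j\xi_j}_\hhh^p\le\kappa_p^p\,\E(\sum_j\norm{\xi_j}_\hhh^2)^{p/2}$. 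Finally, the variables $Z_j:=\norm{\xi_j}_\hhh^2$ are independent and nonnegative and $q:=p/2\ge1$, so the scalar Rosenthal inequality for nonnegative summands (valid for every $q\ge1$) gives $\E(\sum_j Z_j)^q\le\tilde{C}_q[(\sum_j\E Z_j)^q+\sum_j\E Z_j^q]$, i.e. $\E(\sum_j\norm{\xi_j}_\hhh^2)^{p/2}\le\tilde{C}_{p/2}[(\sum_j\E\norm{\xi_j}_\hhh^2)^{p/2}+\sum_j\E\norm{\xi_j}_\hhh^p]$. Chaining the three bounds and using $(a+b)^{1/p}\le a^{1/p}+b^{1/p}$ gives $(\E\norm{S}_\hhh^p)^{1/p}\le 2\kappa_p\tilde{C}_{p/2}^{1/p}[(\sum_j\E\norm{\xi_j}_\hhh^2)^{1/2}+(\sum_j\E\norm{\xi_j}_\hhh^p)^{1/p}]$, which is at most a $p$-dependent constant times the asserted maximum; dividing by $m$ closes the argument.

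Conceptually the proof is light, so the only real obstacle is the bookkeeping of constants; if one wants a genuinely self-contained argument, the single nontrivial ingredient is the scalar Rosenthal inequality for nonnegative variables, which I would either cite, or --- for $p$ an even integer --- reprove by expanding $\E\inner{S,S}^{p/2}$ and discarding the many mixed moments that vanish by independence and centering, then recover general real $p$ by log-convexity of $p\mapsto\log\E\norm{S}_\hhh^p$ together with interpolation between consecutive even integers. Since the use made of this proposition elsewhere in the paper only requires $C_p$ to be finite and independent of $m$, tracking the sharp growth of $\kappa_p$ and $\tilde{C}_{p/2}$ in $p$ is unnecessary. (Pinelis's own proof in \cite{Pin94} proceeds differently, through martingale and exponential-bound techniques valid in more general Banach spaces, but the symmetrization route above is the most economical one in the Hilbert setting.)
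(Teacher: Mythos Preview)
Your argument is correct. The paper, however, does not prove this proposition at all: it simply quotes it as Theorem~5.2 of \cite{Pin94} (Pinelis' extension of Rosenthal's inequality to $(2,D)$-smooth Banach spaces, specialized to Hilbert space). So there is no ``paper's own proof'' to compare against beyond the citation.

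That said, your route---symmetrization, then Kahane--Khintchine together with the Hilbert identity $\E_\eps\norm{\sum_j\eps_j x_j}^2=\sum_j\norm{x_j}^2$, then the scalar Rosenthal inequality for the nonnegative variables $\norm{\xi_j}^2$---is the standard elementary derivation in the Hilbert setting and is entirely self-contained modulo the scalar result. Pinelis' original proof proceeds instead through martingale difference sequences and exponential tail bounds, which is what lets it cover general $(2,D)$-smooth Banach spaces; your argument trades that generality for brevity by exploiting orthogonality, which is exactly the right trade here since only the Hilbert case is used downstream. The one place to be slightly careful is the scalar Rosenthal inequality for nonnegative summands with exponent $q=p/2\in[1,\infty)$: for $q\ge 1$ this is indeed a known inequality (Johnson--Schechtman--Zinn/Lata{\l}a), so citing it is fine; your alternative sketch via even-integer expansion plus log-convexity also works but is more labor than a citation.
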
  
   
We remark in passing that  \cite{Dirk11}\, , Corollary 1.22\, , contains the interesting result that in addition to the upper bound in  
(\ref{eq:p-norm}) there is  also a 
corresponding lower bound where the constant $C_p$ is replaced by
another constant $C'_p >0$, only depending on $p$.

\begin{prop}[Expectation of Sample Error]
\label{prop:expec_Sample}
Let $\rho$ be a source distribution belonging to $\mathcal{M}_{\sigma,M,R}$, $s \in [0, \frac{1}{2}]$ and let $\lam \in (0,1]$. Define $\B_{\frac{n}{m}}(\lam)$ as in 
\eqref{def:blam}.  
Assume the regularization has qualification $q \geq r+s$. For any $p \geq 1$ one has:
\[ \E_{\rho^{\otimes n}}
     \Big[\big\|\bar T^s( \tilde f^{\lam}_D - \bar f_D^{\lam})  \big\|^p_{\h}\Big]^{\frac{1}{p}} 
        \leq  C_p \; m^{-\frac{1}{2}} 
      \B_{\frac{n}{m}}(\lam)^{\frac{1}{2}+s}\lam^s\;  \left( \frac{mM}{n \lambda} + \sigma\sqrt{\frac{m {{\cal N}(\lam)}}{{n \lam }}}\right) \;, \] 
where $ C_{p}$ does not depend on $(\sigma, M,R)\in \R^3_+$.      
\end{prop}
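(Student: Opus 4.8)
The plan is to exploit the representation \eqref{def:Sm}, which exhibits $\bar T^s(\tilde f^{\lam}_D - \bar f^{\lam}_D)$ as $\frac1m\sum_{j=1}^m\xi_\lam(\x_j,\y_j)$, a normalized sum of $m$ i.i.d.\ centered $\h$-valued random variables, each depending only on a block of size $N:=n/m$. First I would obtain a high-probability bound on $\norm{\xi_\lam(\x,\y)}_\h$ for a single block; then integrate it into $L^p$- and $L^2$-moment bounds for $\xi_\lam$; and finally invoke Rosenthal's inequality (Proposition \ref{prop:pin94}), which is precisely what converts the average of $m$ terms into the variance-reducing factor $m^{-1/2}$. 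For $1\le p<2$ the claim then follows from the case $p=2$ by Jensen's inequality, so one may assume $p\ge2$ throughout.

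For the single-block estimate I would write $\xi_\lam = \bar T^s g_\lam(\bar T_\x)(\bar T_\x\fo - \bar S^*_\x\y)$ and split off powers of $(\bar T_\x+\lam)$:
\[
\norm{\xi_\lam}_\h \le \norm{\bar T^s(\bar T_\x+\lam)^{-s}}\cdot\norm{(\bar T_\x+\lam)^{s+\frac12}g_\lam(\bar T_\x)}\cdot\norm{(\bar T_\x+\lam)^{-\frac12}(\bar T_\x\fo - \bar S^*_\x\y)}_\h.
\]
The first norm is controlled, with probability at least $1-\eta$, by $(8\log^2(2\etainv)\B_N(\lam))^s$ via the Cordes inequality (Proposition \ref{prop:Bath-ineq}) together with Proposition \ref{prop:Guo}, exactly as in \eqref{need}. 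The middle norm is deterministic and, by the spectral theorem, equals $\sup_{0<t\le1}(t+\lam)^{s+\frac12}|g_\lam(t)|$, which I would bound by $C_{\blacktriangle}\lam^{s-\frac12}$ by splitting the supremum at $t=\lam$ and combining properties (i)--(ii) of Definition \ref{regudef} with $s\le\frac12$. The last norm is the genuine noise term: since $\xi_\lam$ is centered conditionally on $\x$, the vector $(\bar T_\x+\lam)^{-\frac12}(\bar T_\x\fo-\bar S^*_\x\y)$ is a normalized sum of $N$ independent centered $\h$-valued variables, each of size $O(\lam^{-1/2})$ times the noise magnitude and with conditional second moments summing to $\frac{\sigma^2}{N}\NN_\x(\lam)$, where $\NN_\x(\lam):=\tr((\bar T_\x+\lam)^{-1}\bar T_\x)$. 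A Bernstein-type inequality for Hilbert-space-valued sums (using the Bernstein moment condition on the noise carried by the parameters $\sigma,M$) then gives, with conditional probability at least $1-\eta$,
\[
\norm{(\bar T_\x+\lam)^{-\frac12}(\bar T_\x\fo-\bar S^*_\x\y)}_\h \le C_{\blacktriangle}\log(2\etainv)\paren{\frac{M}{N\lam^{1/2}} + \sigma\sqrt{\frac{\NN_\x(\lam)}{N}}},
\]
and I would then replace the empirical effective dimension via $\NN_\x(\lam)\le\norm{(\bar T_\x+\lam)^{-1}(\bar T+\lam)}\,\NN(\lam)\le 8\log^2(2\etainv)\B_N(\lam)\NN(\lam)$ from Proposition \ref{prop:Guo}, at the price of one extra factor $\B_N(\lam)^{1/2}$.

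Collecting the three factors on the intersection of the good events, of probability at least $1-C\eta$, yields
\[
\norm{\xi_\lam}_\h \le C_{\blacktriangle}\log^{c}(2\etainv)\,\B_N(\lam)^{s+\frac12}\,\lam^{s}\paren{\frac{M}{N\lam} + \sigma\sqrt{\frac{\NN(\lam)}{N\lam}}}
\]
for a fixed exponent $c$, using $\lam^{s-\frac12}\cdot\frac{M}{N\lam^{1/2}}=\lam^s\cdot\frac{M}{N\lam}$ and the analogous identity for the variance term. Integrating this sub-exponential tail over $\eta\in(0,1]$ (absorbing the $\log$ powers into a $p$-dependent constant) gives $\E_{\rho^{\otimes N}}[\norm{\xi_\lam}_\h^p]^{1/p}\le C_pA_N$ for every $p\ge1$, with $A_N:=\B_N(\lam)^{s+\frac12}\lam^s\big(\frac{M}{N\lam}+\sigma\sqrt{\NN(\lam)/(N\lam)}\big)$, and likewise with $p$ replaced by $2$. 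Then Proposition \ref{prop:pin94} applied to the i.i.d.\ family $\xi_j=\xi_\lam(\x_j,\y_j)$ bounds the $L^p$-norm of $\frac1m\sum_j\xi_j$ by $\frac{C_p}{m}\max\{m^{1/p},m^{1/2}\}\,C_pA_N=m^{-1/2}C_pA_N$; substituting $N=n/m$ turns $\frac{M}{N\lam}$ into $\frac{mM}{n\lam}$ and $\sqrt{\NN(\lam)/(N\lam)}$ into $\sqrt{m\NN(\lam)/(n\lam)}$, which is exactly the asserted estimate. I expect the main obstacle to be the single-block step, specifically the Hilbert-space Bernstein bound producing simultaneously the deviation term $M/(N\lam)$ and the variance term $\sigma\sqrt{\NN(\lam)/(N\lam)}$, together with the careful passage between the empirical operators and effective dimension and their population counterparts; once these are in place, the remainder is bookkeeping with the spectral calculus and Rosenthal's inequality.
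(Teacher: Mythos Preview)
Your global strategy coincides with the paper's: write the sample error as the average \eqref{def:Sm} of $m$ i.i.d.\ centered blocks, get a high-probability bound on a single block $\norm{\xi_\lam(\x,\y)}_{\h}$, integrate to $L^p$-moments, and apply Rosenthal's inequality (Proposition~\ref{prop:pin94}), reducing to $p\ge 2$ by Jensen. The difference lies in how the noise factor is normalized, and there your argument has a gap.

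You normalize by the \emph{empirical} resolvent, obtaining a conditional Bernstein bound with the empirical effective dimension $\NN_{\x}(\lam)=\tr\paren{(\bar T_\x+\lam)^{-1}\bar T_\x}$, and then claim
\[
\NN_{\x}(\lam)\ \le\ \norm{(\bar T_\x+\lam)^{-1}(\bar T+\lam)}\,\NN(\lam).
\]
This inequality is false in general. In a one-dimensional toy model with $\bar T=t$ and $\bar T_\x=t_\x$ on a common eigenvector (and both vanishing elsewhere), one has $\norm{(\bar T_\x+\lam)^{-1}(\bar T+\lam)}=\max\{1,(t+\lam)/(t_\x+\lam)\}$, so for $t_\x>t$ the right-hand side is $\NN(\lam)=t/(t+\lam)$, which is strictly smaller than $\NN_{\x}(\lam)=t_\x/(t_\x+\lam)$. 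What the operator inequality $(\bar T_\x+\lam)^{-1}\le\norm{(\bar T_\x+\lam)^{-1}(\bar T+\lam)}(\bar T+\lam)^{-1}$ actually gives is
\[
\NN_{\x}(\lam)\ \le\ \norm{(\bar T_\x+\lam)^{-1}(\bar T+\lam)}\,\tr\paren{(\bar T+\lam)^{-1}\bar T_\x},
\]
and the trace on the right is still random (it equals $\NN(\lam)$ only in expectation); you would need a separate concentration argument for it.

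The paper sidesteps this entirely by inserting the \emph{population} resolvent before applying Bernstein: it factorizes
\[
(\bar T_\x+\lam)^s g_\lam(\bar T_\x)(\bar T_\x\fo-\bar S_\x^\star\y)
= H^{(1)}_\x\, H^{(2)}_\x\, h^\lam_\z,
\]
with $H^{(1)}_\x=(\bar T_\x+\lam)^s g_\lam(\bar T_\x)(\bar T_\x+\lam)^{1/2}$ (deterministically $\lesssim\lam^{s-1/2}$), $H^{(2)}_\x=(\bar T_\x+\lam)^{-1/2}(\bar T+\lam)^{1/2}$ (bounded by $\B_{n/m}(\lam)^{1/2}$ via Cordes and Proposition~\ref{prop:Guo}), and $h^\lam_\z=(\bar T+\lam)^{-1/2}(\bar T_\x\fo-\bar S_\x^\star\y)$. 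Since the normalization in $h^\lam_\z$ is by the fixed population operator, the unconditional Bernstein bound of Proposition~\ref{Geta1} applies over the full randomness in $(\x,\y)$ and directly yields $\frac{M}{N\sqrt\lam}+\sigma\sqrt{\NN(\lam)/N}$ with the \emph{population} effective dimension---no conversion needed. Adopting this insertion repairs your single-block estimate with no other changes to your outline.
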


\begin{proof}[Proof of Proposition \ref{prop:expec_Sample}]
Let $\lam \in (0,1]$ and $p \geq 2$.  From Proposition \ref{prop:pin94} 
\begin{align}
\label{eq:deco52}
 \E_{\rho^{\otimes n}}
     \left[    \snorm{ \tilde f^{\lam}_D - \bar f_D^{\lam} }^p_{\h}     \right]^{\frac{1}{p}} &= 
   \E_{\rho^{\otimes n}}\left[ \norm{   \frac{1}{m}  \sum_{j=1}^m  \xi_{\lam}(\x_j, \y_j)   }_{\h}^p  \right]^{\frac{1}{p}}   
\end{align}
\[  \leq \frac{C_p}{m} \; 
           \max\left\{ \paren{\sum_{j=1}^m \E_{\rho^{\otimes n}}\Big[ ||\xi_{\lam}(\x_j, \y_j) ||^p_{\h}\Big]}^{\frac{1}{p}}, \paren{\sum_{j=1}^m \E_{\rho^{\otimes n}}\Big[ ||\xi_{\lam}(\x_j, \y_j) ||^2_{\h}\Big]}^{\frac{1}{2}}     \right\}  \;. \]

Again, the estimates in expectation will follow from integration a bound holding with high probability. By \eqref{need}, one has for any $j=1,...,m$
\begin{align}
\label{eq:all} 
 ||\xi_{\lam}(\x_j, \y_j)||_{\h} &=  || \bar T^s g_{\lam}(\bar T_{\x_j})(\bar T_{\x_j}\fo- \bar S^{*}_{ \x_j }\y_j ) ||_{\h} \nonumber \\
  &\leq  8\log^{2s}(4\etainv) \B_{\frac{n}{m}}(\lam)^s \nonumber \\
 & \qquad ||  (\bar T_{\x_j} +\lam)^s  g_{\lam}(\bar T_{\x_j})(\bar T_{\x_j}\fo- \bar S^{*}_{ \x_j }\y_j ) ||_{\h}  \;,
\end{align}
holding with probability at least $1-\frac{\eta}{2}$, where $\B_{\frac{n}{m}}(\lam)$ is defined in \eqref{def:blam}. We proceed by  splitting:   
\[ (\bar T_{\x_j} + \lam)^s g_{\lam}(\bar T_{\x_j})(\bar T_{\x_j}\fo - \bar S_{\x_j}^{\star}\y_j )  =  H_{\x_j}^{(1)}\cdot H_{\x_j}^{(2)} \cdot h^{\lam}_{\z_j} \;, \]
with
\begin{eqnarray*}
H_{\x_j}^{(1)} &:=& (\bar T_{\x_j} + \lam)^{s} g_{\lam}(\bar T_{\x_j})(\bar T_{\x_j} + \lam)^{\frac{1}{2}} ,\\
H_{\x_j}^{(2)} &:=& (\bar T_{\x_j} + \lam)^{-\frac{1}{2}}(\bar T + \lam)^{\frac{1}{2}} , \\
h^{\lam}_{\z_j} &:=& (\bar T + \lam)^{-\frac{1}{2}} (\bar T_{\x_j}\fo - \bar S_{\x_j}^{\star}\y_j ) \;.
\end{eqnarray*}
The first term is estimated using  \eqref{eq:quali} and gives 
\begin{equation}
\label{eq:h1}
  H_{\x_j}^{(1)} \leq  C_{\blacktriangle} \lam^{s-\frac{1}{2}} \;. 
\end{equation}  
The second term is now bounded using \eqref{need} once more. One has with probability at least $1-\frac{\eta}{4}$
\begin{equation}
\label{eq:h2}
 H_{\x_j}^{(2)} \leq 8\log(8\etainv) \B_{\frac{n}{m}}(\lam)^{\frac{1}{2}} \;. 
\end{equation} 
Finally, $h^{\lam}_{\z_j}$ is estimated using Proposition \ref{Geta1}:
\begin{equation}
\label{eq:h3}
 h^{\lam}_{\z_j} \leq 2\log(8\etainv)  \left( \frac{mM}{n\sqrt{\lambda}} + \sigma\sqrt{\frac{m {{\cal N}(\lam)}}{{n}}}\right)\,, 
\end{equation}
holding with probability at least $1-\frac{\eta}{4}$. Thus, combining \eqref{eq:h1}, \eqref{eq:h2} and \eqref{eq:h3} with \eqref{eq:all} gives 
for any $j=1, ...,m$
\begin{small}
\[ ||  \xi_{\lam}(\x_j, \y_j) ||_{\h} \leq  
   C_{\blacktriangle}\log^{2(s+1)}(8\etainv) \B_{\frac{n}{m}}(\lam)^{\frac{1}{2}+s}\lam^s\;  \left( \frac{mM}{n \lambda} + \sigma\sqrt{\frac{m {{\cal N}(\lam)}}{{n \lam }}}\right)\;,\]
\end{small}
with probability at least $1-\eta$. Integration gives for any $p \geq 2$
\[ \sum_{j=1}^p \E_{\rho^{\otimes n}}
     \Big[\big\|   \xi_{\lam}(\x_j, \y_j)  \big\|^p_{\h}\Big] \leq  
     C_{\blacktriangle,p} m \;\A^p \;,\] 
with
\[ \A:= \A_{\frac{n}{m}}(\lam):=  \B_{\frac{n}{m}}(\lam)^{\frac{1}{2}+s}\lam^s\;  \left( \frac{mM}{n \lambda} + \sigma\sqrt{\frac{m {{\cal N}(\lam)}}{{n \lam }}}\right) \;.\]
Combining this with \eqref{eq:deco52} implies, since $p \geq 2$
\begin{align*}
 \E_{\rho^{\otimes n}}
     \Big[\big\|\bar T^s( \tilde f^{\lam}_D - \bar f_D^{\lam})\big\|^p_{\h}\Big]^{\frac{1}{p}} &\leq \frac{C_p}{m}\max\paren{ \paren{m\A^p}^{\frac{1}{p}}, \paren{m\A^2}^\frac{1}{2} } \\
&=  \frac{C_p}{m} \; \A \max\paren{ m^{\frac{1}{p}}, m^{\frac{1}{2}} }  \\
&= \frac{C_p}{\sqrt m} \; \A \;,
\end{align*}      
where $ C_{p}$ does not depend on $(\sigma, M,R)\in \R^3_+$. The result for the case $1\leq p\leq 2$ immediately follows from H\"older's inequality. 
\end{proof}

\begin{proof}[Proof of Theorem \ref{prop:sample_error_exp}]
Let $\lam_n$ defined by \eqref{eq:choicelam:averaged}.  According to Lemma \ref{lem:for_all}  we have $\B_{\frac{n}{m}}(\lam_n) \leq 2$ provided 
$\alpha < \frac{2br}{2br +b+1}$. We immediately obtain from Proposition  \ref{prop:expec_Sample} 
\begin{align*}
 \E_{\rho^{\otimes n}}
     \Big[\big\|\bar T^s( \tilde f^{\lam_n}_D - \bar f_D^{\lam_n})\big\|^p_{\h}\Big]^{\frac{1}{p}} &\leq \frac{C_p}{\sqrt m} \;  
      \lam_n^s\;  \left( \frac{mM}{n \lambda_n} + \sigma\sqrt{\frac{m {{\cal N}(\lam_n)}}{{n \lam_n }}}\right)  \\
  &\leq C_p \lam_n^s\;  \left( \frac{{\sqrt m}M}{n \lambda_n} + \sigma\sqrt{\frac{{{\cal N}(\lam_n)}}{{n \lam_n }}}\right) \;.
\end{align*}      
Again, we use that $\NN(\lam_n)\leq C_b \lam_n^{-1/b}$ and 
\[ \frac{\sqrt m_nM}{n\lam_n} = o\paren{ \sigma\sqrt{\frac{\lam_n^{-1/b}}{n \lam_n}} }\;, \]
provided
\[  m_n\leq n^{\alpha}\;, \qquad \alpha < \frac{2(br+1)}{2br+b+1} \;. \]
Recalling that $\sigma\sqrt{\frac{\lam_n^{-1/b}}{n \lam_n}} = R\lam^r_n = \lam_n^{-s}a_n$, we arrive at
\[ \E_{\rho^{\otimes n}}
     \Big[\big\|\bar T^s( \tilde f^{\lam_n}_D - \bar f_D^{\lam_n})\big\|^p_{\h}\Big]^{\frac{1}{p}} \leq C_p \; a_n\;  .\]
As a result, for any $1\leq p$
\[
   \limsup_{n \rightarrow \infty} \sup_{\rho \in \mathcal{M}_{\sigma,M,R}} \frac{   \E_{\rho^{\otimes n}}
     \Big[\big\| \bar T^s( \tilde f^{\lam_n}_D - \bar f_D^{\lam_n} )   \big\|^p_{\h}\Big]^{\frac{1}{p}}}{a_{n}} \leq C_{ p} \,,
\]
for some $C_{ p} < \infty$, not depending on the model parameter $(\sigma,M,R)\in \R^3_+$. 
\end{proof}


\begin{appendix}
\section{}

\begin{prop}[see e.g. \cite{BlaMuc16}]
\label{Geta2}
For any $n \in \N$, $\lambda \in (0,1]$ and $\eta \in (0,1)$, one has with probability at least $1-\eta $\,:
\begin{equation*}
\norm{(\bar T+ \lam)^{-1}(\bar T- \bar T_{\x}) }_{\hs} \; \leq 
2\log(2\eta^{-1}) \left( \frac{2}{n \lam} + \sqrt{\frac{\cal N(\lam)}{n\lam }}  \right)\; .
\end{equation*}
\end{prop}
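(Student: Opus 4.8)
The plan is to recognize $(\bar T+\lam)^{-1}(\bar T-\bar T_{\x})$ as a normalized sum of independent, mean-zero random variables with values in the separable Hilbert space of Hilbert--Schmidt operators on $\h$, and to apply a Bernstein-type concentration inequality in that space (as used in \cite{BlaMuc16}; see also \cite{optimalratesRLS}). Writing $\bar T_{x_i}:=\kappa^{-2}K_{x_i}\otimes K_{x_i}^{*}$, one has $\bar T_{\x}=\frac1n\sum_{i=1}^n\bar T_{x_i}$ and hence $(\bar T+\lam)^{-1}(\bar T-\bar T_{\x})=\frac1n\sum_{i=1}^n\xi_i$ with $\xi_i:=(\bar T+\lam)^{-1}(\bar T-\bar T_{x_i})$. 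Since $\E[\bar T_{x_i}]=\kappa^{-2}\E[K_X\otimes K_X^{*}]=\bar T$, the $\xi_i$ are i.i.d.\ and centered.

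The next step is to estimate the two quantities entering the Bernstein bound. For the uniform bound, $\norm{\xi_i}_{\hs}\leq\norm{(\bar T+\lam)^{-1}}\norm{\bar T-\bar T_{x_i}}_{\hs}\leq\lam^{-1}\big(\norm{\bar T}_{\hs}+\norm{\bar T_{x_i}}_{\hs}\big)\leq 2\lam^{-1}$, using $\norm{\bar T}_{\hs}\leq\tr(\bar T)\leq 1$ and $\norm{\bar T_{x_i}}_{\hs}=\kappa^{-2}K(x_i,x_i)\leq 1$. For the variance, subtracting the mean can only decrease the second moment, so $\E\norm{\xi_i}_{\hs}^2\leq\E\norm{(\bar T+\lam)^{-1}\bar T_{x_i}}_{\hs}^2$; since $\bar T_{x_i}$ is rank one, $\norm{(\bar T+\lam)^{-1}\bar T_{x_i}}_{\hs}^2=\kappa^{-4}K(x_i,x_i)\,\norm{(\bar T+\lam)^{-1}K_{x_i}}_{\h}^2\leq\kappa^{-2}\norm{(\bar T+\lam)^{-1}K_{x_i}}_{\h}^2$, and taking expectations (using $\E[K_X\otimes K_X^{*}]=\kappa^2\bar T$) gives $\E\norm{\xi_i}_{\hs}^2\leq\tr\big((\bar T+\lam)^{-2}\bar T\big)=\sum_j\mu_j(\mu_j+\lam)^{-2}\leq\lam^{-1}\NN(\lam)$.

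Finally I would invoke the Bernstein inequality for i.i.d.\ centered random variables with values in a separable Hilbert space with uniform bound $B=2/\lam$ and variance proxy $V=\NN(\lam)/\lam$ (the a.s.\ bound $\norm{\xi_i}_{\hs}\leq B$ trivially yields the moment condition $\E\norm{\xi_i}_{\hs}^m\leq\tfrac12 m!\,V B^{m-2}$ for all $m\geq 2$), which gives, with probability at least $1-\eta$,
\[
\Big\| n^{-1}\sum_{i=1}^n\xi_i\Big\|_{\hs}\;\leq\;2\log(2\etainv)\left(\frac{B}{n}+\sqrt{\frac{V}{n}}\right)\;=\;2\log(2\etainv)\left(\frac{2}{n\lam}+\sqrt{\frac{\NN(\lam)}{n\lam}}\right),
\]
exactly the asserted inequality. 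I expect the only genuinely delicate step to be the variance computation: one must carefully convert the Hilbert--Schmidt norm of the rank-one operator into a trace, via $\norm{u\otimes v^{*}}_{\hs}=\norm{u}\norm{v}$ and the first-moment identity $\E[K_X\otimes K_X^{*}]=\kappa^2\bar T$, and then bound $\tr((\bar T+\lam)^{-2}\bar T)$ by $\lam^{-1}\NN(\lam)$; the remaining steps are routine bookkeeping, and note that, consistently with the statement, no eigenvalue-decay assumption on $\nux$ is needed.
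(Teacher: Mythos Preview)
The paper does not supply its own proof of this proposition; it simply states it in the appendix with a reference to \cite{BlaMuc16}. Your argument is correct and is precisely the standard proof found in that reference (and in \cite{optimalratesRLS}): write the operator as an average of i.i.d.\ centered Hilbert--Schmidt--valued random variables, establish the almost-sure bound $2/\lam$ and the variance bound $\NN(\lam)/\lam$, and apply the Hilbert-space Bernstein inequality. All steps are clean, including the variance computation via the rank-one identity and the trace bound $\tr((\bar T+\lam)^{-2}\bar T)\leq \lam^{-1}\NN(\lam)$.
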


\begin{prop}[see e.g. \cite{BlaMuc16}]
\label{Geta1}  
For $n \in \N$, $\lambda \in (0,1]$ and  $\eta \in (0,1]$, it holds with probability at least $1-\eta$\,:
\begin{equation*}
\big\| (\bar B +  \lam)^{-\frac{1}{2}}\;\left(\bar B_{\x}f_{\rho} - \bar S_{\x}^{\star}\y \right)\big\|_{\h}\;  \leq \; 
2\log(2\eta^{-1})  \left( \frac{M}{n\sqrt{\lam}} + \sqrt{\frac{\sigma^2 {\cal N}(\lam)}{ n}} \right)\;.
\end{equation*}
\end{prop}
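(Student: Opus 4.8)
\noindent\textbf{Sketch of proof of Proposition~\ref{Geta1}.}
(Here $\bar B=\bar T$ and $\bar B_{\x}=\bar T_{\x}$.) The plan is to recognise $\bar T_{\x}\fo-\bar S^{\star}_{\x}\y$ as the empirical average of $n$ i.i.d.\ mean-zero $\h$-valued random variables, and then to apply a Bernstein-type concentration inequality in Hilbert space, exactly as in \cite{optimalratesRLS} and \cite{BlaMuc16}.

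First I would write $\bar T_{\x}\fo-\bar S^{\star}_{\x}\y=\frac1n\sum_{i=1}^n\zeta_i$, where the $i$-th summand factorises (up to the $\kappa$-normalisation, which is absorbed into the constants) as $\zeta_i=\eps_i\,v_{X_i}$ with $\eps_i$ the noise of the $i$-th observation and $v_x\in\h$ the corresponding kernel section. By \eqref{basicmodeleq} one has $\E[\eps_i\mid X_i]=0$ and by \eqref{bernstein} $\E[\eps_i^2\mid X_i]\le\sigma^2$, while $\norm{v_x}_{\h}\le 1$ and $\E[v_X\otimes v_X^{*}]=\bar T$. Since the observations are i.i.d., so are the $\zeta_i$, and $\E[\zeta_i]=0$. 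Putting $\xi_i:=(\bar T+\lam)^{-\frac12}\zeta_i$, the quantity to be bounded equals $\norm{\frac1n\sum_{i=1}^n\xi_i}_{\h}$, a normalised sum of i.i.d.\ centred $\h$-valued vectors.

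Next I would control the moments of $\norm{\xi_i}_{\h}$. Because $\norm{v_x}_{\h}\le1$ we get the deterministic estimate $\norm{(\bar T+\lam)^{-\frac12}v_x}_{\h}^{2}=\inner{(\bar T+\lam)^{-1}v_x,v_x}_{\h}\le\lam^{-1}$, and integrating in $x$ gives $\E\brac{\norm{(\bar T+\lam)^{-\frac12}v_X}_{\h}^{2}}=\tr\paren{(\bar T+\lam)^{-1}\bar T}=\NN(\lam)$. Combining this with the Bernstein moment condition $\E[\abs{\eps}^{\ell}\mid X]\le\tfrac12\,\ell!\,\sigma^{2}M^{\ell-2}$ (whose case $\ell=2$ is \eqref{bernstein}), conditioning first on $X_i$ and then integrating, and using $\norm{(\bar T+\lam)^{-\frac12}v_x}_{\h}^{\ell}\le\lam^{-(\ell-2)/2}\norm{(\bar T+\lam)^{-\frac12}v_x}_{\h}^{2}$, one obtains for every integer $\ell\ge2$
\[
\E\brac{\norm{\xi_i}_{\h}^{\ell}}\;\le\;\tfrac12\,\ell!\,\paren{\sigma^{2}\NN(\lam)}\paren{M\lam^{-\frac12}}^{\ell-2}.
\]
Hence the variance proxy is $\Sigma^{2}=\sigma^{2}\NN(\lam)$ and the exponential-tail scale is $B=M\lam^{-1/2}$.

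Finally I would feed $\Sigma^{2}$ and $B$ into a Bernstein inequality for Hilbert-space-valued random variables (of Pinelis--Sakhanenko type; see e.g.\ \cite{optimalratesRLS}), obtaining that with probability at least $1-\eta$
\[
\norm{\frac1n\sum_{i=1}^n\xi_i}_{\h}\;\le\;2\log(2\etainv)\paren{\frac{B}{n}+\sqrt{\frac{\Sigma^{2}}{n}}}\;=\;2\log(2\etainv)\paren{\frac{M}{n\sqrt{\lam}}+\sqrt{\frac{\sigma^{2}\NN(\lam)}{n}}},
\]
which is the claim. The one genuinely delicate point is the moment estimate of the third step: one has to keep the $\kappa$-normalisations straight and, above all, arrange that only the factor $\lam^{-1/2}$ (which is \emph{bounded} for $\lam$ bounded away from $0$, but large for small $\lam$) is raised to the power $\ell-2$, so that the ``deterministic'' term in the final bound is of order $n^{-1}\lam^{-1/2}$ rather than $n^{-1/2}$. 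Everything else is routine bookkeeping, and since Proposition~\ref{Geta1} is quoted from \cite{BlaMuc16} the argument is entirely standard.
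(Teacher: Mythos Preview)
Your sketch is correct and follows exactly the standard route: rewrite the quantity as an average of i.i.d.\ centred $\h$-valued variables, establish the Bernstein moment bound with variance proxy $\sigma^{2}\NN(\lam)$ and scale $M\lam^{-1/2}$, and invoke a Pinelis--Sakhanenko/Bernstein inequality. The paper itself does not give a proof of this proposition; it is stated in the appendix without argument and attributed to \cite{BlaMuc16} (and ultimately \cite{optimalratesRLS}), whose proof is precisely the one you outline.

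One small remark: you correctly invoke the full Bernstein moment condition $\E[\abs{\eps}^{\ell}\mid X]\le\tfrac12\,\ell!\,\sigma^{2}M^{\ell-2}$, which is what is actually needed here and what the parameter $M$ in the model class $\M_{\sigma,M,R}$ refers to, even though the body of the paper only writes out the case $\ell=2$ in \eqref{bernstein}. This is not a gap in your argument but rather an omission in the paper's presentation of its standing assumptions.
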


\begin{prop}[Cordes Inequality,\cite{Bat97}, Theorem IX.2.1-2]
\label{prop:Bath-ineq}
Let $A,B$ be to self-adjoint, positive operators on a Hilbert space. Then for any $s\in[0,1]$:
\begin{equation}
\label{eq:multpert}
\norm{A^sB^s} \leq \norm{AB}^s\,.
\end{equation}
\end{prop}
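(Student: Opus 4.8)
The plan is to prove $\norm{A^sB^s}\le\norm{AB}^s$ by complex interpolation, via the operator-valued Hadamard three-lines theorem, after first reducing to strictly positive (invertible) operators. First I would perform this reduction: replacing $A$ by $A+\eps I$ and $B$ by $B+\eps I$ for $\eps>0$, it suffices to prove the bound for strictly positive bounded operators and then let $\eps\downarrow 0$. The limiting step is routine but must be justified: since $t\mapsto t^s$ is continuous on the (compact) spectrum, the continuous functional calculus gives $(A+\eps I)^s\to A^s$ and $(B+\eps I)^s\to B^s$ in operator norm, whence $\norm{(A+\eps I)^s(B+\eps I)^s}\to\norm{A^sB^s}$, while trivially $\norm{(A+\eps I)(B+\eps I)}^s\to\norm{AB}^s$.

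Assuming now that $A,B$ are strictly positive and bounded, I would introduce $\log A$ and $\log B$ as bounded self-adjoint operators and define the operator-valued function $F(z):=A^zB^z:=e^{z\log A}\,e^{z\log B}$ on the closed strip $\{z:0\le\mathrm{Re}\,z\le 1\}$. This $F$ is analytic in the interior and bounded and continuous on the closed strip. The two boundary estimates are the heart of the matter. On the line $\mathrm{Re}\,z=0$, writing $z=iy$, the operators $A^{iy}=e^{iy\log A}$ and $B^{iy}$ are unitary (because $iy\log A$ is skew-adjoint), so $\norm{F(iy)}\le\norm{A^{iy}}\,\norm{B^{iy}}=1$. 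On the line $\mathrm{Re}\,z=1$, writing $z=1+iy$ and using that powers of a single operator commute, one has $A^{1+iy}=A^{iy}A$ and $B^{1+iy}=BB^{iy}$, so $F(1+iy)=A^{iy}(AB)B^{iy}$ and therefore $\norm{F(1+iy)}\le\norm{AB}$.

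With these boundary bounds in hand, I would invoke the three-lines theorem. Since $z\mapsto\log\norm{F(z)}$ is subharmonic on the strip (equivalently, one applies the scalar Hadamard three-lines theorem to $z\mapsto\inner{F(z)u,v}$ over unit vectors $u,v$ and then takes the supremum), the boundary values $1$ and $\norm{AB}$ interpolate to $\norm{F(s)}\le 1^{1-s}\norm{AB}^{s}=\norm{AB}^{s}$ for every $s\in[0,1]$. Since $F(s)=A^sB^s$, this is exactly the desired inequality, and combined with the $\eps\downarrow 0$ reduction it completes the proof for all positive $A,B$.

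The hard part will be the rigorous justification of the operator-valued three-lines theorem, namely the subharmonicity (or reduction-to-scalars) step together with the verification that $F$ is genuinely bounded and continuous on the \emph{closed} strip so that the maximum principle applies. The unitarity of the imaginary powers $A^{iy},B^{iy}$ and the commutation identity $A^{1+iy}=A^{iy}A$ are the remaining points that need care, but these are comparatively routine consequences of the spectral theorem for the bounded self-adjoint operators $\log A$ and $\log B$.
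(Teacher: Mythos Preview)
The paper does not supply its own proof of this proposition; it is simply quoted from \cite{Bat97}, Theorem IX.2.1--2, and used as a black box in the analysis. So there is nothing in the paper to compare your argument against.

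That said, your proposed proof is correct and is in fact one of the standard routes to the Cordes inequality. The reduction to strictly positive $A,B$ via $A+\eps I$, $B+\eps I$ is valid (continuity of $t\mapsto t^s$ on compact spectra gives norm convergence of the powers), and once $\log A,\log B$ are bounded self-adjoint, the map $F(z)=e^{z\log A}e^{z\log B}$ is entire and bounded on the closed strip $0\le\mathrm{Re}\,z\le 1$. Your boundary estimates are right: $A^{iy},B^{iy}$ are unitary, and the factorization $F(1+iy)=A^{iy}(AB)B^{iy}$ gives $\norm{F(1+iy)}\le\norm{AB}$. The operator-valued three-lines theorem (or the scalar version applied to $\langle F(z)u,v\rangle$ followed by a supremum over unit $u,v$) then yields $\norm{A^sB^s}=\norm{F(s)}\le\norm{AB}^s$. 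The points you flag as needing care --- boundedness and continuity of $F$ on the closed strip, and the commutation $A^{1+iy}=A^{iy}A$ --- are indeed routine consequences of the functional calculus for bounded self-adjoint operators, so there is no gap.
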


\end{appendix}


\bibliography{bibliography}
\bibliographystyle{abbrv}

\checknbdrafts

\end{document}